\definecolor{bl}{rgb}{0.0,0.2,0.6}
\font
\newcommand{\mathd}{\mathrm{d}}
\newcommand{\tmaffiliation}[1]{\\ #1}
\newcommand{\tmem}[1]{{\em #1\/}}
\newcommand{\tmmathbf}[1]{\ensuremath{\boldsymbol{#1}}}
\newcommand{\tmname}[1]{\textsc{#1}}
\newcommand{\tmop}[1]{\ensuremath{\operatorname{#1}}}
\newcommand{\tmsep}{, }
\newcommand{\tmstrong}[1]{\textbf{#1}}
\newcommand{\tmtextbf}[1]{{\bfseries{#1}}}
\newcommand{\tmtextit}[1]{{\itshape{#1}}}
\newcommand{\tmtextsc}[1]{{\scshape{#1}}}
\newcommand{\tmtextup}[1]{{\upshape{#1}}}
\newenvironment{enumeratenumeric}{\begin{enumerate}[1.] }{\end{enumerate}}
\newenvironment{itemizedot}{\begin{itemize} }{\end{itemize}}
\newenvironment{proof}{\noindent\textbf{Proof\ }}{\hspace*{\fill}$\Box$\medskip}
\newtheorem{definition}{Definition}
\newtheorem{lemma}{Lemma}
\newtheorem{proposition}{Proposition}
{\theorembodyfont{\rmfamily}\newtheorem{remark}{Remark}}
\newtheorem{theorem}{Theorem}
\newcommand{\tmkeywords}{\textbf{Keywords:} }
\newcommand{\tmmsc}{\textbf{A.M.S. subject classification:} }
\newcommand{\T}{\mathsf{T}}
\newcommand{\SStwo}{\mathbf{S}^2}
\newcommand{\SSn}{\mathbf{S}^{n - 1}}
\newcommand{\eq}{=}
\newcommand{\eqs}{\; = \;}
\newcommand{\mybold}[1]{\tmtextbf{\tmtextit{#1}}}
\newcommand{\twos}{\xrightarrow{2 s}}
\newcommand{\curl}{\tmmathbf{\tmop{curl}}\hspace{1pt}}
\newcommand{\curlx}{\tmmathbf{\tmop{curl}}_{\hspace{0.5pt} x}}
\newcommand{\divv}{\tmop{div}}
\newcommand{\divx}{\tmop{div}_{\hspace{-1pt} x}}
\newcommand{\divy}{\tmop{div}_{\hspace{-1pt} y}}
\newcommand{\grady}{\nabla_{\hspace{-1pt} y}}
\newcommand{\gradx}{\nabla_{\hspace{-1pt} x}}
\newcommand{\RR}{\mathbb{R}}
\newcommand{\NN}{\mathbb{N}}
\newcommand{\YY}{Y}
\newcommand{\YYe}{Y_{\varepsilon}}
\newcommand{\lo}[1]{\text{\tmtextbf{\tmtextup{#1}}}}
\def\printtitle{
    {\color{bl} \centering \Large \sc \textbf{\@title}\par}}		
\begin{document}

\title{
  \color{bl}\textbf{Cell averaging two-scale convergence}\\
  \large\textbf{Applications to periodic homogenization}
}

\author{
 \textsc{Fran{\c c}ois Alouges}
  \tmaffiliation{\normalsize CMAP, {\'E}cole Polytechnique,\\
  \normalsize route de Saclay,\\
  \normalsize 91128 Palaiseau Cedex,\\
  \normalsize France}
  \and
 \textsc{ Giovanni Di Fratta}
  \tmaffiliation{\normalsize CMAP, {\'E}cole Polytechnique,\\
  \normalsize route de Saclay,\\
  \normalsize 91128 Palaiseau Cedex,\\
  \normalsize France}
}

\maketitle

\begin{abstract}
  The aim of the paper is to introduce an alternative
  notion of two-scale convergence which gives a more natural modeling approach
  to the homogenization of partial differential equations with periodically
  oscillating coefficients: while removing the bother of the admissibility of
  test functions, it nevertheless simplifies the proof of all the standard
  compactness results which made classical two-scale convergence very worthy
  of interest: bounded sequences in $L^2_{\sharp} [ \YY, L^2 (\Omega)
  ]$ and $L^2_{\sharp} [ \YY, H^1 (\Omega) ]$ are proven to
  be relatively compact with respect to this new type of convergence. The
  strengths of the notion are highlighted on the classical homogenization
  problem of linear second-order elliptic equations for which first order
  boundary corrector-type results are also established. Eventually, possible
  weaknesses of the method are pointed out on a nonlinear problem: the weak
  two-scale compactness result for $\SStwo$-valued stationary harmonic maps.
\end{abstract}

\tmmsc{35B27}{\tmsep}{35B40}{\tmsep}{74Q05}

\tmkeywords{periodic homogenization}{\tmsep}{two-scale
convergence}{\tmsep}{boundary layers}{\tmsep}{cell
averaging}{\tmsep}{multiscale problems}

\section{Introduction and Motivations}\label{sec:intro}

The aim of the paper is to study a new notion of two-scale
convergence\footnote{A deep bibliographic research, shows that the idea here
presented is suggested in an paper (of the late seventies and so well before
the introduction of the notion of two-scale convergence) by
{\tmname{Papanicolau}} and {\tmname{Varadhan}} {\cite{papanicolaou97boundary}}
in the context of stochastic homogenization.} which is very natural and, in
our opinion, gives a more straightforward approach to the homogenization
process: while removing the bother of the admissibility of test functions
{\cite{Allaire1992,lukkassen2002two}}, it nevertheless simplifies the proof of
all standard compactness results which made {\tmem{classical}} two-scale
convergence (introduced in {\cite{Nguetseng1989,Allaire1992}}) very worthy of
interest.

Attempts to overcome the question of admissibility of test functions arising
in the definition of two-scale convergence have been the subject of various
authors
{\cite{cioranescu2002periodic,nechvatal2004alternative,valadier1997admissible}}.
Among them, the periodic unfolding method is considered one of the most
successful. The idea, as well as its nomenclature, is introduced
{\cite{cioranescu2002periodic}} where the authors exploit a natural, although
purely mathematical, intuition to recover two-scale convergence as a classical
functional weak convergence in a suitable larger space. This recovery process
is achieved by introducing the so-called unfolding operator which, roughly
speaking, turns a sequence of $1$-scale functions into a sequence of $2$-scale
functions.

On the other hand, as it is simple to show by playing with Lebesgue
differentiation theorem, the recovery process is not univocal, and many
alternatives are possible. In guessing the one presented below, we did not
rely on mathematical intuition only, but we found inspiration from the physics
of the homogenization process. That is why we think it is important to dwell
on some preliminary considerations before giving definitions, theorems and
proofs.

The paper is organized as follows: in Section \ref{sec:intro.newapproach} we
explain the idea behind the proposed approach which will be formalized in
Section \ref{sec:2NA}. In Section \ref{sec:3Compact} we establish compactness
results for the new notion of two-scale convergence which play a central role
in the homogenization process. In Section \ref{sec:4} we test the
effectiveness of our notion of convergence on the
{\guillemotleft}classical{\guillemotright} model problem in the theory of
homogenization, i.e the one associate to a family of linear second-order
elliptic partial differential equation with periodically oscillating
coefficients. Section \ref{sec:correctorH1conv} is devoted to the so-called
first-order corrector results which aim to improve the convergence of the
solution gradients by adding corrector term. In Section \ref{sec:6bl} we
introduce the well-known boundary layer terms which aim to compensate the fast
oscillation of the family of solutions near the boundary. Eventually, in
Section \ref{sec:homharmonicmaps} we test the approach on a nonlinear problem:
we prove a weak two-scale compactness result for $\SStwo$-valued stationary
harmonic map, and make some remarks which point out some possible weaknesses
of this alternative notion of two-scale convergence.

\section{The cell averaging approach to periodic
homogenization}\label{sec:intro.newapproach}

\subsection{The classical two-scale convergence approach to periodic
homogenization}\label{subsec:sec1classicaltwoscale}

Let us focus on the classical model problem in homogenization: a linear
second-order partial differential equation with periodically oscillating
coefficients. Such an equations models, for example, the stationary heat
conduction in a periodic composite medium {\cite{Allaire1992,Donato1999}}. We
denote by $\Omega$ the material domain (a bounded open set in $\RR^N$) and by
$\YY \assign [0, 1]^N$ the unit cell of $\RR^N$. Denoting by $f \in L^2
(\Omega)$ the source term and enforcing a Dirichlet boundary condition for the
unknown $u_{\varepsilon}$, the model equation reads as
\begin{equation}
  - \tmop{div} (A_{\varepsilon} \nabla u_{\varepsilon}) = f \quad \text{in }
  \Omega, \quad u_{\varepsilon} = 0 \quad \text{on } \partial \Omega,
  \label{eq:model-problem}
\end{equation}
where, for any $\varepsilon > 0$, we have defined $A_{\varepsilon}$ by
$A_{\varepsilon} (x) \assign A (x / \varepsilon)$, with $A$ (the so-called
{\tmem{matrix of diffusion coefficients}}) an $L^{\infty}$ and $\YY$-periodic
matrix valued function, which is uniformly coercive, i.e. such that for two
positive constants $0 < \alpha \leqslant \beta$ one has (for a.e. $y \in \YY$)
$\alpha | \xi |^2 \leqslant A (y) \xi \cdot \xi \leqslant \beta | \xi |^2$ for
every $\xi \in \RR^N$. Here we have supposed $A$ depending on the periodic
variable only although later we will work with the more general case in which
$A$ depends on the $x$ variable too. The weak formulation of problem
(\ref{eq:model-problem}) reads as:
\begin{equation}
  \int_{\Omega} A_{\varepsilon} \nabla u_{\varepsilon} \cdot \nabla \varphi =
  \int_{\Omega} f \varphi, \label{eq:model-problemwf}
\end{equation}
and according to Lax-Milgram theorem for each $\varepsilon > 0$ there exists a
unique weak solution $u_{\varepsilon} \in H_0^1 (\Omega)$ of
(\ref{eq:model-problemwf}). The family of solutions
$(u_{\varepsilon})_{_{\varepsilon \in \RR^+}}$ and the family of
{\tmem{fluxes}} $(\xi_{\varepsilon})_{\varepsilon \in \RR^+} \assign
(A_{\varepsilon} \nabla u_{\varepsilon})_{\varepsilon \in \RR^+}$, constitute
bounded subsets respectively of $H_0^1 (\Omega)$ and $L^2 (\Omega)$. Thus
there exist subfamilies (that we still denote by
$(u_{\varepsilon})_{\varepsilon \in \RR^+}$ and
$(\xi_{\varepsilon})_{\varepsilon \in \RR^+}$) and elements $u_0 \in H_0^1
(\Omega), \xi_0 \in L^2 (\Omega)$ such that $\nabla u_{\varepsilon}
\rightharpoonup \nabla u_0$ and $\xi_{\varepsilon} \rightharpoonup \xi_0$
weakly in $L^2 (\Omega)$. Hence, passing to the limit in
(\ref{eq:model-problemwf}), we get $(\xi_0, \nabla \varphi)_{L^2 (\Omega)} =
(f, \varphi)_{L^2 (\Omega)}$, where the limit flux $\xi_0$ is the weak limit
of the product of the weakly convergent sequences $\nabla u_{\varepsilon}
\rightharpoonup \nabla u_0$ and $A_{\varepsilon} \rightharpoonup \langle A
\rangle_{\YY}$. The identification of the limit flux $\xi_0$ in terms of $u_0$
and $A$ is the first aim in the mathematical theory of periodic
homogenization.

A procedure for the homogenization of problem (\ref{eq:model-problem})
appeared in 1989 by the means of the so-called two-scale convergence. This
notion, introduced for the first time by {\tmname{Nguetseng}} in
{\cite{Nguetseng1989}}, was later named {\guillemotleft}two-scale
convergence{\guillemotright} by {\tmname{Allaire}} {\cite{Allaire1992}} who
further developed the notion by giving more direct proofs of the main
compactness results. To better understand the idea behind the classical
two-scale approach, let us recall the following compactness results
{\cite{Allaire1992}}, from which the notion of two-scale convergence
originates:

\begin{proposition}[Nguetseng {\cite{Nguetseng1989}}, Allaire
{\cite{Allaire1992}}]
  If $(u_{\varepsilon})_{\varepsilon \in \RR^+}$ is a bounded sequence in $L^2
  (\Omega)$, there exists $u_0 \in L^2 \left( \Omega \times \YY \right)$, such
  that, up to a subsequence
  \begin{equation}
    \lim_{\varepsilon \rightarrow 0} \int_{\Omega} u_{\varepsilon} (x) \varphi
    (x, x / \varepsilon) \mathd x = \int_{\Omega \times \YY} u_0 (x, y)
    \varphi (x, y) \mathd x \; \mathd y
  \end{equation}
  for any test function\footnote{As it is classical in the field, we index by
  $\sharp$ spaces that consist of periodic functions.} $\varphi \in
  \mathcal{D} [ \Omega, C^{\infty}_{\sharp} \left( \YY \right) ]$.
  Moreover, if $(u_{\varepsilon})_{\varepsilon \in \RR^+}$ is a bounded
  sequence in $H^1 (\Omega)$, then there exist functions $u_0 \in H^1
  (\Omega)$ and $u_1 \in L^2 [ \Omega, H^1_{\sharp} \left( \YY \right) /
  \RR ]$ such that, up to a subsequence
  \begin{equation}
    \lim_{\varepsilon \rightarrow 0} \int_{\Omega} \nabla u_{\varepsilon} (x)
    \cdot \psi (x, x / \varepsilon) \mathd x = \int_{\Omega \times \YY} \left(
    \gradx u_0 (x) + \grady u_1 (x, y) \right) \cdot \psi (x, y) \mathd x \;
    \mathd y
  \end{equation}
  for any test function $\psi \in \mathcal{D} [ \Omega,
  C^{\infty}_{\sharp} \left( \YY \right) ]^N$.
\end{proposition}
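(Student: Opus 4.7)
The plan is to follow the classical dual-space approach of Nguetseng and Allaire: interpret the two-scale limit as a Riesz representative of a weak-$\ast$ limit of bounded linear functionals on $L^2(\Omega \times \YY)$. For the first statement, introduce the family of linear functionals
\[
T_\varepsilon : \varphi \mapsto \int_\Omega u_\varepsilon(x)\, \varphi(x, x/\varepsilon)\, \mathd x
\]
on the dense subspace $\mathcal{D}[\Omega, C^\infty_\sharp(\YY)]$ of $L^2(\Omega \times \YY)$. By Cauchy--Schwarz together with the classical mean-value property for admissible periodic test functions, namely $\|\varphi(\cdot, \cdot/\varepsilon)\|_{L^2(\Omega)} \to \|\varphi\|_{L^2(\Omega \times \YY)}$, the operator norms $\|T_\varepsilon\|$ are bounded uniformly in $\varepsilon$. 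Extracting a weak-$\ast$ convergent subsequence and invoking the Riesz representation theorem on $L^2(\Omega \times \YY)$ then produce a limit $u_0 \in L^2(\Omega \times \YY)$ realizing the stated identity, first on the dense subspace and then on all of $L^2(\Omega \times \YY)$.

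For the second statement, apply the first part componentwise to the bounded sequence $(\nabla u_\varepsilon)$ in $L^2(\Omega)^N$ to produce $\xi_0 \in L^2(\Omega \times \YY)^N$ as its two-scale limit along a subsequence. Since $(u_\varepsilon)$ is bounded in $H^1(\Omega)$, we may further assume $u_\varepsilon \rightharpoonup u_0$ weakly in $H^1(\Omega)$ and strongly in $L^2(\Omega)$ by Rellich--Kondrachov; testing with $y$-independent $\psi(x) \in \mathcal{D}(\Omega)^N$ gives the averaging identity $\int_\YY \xi_0(x, y)\, \mathd y = \nabla u_0(x)$. To extract the $y$-dependent part, the key computation is: for $\psi \in \mathcal{D}[\Omega, C^\infty_\sharp(\YY)^N]$ with $\divy \psi \equiv 0$,
\[
\int_\Omega \nabla u_\varepsilon \cdot \psi(x, x/\varepsilon)\, \mathd x = -\int_\Omega u_\varepsilon\, \bigl[\divx \psi(x, x/\varepsilon) + \tfrac{1}{\varepsilon} \divy \psi(x, x/\varepsilon)\bigr]\, \mathd x = -\int_\Omega u_\varepsilon\, \divx \psi(x, x/\varepsilon)\, \mathd x,
\]
since the $1/\varepsilon$ contribution vanishes by hypothesis. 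Passing to the limit --- two-scale on the left, strong $L^2$ on the right --- and integrating by parts in $x$ yields the orthogonality relation $\int_{\Omega \times \YY} (\xi_0(x, y) - \nabla u_0(x)) \cdot \psi(x, y)\, \mathd x\, \mathd y = 0$ for every such $\psi$.

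The main obstacle is the final step: deducing from this orthogonality that $\xi_0 - \nabla u_0 = \grady u_1$ for some $u_1 \in L^2[\Omega, H^1_\sharp(\YY)/\RR]$. This rests on a parametrized de Rham / Helmholtz decomposition on the flat torus $\YY$ --- the $L^2_\sharp(\YY)^N$-orthogonal complement of the closed subspace of zero-mean, $y$-divergence-free fields is exactly the subspace of $y$-gradients --- together with a measurable selection argument assembling the pointwise-in-$x$ potentials into a single function $u_1(x, \cdot)$ with the correct Bochner regularity in $x$. Everything else is routine bookkeeping: extracting a common subsequence for $u_\varepsilon$ and $\nabla u_\varepsilon$, and matching averages through appropriate specializations of the test functions.
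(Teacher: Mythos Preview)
The paper does not prove this proposition: it is stated as background, attributed to Nguetseng and Allaire, and cited without proof. Your sketch is correct and is precisely the classical Nguetseng--Allaire argument to which the paper refers, so in that sense there is no ``paper's own proof'' to compare against.

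What is worth pointing out is that the paper's actual contribution is the analogous pair of results in its new cell-averaging framework: Theorem~\ref{Thm:compL2} for $L^2$ compactness and Proposition~\ref{prop:g1classicalctwoscaleL2H1} for the $H^1$ gradient structure. There the $L^2$ compactness collapses to a one-liner --- the cell-shift operator $\mathcal{F}_\varepsilon$ is an isometry of $L^2_\sharp[\YY, L^2(\Omega)]$, so boundedness of $(\mathcal{F}_\varepsilon u_\varepsilon)$ yields weak sequential compactness directly, with no admissibility lemma, no mean-value property, and no dual-space/Riesz detour. The paper's $H^1$ argument is structurally parallel to yours (integrate by parts, isolate and kill the $1/\varepsilon$ term, then invoke de~Rham on the torus to recognize the remainder as a $y$-gradient), but is carried out on the genuine two-variable objects $\mathcal{F}_\varepsilon(u_\varepsilon)$ rather than on traces $\varphi(x, x/\varepsilon)$. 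That is exactly the simplification the paper is advertising over the classical approach you reproduced.
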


\begin{figure}[t]
  \includegraphics{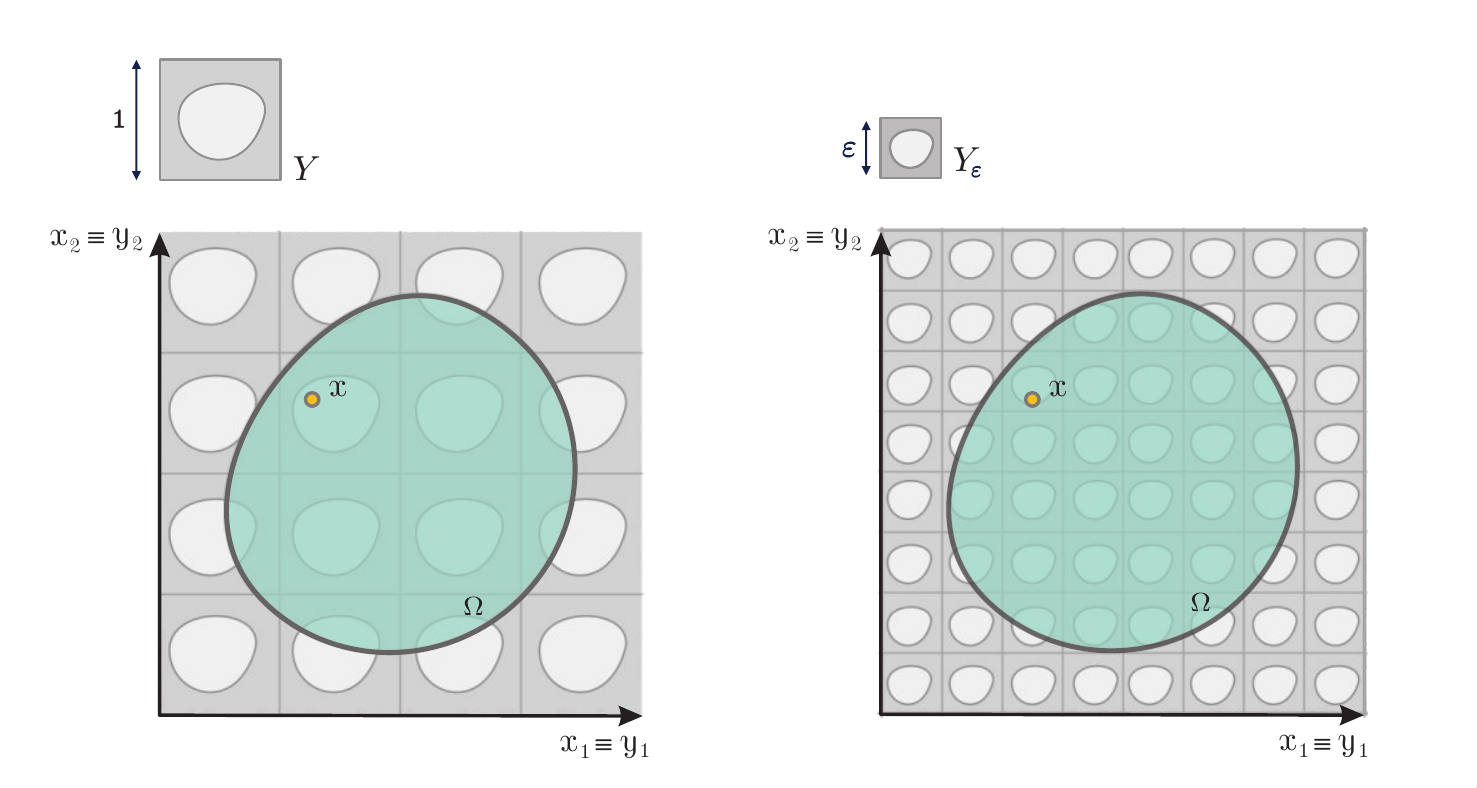}
  \caption{\label{fig:perhom1}If we assume that the heterogeneities are evenly
  distributed inside the media $\Omega$, we can model the material as
  periodic. As illustrated in the figure, this means that we can think of the
  material as being immersed in a grid of small identical cubes $\YYe$, the
  side-length of which is $\varepsilon$.}
\end{figure}

It is then natural to give the following (see {\cite{Allaire1992}})

\begin{definition}[Allaire {\cite{Allaire1992}}]
  A sequence of functions $u_{\varepsilon}$ in $L^2 (\Omega)$
  {\tmem{two-scale}} converges to a limit $u_0 \in L^2 \left( \Omega \times
  \YY \right)$ if, for any function $\varphi \in \mathcal{D} [ \Omega,
  C^{\infty}_{\sharp} \left( \YY \right) ]$ we have
  \begin{equation}
    \lim_{\varepsilon \rightarrow 0} \int_{\Omega} u_{\varepsilon} (x) \varphi
    (x, x / \varepsilon) \mathd x = \int_{\Omega \times \YY} u_0 (x, y)
    \varphi (x, y) \mathd x \; \mathd y .
  \end{equation}
  In that case we write $u_{\varepsilon} \twos u_0$. We say that the sequence
  $(u_{\varepsilon})$ strongly two-scale converges to a limit $u_0 \in L^2
  \left( \Omega \times \YY \right)$, if $u_{\varepsilon} \twos u_0$ and $\|
  u_0 \|_{L^2 \left( \Omega \times \YY \right)} = \lim_{\varepsilon
  \rightarrow 0} \| u_{\varepsilon} \|_{L^2 (\Omega)}$.
\end{definition}

It is now immediate to understand the role played by two-scale convergence in
the homogenization process. Indeed, by writing (\ref{eq:model-problemwf}) in
the form
\begin{equation}
  \int_{\Omega} \nabla u_{\varepsilon} (x) \cdot A^{\T} \left(
  \frac{x}{\varepsilon} \right) \nabla \varphi_{\varepsilon} (x) \mathd x \eqs
  \int_{\Omega} f (x) \varphi_{\varepsilon} (x) \mathd x,
\end{equation}
and choosing the right shape for the test functions $\varphi_{\varepsilon}$,
it is possible to interpret the left-hand side of the previous relation as the
product of a strongly two-scale convergent sequence (namely
$A^{\T}_{\varepsilon} \nabla \varphi_{\varepsilon} (x)$) with the weakly
two-scale convergent sequence $\nabla u_{\varepsilon}$, from which weak
two-scale convergence of the product, and hence the homogenized equation,
easily follows (cfr. {\cite{Allaire1992,Donato1999}} for details).

Unfortunately, for this procedure to be possible it is essential to add a
technical hypothesis: the sequence of coefficients $(A_{\varepsilon})$ must be
{\tmem{admissible}} in the sense that (cfr. {\cite{Allaire1992}})
\begin{equation}
  \lim_{\varepsilon \rightarrow 0} \| A_{\varepsilon} \|_{L^2 (\Omega)} \eqs
  \| A \|_{L^2 \left( \Omega \times \YY \right)} .
\end{equation}
It turns out that this is a subtle notion. Indeed, for a given function $\psi
\in L^2_{\sharp} [ \YY, L^2 (\Omega) ]$ there is no reasonable way
to give a meaning to the {\guillemotleft}trace{\guillemotright} function $x
\mapsto \psi (x, x / \varepsilon)$. The complete space of admissible functions
is not known much more precisely. Functions in $L^p [ \Omega, C_{\sharp}
\left( \YY \right) ]$ as well as $L^p_{\sharp} [ \YY, C (\Omega)
]$ are admissible, but it is unclear how much the regularity of $\psi$
can be weakened: we refer to {\cite{Allaire1992}} for an explicit construction
of a non admissible function which belongs to $C [ \Omega, L^1_{\sharp}
\left( \YY \right)]$.

\subsection{The cell averaging idea}\label{subsec:intro.newapproach}

The {\guillemotleft}classical{\guillemotright} approach to periodic
homogenization originates by the modeling assumption that since the
heterogeneities are evenly distributed inside the media $\Omega$, we can think
of the material as being immersed in a grid of small identical cubes
$\YY_{\varepsilon}$, the side-length of which is $\varepsilon$ (see Figure
\ref{fig:perhom1}). If we denote by $\Omega_a \assign \Omega + a$, with $a \in
\RR^N$, a translated copy of $\Omega$ such that $\Omega \cap \Omega_a \neq
\emptyset$, this modeling approach assumes that, at scale $\varepsilon$, the
contribution of the diffusion coefficients at any $x \in \Omega \cap
\Omega_a$, is given by $A (x / \varepsilon)$ both if we focus on the problem
$- \tmop{div} (A_{\varepsilon} \nabla u_{\varepsilon}) = f$ in $\Omega$ and on
the problem $(f_a \assign f (x - a))$ $- \tmop{div} (A_{\varepsilon} \nabla
u_{\varepsilon}) = f_a$ in $\Omega_a$. Although this assumption is
mathematically reasonable when $\varepsilon$ tends to be very small, it is
nevertheless the reason why the two-scale convergence produces
{\guillemotleft}two-variables{\guillemotright} functions starting from a
family of {\guillemotleft}one-variable{\guillemotright} functions.

\begin{figure}[t]
  \includegraphics{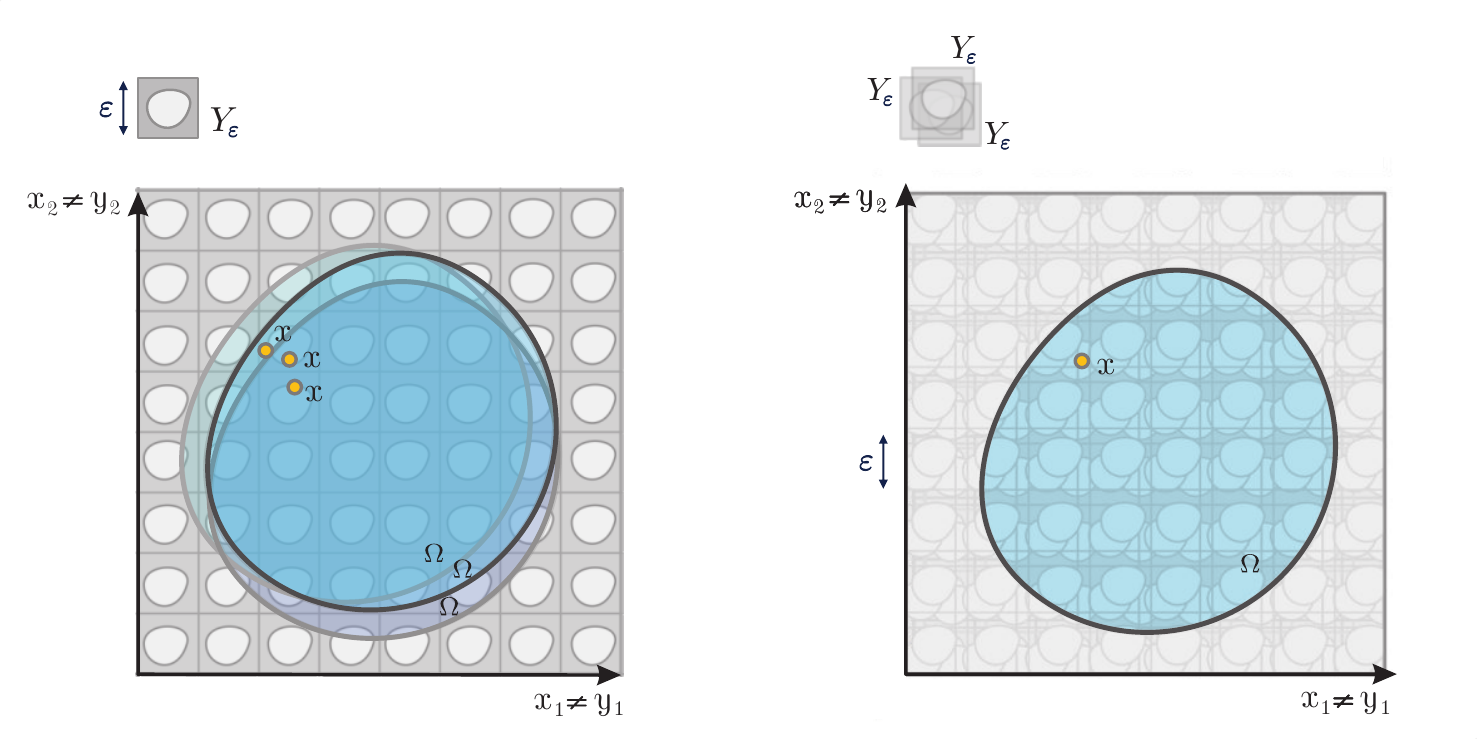}
  \caption{\label{fig:perhom2}More realistic is to think of the material as
  being immersed in a grid of small identical cubes $\YYe$, up to an unknown
  translation of size smaller than $\varepsilon$. We thus consider all
  possible translations, which we take into account by the introduction of a
  new variable.}
\end{figure}

On the other hand, it is clear that a more realistic approach consists in
taking into account the effects of the diffusion coefficients $A_{\varepsilon}
\assign A (x / \varepsilon)$ via a family of displacement of length at most
$\varepsilon$, i.e. via the family of diffusion coefficients $\left(
A_{\varepsilon} \left( \, \cdot \, + \varepsilon y \right)
\right)_{(\varepsilon, y) \in \RR^+ \times \YY} = (A (y + \cdot /
\varepsilon))_{(\varepsilon, y) \in \RR^+ \times \YY}$, and hence (see Figure
\ref{fig:perhom2}) via the family of boundary value problems depending on the
cell-size parameter $\varepsilon \in \RR^+$ and on the translation parameter
$y \in \YY$. The new homogenized problem then goes through the following two
steps: for every $\varepsilon \in \RR^+$ find (in a suitable sense) a
$\YY$-periodic solution $u_{\varepsilon} (x, y)$ of the Dirichlet problem
\begin{equation}
  - \tmop{div} (A_{\varepsilon} (x + \varepsilon y) \nabla u_{\varepsilon} (x,
  y)) = f (x) \quad \text{in } \Omega, \quad u_{\varepsilon} (x, y) = 0 \quad
  \text{on } \partial \Omega ; \label{eq:model-problemOmeganew}
\end{equation}
then take the average $\langle u_{\varepsilon} \rangle_{\YY}$ as a more
realistic modelization of the solution associated, at scale $\varepsilon$, to
evenly distributed heterogeneities inside the media $\Omega$.

In this framework the homogenization process demands for the computation of
the limiting behaviour, as $\varepsilon \rightarrow 0$, of the family of two
variable solutions $u_{\varepsilon} (x, y)$, i.e. for an asymptotic expansion
of the form
\begin{equation}
  u_{\varepsilon} (x, y) \eqs u_0 \left( x, y + \frac{x}{\varepsilon} \right)
  + \varepsilon \; u_1 \left( x, y + \frac{x}{\varepsilon} \right) +
  \varepsilon^2 \; u_2 \left( x, y + \frac{x}{\varepsilon} \right) + \cdots,
  \label{eq:asymptexpansion}
\end{equation}
in which $u_0$ is the solution of the homogenized equation and $u_1$ is the
so-called {\tmem{first order corrector}} (cfr. the analogues definitions in
{\cite{Allaire1992,Donato1999}}).

We are now in position to explain the new approach. To this end, let us
introduce the operator
\begin{equation}
  \mathcal{F}_{\varepsilon} : u \in L^2_{\sharp} [ \YY, L^2 (\Omega)
  ] \mapsto u (x, y - x / \varepsilon) \in L^2_{\sharp} [ \YY, L^2
  (\Omega) ] . \label{eq:cellshiftop1}
\end{equation}
Due to the $Y$-periodicity of $A$, the variational formulation of
(\ref{eq:model-problemOmeganew}) reads as the problem of finding
$u_{\varepsilon} \in L^2_{\sharp} [ \YY, H^1_0 (\Omega) ]$ such
that
\begin{equation}
  \int_{\Omega \times \YY} A (y) \mathcal{F}_{\varepsilon} \left( \gradx
  u_{\varepsilon} \right) (x, y) \cdot \mathcal{F}_{\varepsilon} \left( \gradx
  \psi_{\varepsilon} \right) (x, y) \mathd x \mathd y \eqs \int_{\Omega \times
  \YY} f (x) \mathcal{F}_{\varepsilon} (\psi_{\varepsilon}) (x, y) \mathd x
  \mathd y \label{eq:newweakform}
\end{equation}
for every $\psi_{\varepsilon} \in L^2_{\sharp} [ \YY, H^1_0 (\Omega)
]$. Therefore, if $\mathcal{F}_{\varepsilon} \left( \gradx
u_{\varepsilon} \right) \rightharpoonup \tmmathbf{v}$ weakly in $L^2_{\sharp}
[ \YY, L^2 (\Omega) ]$, then for every couple of
{\guillemotleft}test functions{\guillemotright} $\psi, \tmmathbf{\psi} \in
L^2_{\sharp} [ \YY, L^2 (\Omega) ]$ such that for some family
$\psi_{\varepsilon} \in L^2_{\sharp} [ \YY, H^1_0 (\Omega) ]$ we
have $\mathcal{F}_{\varepsilon} (\psi_{\varepsilon}) \rightarrow \psi$ and
$\mathcal{F}_{\varepsilon} \left( \gradx \psi_{\varepsilon} \right)
\rightarrow \tmmathbf{\psi}$ strongly in $L^2_{\sharp} [ \YY, L^2
(\Omega) ]$, passing to the limit in (\ref{eq:newweakform}), we finish
with the {\guillemotleft}homogenized equation{\guillemotright}
\begin{equation}
  \int_{\Omega \times \YY} A (y) \tmmathbf{v} (x, y) \cdot \tmmathbf{\psi} (x,
  y) \mathd x \mathd y \eqs \int_{\Omega \times \YY} f (x) \psi (x, y) \mathd
  x \mathd y.
\end{equation}
Of course, to find an explicit expression for the homogenized equation, and
more generally to build a kind of two-scale calculus, it is important to
investigate the interconnections between the convergence of the families
$u_{\varepsilon}$ and $\mathcal{F}_{\varepsilon} (u_{\varepsilon})$ in
$L^2_{\sharp} [ \YY, H^1 (\Omega) ]$, and to understand which are
the subspaces of $L^2_{\sharp} [ \YY, H^1 (\Omega) ]$ which are
reachable by strong convergence of family of the type
$\mathcal{F}_{\varepsilon} (\varphi_{\varepsilon})$ in $L^2_{\sharp} [
\YY, H^1 (\Omega) ]$. This and many other important aspects of the
question are the object of the next two sections.

\section{The alternative approach to two-scale convergence}\label{sec:2NA}

\subsection{Notation and preliminary definitions}

In what follows we denote by $\YY = [0, 1]^N$ the unit cell of $\RR^N$ and by
$\Omega$ an open set of $\RR^N$. For any measurable function $u$ defined on
$\YY$ we denote by $\langle u \rangle_{\YY}$ the integral average of $u$.

By $C^{\infty}_{\sharp} [ \YY, \mathcal{D} (\Omega) ]$ we mean the
vector space of test functions $u : \Omega \times \RR^N \rightarrow \RR$ such
that the section $u (x, \cdot) \in C^{\infty}_{\sharp} \left( \YY \right)$ for
every $x \in \Omega$, and the section $u (\cdot, y) \in \mathcal{D} (\Omega)$
for every $y \in \RR^N$. Similarly we denote by $L^2_{\sharp} [ \YY, L^2
(\Omega) ]$ the Hilbert space of $\YY$-periodic distributions which are
in $L^2 \left( \Omega \times \YY \right)$, and by $L^2_{\sharp} [ \YY,
H^1 (\Omega) ]$ the Hilbert subspace of $L^2_{\sharp} [ \YY, L^2
(\Omega) ]$ constituted of distributions $u$ such that $\gradx u \in
L^2_{\sharp} [ \YY, L^2 (\Omega) ]$.

Next, we denote by $L^2 [ \Omega ; H^1_{\sharp} \left( \YY \right)
]$ the Hilbert space of $\YY$-periodic distributions $u \in \mathcal{D}'
\left( \Omega \times \RR^N \right)$ such that $u (\cdot, y) \in L^2 (\Omega)$
for a.e. $y \in \YY$ and $u (x, \cdot) \in H^1_{\tmop{loc}} \left( \RR^N
\right)$ for a.e. $x \in \Omega$.

Finally, in the next Proposition \ref{prop:isometricFeps}, we denote by
$\mathcal{E}'_{\sharp} [ \YY, \mathcal{D}' (\Omega) ]$ the
algebraic dual of $C^{\infty}_{\sharp} [ \YY, \mathcal{D} (\Omega)
]$, and for any $u \in \mathcal{E}'_{\sharp} [ \YY, \mathcal{D}'
(\Omega) ]$ and any $\tmmathbf{\psi} \in C^{\infty}_{\sharp} [ \YY,
\mathcal{D} (\Omega) ]^N$ we define the partial gradient $\gradx u$ by
the position $\left\langle \gradx u, \tmmathbf{\psi} \right\rangle \assign -
\left\langle u, \divx \tmmathbf{\psi} \right\rangle$ and the
$\varepsilon$-cell shifting of $u$ by the position $\langle u (x, y - x /
\varepsilon), \tmmathbf{\psi} (x, y) \rangle \assign \langle u (x, y),
\tmmathbf{\psi} (x, y + x / \varepsilon) \rangle$.

\subsection{Cell averaging two-scale convergence}

Motivated by the considerations made in subsection
\ref{subsec:intro.newapproach} we give the following

\begin{definition}
  Let $\Omega \subseteq \RR^N$ be an open set and $\YY$ the unit cell of
  $\RR^N$. For any $\varepsilon > 0$, we define the
  $\varepsilon$-{\mybold{cell shift operator}} $\mathcal{F}_{\varepsilon}$ by
  the position
  \begin{equation}
    u \in L^2_{\sharp} [ \YY, L^2 (\Omega) ] \mapsto
    \mathcal{F}_{\varepsilon} (u) \assign u (x, y - x / \varepsilon) \in
    L^2_{\sharp} [ \YY, L^2 (\Omega) ],
  \end{equation}
  i.e. as the composition of $u$ with the diffeomorphism $(x, y) \in \Omega
  \times \RR^N \mapsto (x, y - x / \varepsilon) \in \Omega \times \RR^N$. We
  then denote by $\mathcal{F}^{\ast}_{\varepsilon}$ the algebraic adjoint
  operator which maps $u (x, y)$ to $u (x, y + x / \varepsilon)$.
\end{definition}

\begin{definition}
  A sequence of $L^2_{\sharp} [ \YY, L^2 (\Omega) ]$ functions
  $(u_{\varepsilon})_{\varepsilon \in \RR^+}$ is said to weakly two-scale
  converges to a function $u_0 \in L^2_{\sharp} [ \YY, L^2 (\Omega)
  ]$, if $\mathcal{F}_{\varepsilon} (u_{\varepsilon}) \rightharpoonup
  u_0$ weakly in $L^2_{\sharp} [ \YY, L^2 (\Omega) ]$, i.e if and
  only if
  \begin{equation}
    \lim_{\varepsilon \rightarrow 0^+} \int_{\Omega \times \YY}
    u_{\varepsilon} \left( x, y - \frac{x}{\varepsilon} \right) \psi (x, y)
    \mathd x \mathd y = \int_{\Omega \times \YY} u_0 (x, y) \psi (x, y) \mathd
    x \mathd y,
  \end{equation}
  for every $\psi \in L^2_{\#} [ \YY, L^2 (\Omega) ]$. In that case
  we write $u_{\varepsilon} \twoheadrightarrow u_0$ weakly in $L^2_{\sharp}
  [ \YY, L^2 (\Omega) ]$. We say that $u_{\varepsilon}
  \twoheadrightarrow u_0$ strongly in $L^2_{\sharp} [ \YY, L^2 (\Omega)
  ]$ if $\mathcal{F}_{\varepsilon} (u_{\varepsilon}) \rightarrow u_0$
  strongly in $L^2_{\sharp} [ \YY, L^2 (\Omega) ]$.
\end{definition}

\begin{remark}
  We have stated the definition in the framework of square summable functions.
  Nevertheless, almost all of what we say here and hereinafter easily extends,
  with obvious modifications, to the setting of $L^p$ spaces.
\end{remark}

\begin{remark}
  Since the notion of two-scale convergence relies on the classical notion of
  weak convergence in Banach space, we immediately get, among others,
  boundedness in norm of weakly two-scale convergent sequences. This aspect is
  not captured by the classical notion of two-scale convergence which, by
  testing convergence on functions in $\mathcal{D} [ \Omega,
  C^{\infty}_{\sharp} \left( \YY \right) ]$, i.e. having compact support
  in $\Omega$, may cause loss of information on any concentration of
  {\guillemotleft}mass{\guillemotright} near the boundary of the sequence
  $(u_{\varepsilon})_{\varepsilon \in \RR^+}$ (cfr.
  {\cite{lukkassen2002two}}).
\end{remark}

We now state some properties of the operator $\mathcal{F}_{\varepsilon}$,
which are simple consequence of the definitions, and will be used extensively
(and sometime tacitly) in the sequel:

\begin{proposition}
  \label{prop:isometricFeps}Let $\varepsilon > 0$. The operator
  $\mathcal{F}_{\varepsilon}$ is an isometric isomorphism of $L^2_{\sharp}
  [ \YY, L^2 (\Omega) ]$ and the following relations hold:
  \begin{itemizedot}
    \item If $\tmmathbf{\psi} \in C^{\infty}_{\sharp} [ \YY, \mathcal{D}
    (\Omega) ]^N$ then $\mathcal{F}_{\varepsilon} (\tmmathbf{\psi}) \in
    C^{\infty}_{\sharp} [ \YY, \mathcal{D} (\Omega) ]^N$ and one
    has
    \begin{equation}
      \divx \mathcal{F}_{\varepsilon} (\tmmathbf{\psi}) =
      \mathcal{F}_{\varepsilon} \left( \divx \tmmathbf{\psi} \right) -
      \frac{1}{\varepsilon} \mathcal{F}_{\varepsilon} \left( \divy
      \tmmathbf{\psi} \right), \quad \divy \mathcal{F}_{\varepsilon}
      (\tmmathbf{\psi}) =\mathcal{F}_{\varepsilon} \left( \divy
      \tmmathbf{\psi} \right) .  \label{eqpropdiv2}
    \end{equation}
  \end{itemizedot}
  Next, let us denote by $\mathcal{E}'_{\sharp} [ \YY, \mathcal{D}'
  (\Omega) ]$ the algebraic dual of $C^{\infty}_{\sharp} [ \YY,
  \mathcal{D} (\Omega) ]$:
  \begin{itemizedot}
    \item If $u \in \mathcal{E}'_{\sharp} [ \YY, \mathcal{D}' (\Omega)
    ]$ then $\mathcal{F}_{\varepsilon} (u) \in \mathcal{E}'_{\sharp}
    [ \YY, \mathcal{D}' (\Omega) ]$ and one has
    \begin{equation}
      \left\langle \gradx [\mathcal{F}_{\varepsilon} (u)], \tmmathbf{\psi}
      \right\rangle = \left\langle \mathcal{F}_{\varepsilon} \left( \gradx u
      \right) - \frac{1}{\varepsilon} \mathcal{F}_{\varepsilon} \left( \grady
      u \right), \tmmathbf{\psi} \right\rangle, \quad \left\langle
      \mathcal{F}_{\varepsilon} \left( \grady u \right), \tmmathbf{\psi}
      \right\rangle = \left\langle \grady [\mathcal{F}_{\varepsilon} (u)],
      \tmmathbf{\psi} \right\rangle,
      \label{eq:gradandFeps}
    \end{equation}
    for any $\tmmathbf{\psi} \in C^{\infty}_{\sharp} [ \YY, \mathcal{D}
    (\Omega) ]^N$.
  \end{itemizedot}
\end{proposition}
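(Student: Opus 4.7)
The plan is to prove the three assertions in order---the isometric isomorphism, the smooth chain-rule identities \eqref{eqpropdiv2}, and their distributional counterparts \eqref{eq:gradandFeps}---each being essentially a direct calculation once the definitions are unpacked. For the isometry, I would observe that for each fixed $x \in \Omega$ the section $u(x,\cdot)$ is $\YY$-periodic, so translating its argument by $-x/\varepsilon$ preserves the $L^2(\YY)$-norm (translation invariance of Lebesgue measure on the torus). Fubini then yields $\|\mathcal{F}_\varepsilon(u)\|_{L^2_\sharp[\YY, L^2(\Omega)]} = \|u\|_{L^2_\sharp[\YY, L^2(\Omega)]}$, and a direct verification of the bilinear pairing shows that $\mathcal{F}^{\ast}_\varepsilon$ is both a two-sided inverse and the Hilbert adjoint of $\mathcal{F}_\varepsilon$.

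For the smooth chain rule, the diffeomorphism $(x,y) \mapsto (x, y-x/\varepsilon)$ preserves $x$-supports and $\YY$-periodicity in $y$, so $\mathcal{F}_\varepsilon(\tmmathbf{\psi}) \in C^\infty_\sharp[\YY, \mathcal{D}(\Omega)]^N$ whenever $\tmmathbf{\psi}$ is. Applying the classical chain rule component-wise to $\mathcal{F}_\varepsilon(\psi_i)(x,y) = \psi_i(x, y-x/\varepsilon)$ gives
\begin{equation*}
\partial_{x_i}[\mathcal{F}_\varepsilon(\psi_i)] = \mathcal{F}_\varepsilon(\partial_{x_i}\psi_i) - \frac{1}{\varepsilon}\mathcal{F}_\varepsilon(\partial_{y_i}\psi_i), \qquad \partial_{y_i}[\mathcal{F}_\varepsilon(\psi_i)] = \mathcal{F}_\varepsilon(\partial_{y_i}\psi_i),
\end{equation*}
and summing over $i$ produces \eqref{eqpropdiv2}. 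The same computation applied to $\mathcal{F}^{\ast}_\varepsilon$ yields identical identities with the sign of $1/\varepsilon$ flipped, which will be needed below.

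The distributional identities then follow by pure duality. Unpacking the definitions,
\begin{equation*}
\left\langle \gradx[\mathcal{F}_\varepsilon(u)], \tmmathbf{\psi}\right\rangle = -\left\langle \mathcal{F}_\varepsilon(u), \divx \tmmathbf{\psi}\right\rangle = -\left\langle u, \mathcal{F}^{\ast}_\varepsilon(\divx \tmmathbf{\psi})\right\rangle,
\end{equation*}
and rewriting the inner argument via the smooth chain rule for $\mathcal{F}^{\ast}_\varepsilon$ as $\divx \mathcal{F}^{\ast}_\varepsilon(\tmmathbf{\psi}) - \frac{1}{\varepsilon}\divy \mathcal{F}^{\ast}_\varepsilon(\tmmathbf{\psi})$, followed by the definitions of $\gradx u$ and $\grady u$ on the algebraic dual, produces $\langle \mathcal{F}_\varepsilon(\gradx u) - \frac{1}{\varepsilon}\mathcal{F}_\varepsilon(\grady u), \tmmathbf{\psi}\rangle$. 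The $\grady$ identity is obtained in the same way using only the second formula of \eqref{eqpropdiv2}. The only real (and minor) obstacle is notational bookkeeping: keeping track of the signs of $1/\varepsilon$, of whether $\mathcal{F}_\varepsilon$ or $\mathcal{F}^{\ast}_\varepsilon$ is being applied, and on which side of the pairing each operator acts. No analytic subtlety arises beyond this, since the hypotheses on the function classes are precisely what guarantees that all test functions lie in $C^\infty_\sharp[\YY, \mathcal{D}(\Omega)]$ so that the classical chain rule applies verbatim.
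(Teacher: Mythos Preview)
Your proposal is correct and follows essentially the same route as the paper's own proof: translation invariance on the torus for the isometry, the chain rule for the smooth divergence identities, and duality (passing to $\mathcal{F}^{\ast}_\varepsilon$ and using the corresponding chain-rule identity) for the distributional gradient formulas. The paper is terser---it dismisses \eqref{eqpropdiv2} as a ``standard computation'' and writes the duality step in one line---but the logic is identical.
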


\begin{proof}
  For every $u \in L^2_{\#} [ \YY, L^2 (\Omega) ]$, by the
  translational invariance of the integral over $\YY$ with respect to the
  section $u (x, \cdot) \in L^2 \left( \YY \right)$, we get
  \begin{equation}
    \| \mathcal{F}_{\varepsilon} (u) \|_{L^2 \left( \Omega \times \YY \right)}
    \eqs \left( \int_{\Omega \times \YY} | u (x, y - x / \varepsilon) |^2
    \right)^{1 / 2} \eqs \| u \|_{L^2 \left( \Omega \times \YY \right)} .
  \end{equation}
  Relation (\ref{eqpropdiv2}) is a standard computation. Equation
  (\ref{eq:gradandFeps}) is a direct consequence of (\ref{eqpropdiv2}). Indeed
  for any $\tmmathbf{\psi} \in C^{\infty}_{\sharp} [ \YY, \mathcal{D}
  (\Omega) ]^N$ we have 
	\begin{align}
		\left\langle \gradx [\mathcal{F}_{\varepsilon}
  (u)], \tmmathbf{\psi} \right\rangle & \assign - \left\langle u, \;
  \mathcal{F}_{\varepsilon}^{\ast} \left( \divx \tmmathbf{\psi} \right)
  \right\rangle \\
	&= - \left\langle u, \; \divx \mathcal{F}_{\varepsilon}^{\ast}
  (\tmmathbf{\psi}) - \frac{1}{\varepsilon} \divy
  \mathcal{F}_{\varepsilon}^{\ast} (\tmmathbf{\psi}) \right\rangle,
	\end{align}
	and this
  last expression is nothing else than (\ref{eq:gradandFeps}).
\end{proof}

\section{Compactness results}\label{sec:3Compact}
As already pointed out, one of the greatest strengths of the new notion of
two-scale convergence is in the simplification we gain in proving compactness
results for that notion. In that regard it is important to remark that one of
the main contributions given by {\tmname{Allaire}} in {\cite{Allaire1992}} was
to give a concise proof of the nowadays classical compactness results
associated to two-scale convergence, by the means of Banach-Alaoglu theorem
and Riesz representation theorem for Radon measures (cfr. Theorem 1.2 in
{\cite{Allaire1992}}).

\subsection{Compactness in $L^2_{\sharp} [ \YY, L^2 (\Omega) ]$}
As as previously announced, the proof of the following compactness result is
completely straightforward (cfr. Theorem 1.2 in {\cite{Allaire1992}}).

\begin{theorem}
  \label{Thm:compL2}From every bounded subset $(u_{\varepsilon})_{\varepsilon
  > 0}$ of $L^2_{\sharp} [ \YY, L^2 (\Omega) ]$ is possible to
  extract a weakly two-scale convergent sequence.
\end{theorem}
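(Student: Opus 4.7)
The plan is to observe that, thanks to the definition of weak two-scale convergence in terms of the cell shift operator $\mathcal{F}_\varepsilon$, the statement reduces directly to the standard weak sequential compactness of bounded sets in a Hilbert space. This is precisely the simplification advertised earlier in the excerpt: there is no need to invoke Riesz representation for Radon measures or to worry about admissibility of test functions, as was the case for classical two-scale convergence.

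Concretely, I would proceed as follows. First, I would apply Proposition \ref{prop:isometricFeps}, which asserts that $\mathcal{F}_\varepsilon$ is an isometric isomorphism of $L^2_\sharp[Y, L^2(\Omega)]$. Consequently, if $(u_\varepsilon)_{\varepsilon > 0}$ is bounded in $L^2_\sharp[Y, L^2(\Omega)]$, then the shifted family $(\mathcal{F}_\varepsilon(u_\varepsilon))_{\varepsilon > 0}$ satisfies
\begin{equation*}
  \| \mathcal{F}_\varepsilon(u_\varepsilon) \|_{L^2(\Omega \times Y)} \eqs \| u_\varepsilon \|_{L^2(\Omega \times Y)} \leqslant C,
\end{equation*}
with $C$ independent of $\varepsilon$. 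Second, since $L^2_\sharp[Y, L^2(\Omega)]$ is a separable Hilbert space, hence reflexive, the Banach--Alaoglu theorem (together with the Eberlein--\v Smulian theorem, or simply the sequential weak compactness of bounded sets in separable Hilbert spaces) yields a subsequence $\varepsilon_k \to 0^+$ and an element $u_0 \in L^2_\sharp[Y, L^2(\Omega)]$ such that $\mathcal{F}_{\varepsilon_k}(u_{\varepsilon_k}) \rightharpoonup u_0$ weakly in $L^2_\sharp[Y, L^2(\Omega)]$. Third, by the very definition of weak two-scale convergence, this means $u_{\varepsilon_k} \twoheadrightarrow u_0$, which is the desired conclusion.

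There is essentially no obstacle here: the entire content of the theorem has been absorbed into Proposition \ref{prop:isometricFeps} (isometry) and the standing reflexivity of $L^2_\sharp[Y, L^2(\Omega)]$. This is in sharp contrast with the classical setting, where one had to construct the two-scale limit as a Radon measure on $\Omega \times Y$ and only afterwards identify it with an $L^2$ function. If anything, the only thing worth double-checking is that the target space in the definition of $\mathcal{F}_\varepsilon$ really is the same Hilbert space as the source, so that weak compactness can be applied; this is guaranteed by the isomorphism property stated in Proposition \ref{prop:isometricFeps}.
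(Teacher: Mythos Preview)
Your proof is correct and follows essentially the same approach as the paper: invoke Proposition~\ref{prop:isometricFeps} to transfer the uniform bound to $(\mathcal{F}_\varepsilon(u_\varepsilon))_{\varepsilon>0}$, then apply weak sequential compactness in the Hilbert space $L^2_\sharp[\YY, L^2(\Omega)]$ and read off the definition of weak two-scale convergence. The paper's version is slightly more terse but identical in substance.
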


\begin{proof}
  According to Proposition \ref{prop:isometricFeps},
  $\mathcal{F}_{\varepsilon}$ is an isometric isomorphism of $L^2_{\sharp}
  [ \YY, L^2 (\Omega) ]$ in it, and therefore also
  $(\mathcal{F}_{\varepsilon} (u_{\varepsilon}))_{\varepsilon > 0}$ is a
  bounded subset of $L^2_{\sharp} [ \YY, L^2 (\Omega) ]$. Therefore
  there exists an $u_0 \in L^2_{\sharp} [ \YY, L^2 (\Omega) ]$ and
  a subsequence extracted from $(u_{\varepsilon})$, still denoted by
  $(u_{\varepsilon})$, such that $\mathcal{F}_{\varepsilon} (u_{\varepsilon})
  \rightharpoonup u_0$ in $L^2_{\sharp} [ \YY, L^2 (\Omega) ]$,
  i.e. such that $u_{\varepsilon} \twoheadrightarrow u_0$ in $L^2_{\sharp}
  [ \YY, L^2 (\Omega) ]$.
\end{proof}

\subsection{Compactness in $L^2_{\sharp} [ \YY, H^1 (\Omega) ]$}

The following compactness results are the counterparts of the well-known
corresponding results for the classical notion two-scale convergence (cfr.
Proposition 1.14 in {\cite{Allaire1992}}).

\begin{proposition}
  \label{prop:g1classicalctwoscaleL2H1}Let $(u_{\varepsilon})$ be a sequence
  in $L^2_{\sharp} [ \YY, H^1 (\Omega) ]$ such that for some $(u_0,
  \tmmathbf{v}) \in L^2_{\sharp} [ \YY, L^2 (\Omega) ]^{N + 1}$ one
  has
  \begin{equation}
    u_{\varepsilon} \twoheadrightarrow u_0 \quad \text{in } L^2_{\sharp}
    [ \YY, L^2 (\Omega) ] \quad, \quad \gradx u_{\varepsilon}
    \twoheadrightarrow \tmmathbf{v} \quad \text{in } L^2_{\sharp} [ \YY,
    L^2 (\Omega) ]^N,
  \end{equation}
  then $u_0 (x, y) = \langle u_0 (x, \cdot) \rangle_{\YY}$, i.e. the two-scale
  limit $u_0$ does not depends on the $y$ variable. Moreover there exists an
  element $u_1 \in L^2 [ \Omega ; H^1_{\sharp} \left( \YY \right)
  ]$ such that $\tmmathbf{v}= \gradx u_0 + \grady u_1$.
\end{proposition}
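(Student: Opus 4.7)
The argument rests entirely on the commutation identity of Proposition~\ref{prop:isometricFeps} between the cell shift operator $\mathcal{F}_{\varepsilon}$ and the partial gradients. Substituting the second relation of (\ref{eq:gradandFeps}) into the first and rearranging, one obtains the distributional identity
\[
\mathcal{F}_{\varepsilon}(\gradx u_{\varepsilon}) = \gradx[\mathcal{F}_{\varepsilon}(u_{\varepsilon})] + \frac{1}{\varepsilon}\, \grady[\mathcal{F}_{\varepsilon}(u_{\varepsilon})].
\]
This single identity, exploited in two different ways, delivers both assertions.

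To establish the $y$-independence of $u_0$, I would multiply by $\varepsilon$ and pass to the limit in $\mathcal{D}'(\Omega \times \RR^N)$. Since $\mathcal{F}_{\varepsilon}(u_{\varepsilon})$ and $\mathcal{F}_{\varepsilon}(\gradx u_{\varepsilon})$ are bounded in $L^2_{\sharp}[\YY, L^2(\Omega)]$ (being weakly convergent), both $\varepsilon\, \mathcal{F}_{\varepsilon}(\gradx u_{\varepsilon})$ and $\varepsilon\, \gradx[\mathcal{F}_{\varepsilon}(u_{\varepsilon})]$ vanish distributionally as $\varepsilon \to 0^+$, so that $\grady[\mathcal{F}_{\varepsilon}(u_{\varepsilon})] \to 0$ in $\mathcal{D}'$. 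Combined with $\mathcal{F}_{\varepsilon}(u_{\varepsilon}) \rightharpoonup u_0$ weakly in $L^2$ and the continuity of $\grady$ on $\mathcal{D}'$, this forces $\grady u_0 = 0$, which is exactly the first claim.

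For the identification of the corrector $u_1$, I would test the identity against an arbitrary $\tmmathbf{\psi} \in C^{\infty}_{\sharp}[\YY, \mathcal{D}(\Omega)]^N$ satisfying $\divy \tmmathbf{\psi} = 0$. Integration by parts in $y$, valid by $\YY$-periodicity, kills the singular $1/\varepsilon$ term and leaves
\[
\int_{\Omega \times \YY} \mathcal{F}_{\varepsilon}(\gradx u_{\varepsilon}) \cdot \tmmathbf{\psi}\, \mathd x\, \mathd y = -\int_{\Omega \times \YY} \mathcal{F}_{\varepsilon}(u_{\varepsilon})\, \divx \tmmathbf{\psi}\, \mathd x\, \mathd y.
\]
Passing to the limit through the two weak convergences and the already-established $y$-independence of $u_0$, one obtains $\int_{\Omega \times \YY}(\tmmathbf{v} - \gradx u_0) \cdot \tmmathbf{\psi}\, \mathd x\, \mathd y = 0$ for every smooth $y$-solenoidal $\tmmathbf{\psi}$.

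The main obstacle lies in converting this orthogonality into the existence of $u_1 \in L^2[\Omega, H^1_{\sharp}(\YY)]$ with $\tmmathbf{v} - \gradx u_0 = \grady u_1$. This amounts to a parametric Helmholtz decomposition on the torus: one needs density of smooth $y$-divergence-free fields with compact $x$-support inside the closed $L^2_{\sharp}[\YY, L^2(\Omega)]^N$-subspace of $y$-solenoidal vector fields, together with the classical fact that in $L^2_{\sharp}(\YY)^N$ the orthogonal of the solenoidal fields is exactly the set of $y$-gradients of $H^1_{\sharp}(\YY)$ functions. Ensuring that the resulting $u_1$ inherits enough $x$-measurability to actually lie in $L^2[\Omega, H^1_{\sharp}(\YY)]$ is the technical point that must be handled with care.
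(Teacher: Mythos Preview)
Your plan is correct and follows essentially the same route as the paper: the paper also derives the identity $\mathcal{F}_{\varepsilon}(\gradx u_{\varepsilon}) = \gradx[\mathcal{F}_{\varepsilon}(u_{\varepsilon})] + \varepsilon^{-1}\grady[\mathcal{F}_{\varepsilon}(u_{\varepsilon})]$ from Proposition~\ref{prop:isometricFeps}, multiplies by $\varepsilon$ to obtain $\grady u_0 = 0$, then tests against $y$-solenoidal $\tmmathbf{\psi}\in C^{\infty}_{\sharp}[\YY,\mathcal{D}(\Omega)]^N$ and invokes De~Rham's theorem on the torus to produce $u_1$. The only addendum is that the paper disposes of the technical point you flag (orthogonality to smooth solenoidal fields implies being a $y$-gradient with the right $x$-integrability) by remarking that in this periodic setting the result is elementary via Fourier series on $\YY$; this simultaneously handles the density issue and the $L^2(\Omega)$-measurability of the Fourier coefficients of $u_1$.
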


\begin{proof}
  The relation $\gradx u_{\varepsilon} \twoheadrightarrow \tmmathbf{v}$ in
  $L^2_{\sharp} [ \YY, L^2 (\Omega) ]^N$ means, in particular, that
  for $\varepsilon \rightarrow 0$ one has $\left\langle
  \mathcal{F}_{\varepsilon} \left( \gradx u_{\varepsilon} \right),
  \tmmathbf{\psi} \right\rangle \rightarrow \langle \tmmathbf{v},
  \tmmathbf{\psi} \rangle$ for any $\tmmathbf{\psi} \in C^{\infty}_{\sharp}
  [ \YY, \mathcal{D} (\Omega) ]^N$. Moreover, from
  (\ref{eqpropdiv2}) we get
  \begin{eqnarray}
    \int_{\Omega \times \YY} \gradx [\mathcal{F}_{\varepsilon} (u)] (x, y)
    \cdot \tmmathbf{\psi} (x, y) \mathd x \mathd y & \eqs & - \int_{\Omega
    \times \YY} \mathcal{F}_{\varepsilon} (u_{\varepsilon}) (x, y) \divx
    \tmmathbf{\psi} (x, y) \mathd x \mathd y \nonumber\\
    &  & \qquad - \frac{1}{\varepsilon} \int_{\Omega \times \YY}
    \mathcal{F}_{\varepsilon} (u_{\varepsilon}) (x, y)  \divy \tmmathbf{\psi}
    (x, y) \mathd x \mathd y  \label{eq:templemmaindipendencefromy0L2H10}\\
    & \eqs & \int_{\Omega \times \YY} \mathcal{F}_{\varepsilon} \left( \gradx
    u \right) (x, y) \cdot \tmmathbf{\psi} (x, y) \mathd x \mathd y
    \nonumber\\
    &  & \qquad - \frac{1}{\varepsilon} \int_{\Omega \times \YY}
    \mathcal{F}_{\varepsilon} \left( \grady u \right) (x, y) \cdot
    \tmmathbf{\psi} (x, y) \mathd x \mathd y, 
    \label{eq:templemmaindipendencefromy0L2H1}
  \end{eqnarray}
  for any $\tmmathbf{\psi} \in C^{\infty}_{\sharp} [ \YY, \mathcal{D}
  (\Omega) ]^N$. Let us investigate the implications of
  (\ref{eq:templemmaindipendencefromy0L2H10}) and
  (\ref{eq:templemmaindipendencefromy0L2H1}). Since $\mathcal{F}_{\varepsilon}
  (u_{\varepsilon}) \rightharpoonup u_0$ and $\mathcal{F}_{\varepsilon} \left(
  \gradx u_{\varepsilon} \right) \rightharpoonup \tmmathbf{v}$, multiplying
  both members of relation (\ref{eq:templemmaindipendencefromy0L2H10}) by
  $\varepsilon$ and then letting $\varepsilon \rightarrow 0$ we get
  \begin{equation}
    \int_{\Omega \times \YY} u_0 (x, y)  \divy \tmmathbf{\psi} (x, y) \mathd x
    \mathd y \eqs 0 \quad \forall \tmmathbf{\psi} \in C^{\infty}_{\sharp}
    [ \YY, \mathcal{D} (\Omega) ]^N,
  \end{equation}
  from which the independence of the two-scale limit $u_0$ from the $y$
  variable follows. Thus for the limit function we have $u_0 (x, y) = \langle
  u_0 (x, \cdot) \rangle_{\YY}$ for every $y \in \YY$.
  
  On the other hand, from (\ref{eq:templemmaindipendencefromy0L2H1}), for
  every $\tmmathbf{\psi} \in C^{\infty}_{\sharp} [ \YY, \mathcal{D}
  (\Omega) ]^N$ such that $\divy \tmmathbf{\psi}= 0$ we have
  \begin{equation}
    \int_{\Omega \times \YY} \left( \mathcal{F}_{\varepsilon} \left( \gradx
    u_{\varepsilon} \right) (x, y) - \gradx [\mathcal{F}_{\varepsilon}
    (u_{\varepsilon})] (x, y)_{_{_{_{_{}}}}} \right) \cdot \tmmathbf{\psi} (x,
    y) \mathd x \mathd y \eqs 0.
  \end{equation}
  Since $\mathcal{F}_{\varepsilon} (u_{\varepsilon}) \rightharpoonup u_0$ in
  $L^2_{\sharp} [ \YY, L^2 (\Omega) ]$ one has $\gradx
  [\mathcal{F}_{\varepsilon} (u_{\varepsilon})] \rightarrow \gradx u_0$ in the
  sense of distribution; thus multiplying both members of the previous
  relation by $\varepsilon$ and then letting $\varepsilon \rightarrow 0$ we
  get (by hypothesis $\gradx u_{\varepsilon} \twoheadrightarrow \tmmathbf{v}$)
  \begin{equation}
    \int_{\Omega \times \YY} \left( \tmmathbf{v} (x, y) - \gradx u_0 (x, y)
    \right) \cdot \tmmathbf{\psi} (x, y) \mathd x \mathd y \eqs 0,
  \end{equation}
  for every $\tmmathbf{\psi} \in C^{\infty}_{\sharp} [ \YY, \mathcal{D}
  (\Omega) ]^N$ such that $\divy \tmmathbf{\psi}= 0$. According to
  De\,Rham's theorem, which in our context can be easily proved by means of
  Fourier series on $\YY$ (see e.g.$\;${\cite{Jikov1994}} p.6), the orthogonal
  complement of divergence-free functions are exactly the gradients, and
  therefore there exists a $u_1 \in L^2 [ \Omega ; H^1_{\sharp} \left(
  \YY \right) ]$ such that $\grady u_1 =\tmmathbf{v}- \gradx u_0$. This
  concludes the proof.
\end{proof}

\begin{proposition}
  \label{prop:g1classicalctwoscaleL2H1eps}Let $(u_{\varepsilon})$ be a
  sequence in $L^2_{\sharp} [ \YY, H^1 (\Omega) ]$ such that for
  some $(u_0, \tmmathbf{v}) \in [ L^2 \left( \Omega \times \YY \right)
  ]^{N + 1}$ one has
  \begin{eqnarray}
    u_{\varepsilon} \twoheadrightarrow u_0 \quad \text{in } L^2_{\sharp}
    [ \YY, L^2 (\Omega) ] & \lo{\tmop{and}} & \varepsilon \gradx
    u_{\varepsilon} \twoheadrightarrow \tmmathbf{v} \quad \text{in }
    L^2_{\sharp} [ \YY, L^2 (\Omega) ]^N, 
  \end{eqnarray}
  then $\tmmathbf{v}= \grady u_0$.
\end{proposition}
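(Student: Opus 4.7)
The plan is to run an argument in complete parallel with the proof of Proposition \ref{prop:g1classicalctwoscaleL2H1}, relying again on the commutation relations (\ref{eqpropdiv2})--(\ref{eq:gradandFeps}) between $\mathcal{F}_\varepsilon$ and partial differentiation. The identity I would exploit is
\begin{equation}
\gradx[\mathcal{F}_\varepsilon(u_\varepsilon)] \eqs \mathcal{F}_\varepsilon\bigl(\gradx u_\varepsilon\bigr) - \frac{1}{\varepsilon}\mathcal{F}_\varepsilon\bigl(\grady u_\varepsilon\bigr),
\end{equation}
together with $\mathcal{F}_\varepsilon(\grady u_\varepsilon) = \grady[\mathcal{F}_\varepsilon(u_\varepsilon)]$. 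Multiplying the first relation by $\varepsilon$ and rearranging gives
\begin{equation}
\varepsilon\,\mathcal{F}_\varepsilon\bigl(\gradx u_\varepsilon\bigr) \eqs \varepsilon\,\gradx[\mathcal{F}_\varepsilon(u_\varepsilon)] + \grady[\mathcal{F}_\varepsilon(u_\varepsilon)],
\end{equation}
which is the key algebraic identity: the factor $\varepsilon$ in front of $\gradx u_\varepsilon$ exactly balances the $1/\varepsilon$ appearing in the commutator, and transfers the whole derivative onto the $y$-variable in the limit.

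Next, I would test against an arbitrary $\boldsymbol{\psi} \in C^\infty_\sharp[Y,\mathcal{D}(\Omega)]^N$. Integrating by parts in $x$ and in $y$ against $\mathcal{F}_\varepsilon(u_\varepsilon)$ yields
\begin{equation}
\int_{\Omega\times Y} \varepsilon\,\mathcal{F}_\varepsilon(\gradx u_\varepsilon)\cdot\boldsymbol{\psi}\,\mathd x\,\mathd y \eqs -\varepsilon\int_{\Omega\times Y}\mathcal{F}_\varepsilon(u_\varepsilon)\,\divx\boldsymbol{\psi}\,\mathd x\,\mathd y - \int_{\Omega\times Y}\mathcal{F}_\varepsilon(u_\varepsilon)\,\divy\boldsymbol{\psi}\,\mathd x\,\mathd y.
\end{equation}
Passing to the limit $\varepsilon\rightarrow 0^+$ is now immediate: the left-hand side tends to $\int \mathbf{v}\cdot\boldsymbol{\psi}$ by the hypothesis $\varepsilon\gradx u_\varepsilon \twoheadrightarrow \mathbf{v}$; the first term on the right vanishes thanks to the prefactor $\varepsilon$ and the boundedness of $\mathcal{F}_\varepsilon(u_\varepsilon)$ in $L^2_\sharp[Y,L^2(\Omega)]$; the second term converges to $-\int u_0\,\divy\boldsymbol{\psi}$ by the hypothesis $u_\varepsilon\twoheadrightarrow u_0$.

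Hence $\int_{\Omega\times Y}\mathbf{v}\cdot\boldsymbol{\psi} = -\int_{\Omega\times Y} u_0\,\divy\boldsymbol{\psi}$ for every $\boldsymbol{\psi}\in C^\infty_\sharp[Y,\mathcal{D}(\Omega)]^N$, which means precisely that $u_0$ admits a weak $y$-gradient in $L^2_\sharp[Y,L^2(\Omega)]^N$ equal to $\mathbf{v}$, i.e. $\mathbf{v}=\grady u_0$. Since the strategy is a direct adaptation of the previous proof and the commutation identities do all the heavy lifting, I do not foresee a genuine obstacle; the only delicate point is the bookkeeping of the $\varepsilon$-factor, so that one correctly identifies which term disappears in the limit and which survives to produce the $y$-derivative.
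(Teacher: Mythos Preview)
Your proposal is correct and follows essentially the same route as the paper's proof: both derive the identity
\[
\int_{\Omega\times Y}\varepsilon\,\mathcal{F}_\varepsilon(\gradx u_\varepsilon)\cdot\tmmathbf{\psi}
= -\varepsilon\int_{\Omega\times Y}\mathcal{F}_\varepsilon(u_\varepsilon)\,\divx\tmmathbf{\psi}
  - \int_{\Omega\times Y}\mathcal{F}_\varepsilon(u_\varepsilon)\,\divy\tmmathbf{\psi}
\]
(this is exactly relation (\ref{eq:templemmaindipendencefromy0L2H10eps}) in the paper) and then pass to the limit, using the $\varepsilon$-prefactor to kill the $\divx$ term and the weak two-scale convergence $u_\varepsilon\twoheadrightarrow u_0$ to handle the $\divy$ term. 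Your write-up is in fact slightly more explicit than the paper's about which hypothesis drives each limit.
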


\begin{proof}
  As in the proof of Proposition \ref{prop:g1classicalctwoscaleL2H1} we have:
  \begin{eqnarray}
    \int_{\Omega \times \YY} \mathcal{F}_{\varepsilon} \left( \varepsilon
    \gradx u_{\varepsilon} \right) (x, y) \cdot \tmmathbf{\psi} (x, y) \mathd
    x \mathd y & \eqs & - \varepsilon \int_{\Omega \times \YY}
    \mathcal{F}_{\varepsilon} (u_{\varepsilon}) (x, y)  \divx \tmmathbf{\psi}
    (x, y) \mathd x \mathd y \nonumber\\
    &  & \qquad - \int_{\Omega \times \YY} \mathcal{F}_{\varepsilon}
    (u_{\varepsilon}) (x, y)  \divy \tmmathbf{\psi} (x, y) \mathd x \mathd y. 
    \label{eq:templemmaindipendencefromy0L2H10eps}
  \end{eqnarray}
	Let us investigate the implications of
  (\ref{eq:templemmaindipendencefromy0L2H10eps}). Since
  $\mathcal{F}_{\varepsilon} \left( \varepsilon \gradx u_{\varepsilon} \right)
  \rightharpoonup \tmmathbf{v}$ in $[ L^2 \left( \Omega \times \YY
  \right) ]^N$ one has that $\gradx [\mathcal{F}_{\varepsilon}
  (u_{\varepsilon})] \rightharpoonup \gradx u_0$ in $[ \mathcal{D}'
  \left( \Omega \times \YY \right) ]^N$. Then taking the limit for
  $\varepsilon \rightarrow 0$ in relation
  (\ref{eq:templemmaindipendencefromy0L2H10eps}) and integrating by parts, we
  get $\left\langle \tmmathbf{v}- \grady u_0, \tmmathbf{\psi} \right\rangle =
  0$ in $\mathcal{D}' \left( \Omega \times \YY \right)$ and therefore
  $\tmmathbf{v}= \grady u_0$.
\end{proof}

\subsection{Test functions reachable by strong two-scale convergence}
As pointed out at the end of subsection \ref{subsec:intro.newapproach}, in
order to identify the system of homogenized equations it is important to
understand the subspaces of $L^2_{\sharp} [ \YY, H^1 (\Omega) ]$
which are reachable by strong convergence in $L^2_{\sharp} [ \YY, H^1
(\Omega) ]$ (cfr. Lemma 1.13 in {\cite{Allaire1992}}). Although this
question become a simple observation in our framework, we will make constantly
use of the following result which therefore state as a proposition in order to
reference it when used.

\begin{proposition}
  \label{prop:attainedtest}The following statements hold:
  \begin{enumeratenumeric}
    \item For every $\varphi \in \mathcal{D} (\Omega)$ there exists a sequence
    of functions $(\varphi_{\varepsilon})_{\varepsilon > 0}$ of $L^2_{\sharp}
    [ \YY, H^1 (\Omega) ]$ such that $\mathcal{F}_{\varepsilon}
    (\varphi_{\varepsilon}) = \varphi$ and $\mathcal{F}_{\varepsilon} \left(
    \gradx \varphi_{\varepsilon} \right) = \gradx \varphi$ for every
    $\varepsilon > 0$, so that obviously $\varphi_{\varepsilon}
    \twoheadrightarrow \varphi$ strongly $L^2_{\sharp} [ \YY, L^2
    (\Omega) ]$ and $\gradx \varphi_{\varepsilon} \twoheadrightarrow
    \gradx \varphi$ strongly in $L^2_{\sharp} [ \YY, L^2 (\Omega)
    ]^N$.
    
    \item Similarly, for every $\psi \in \mathcal{D} \left( \Omega \times \YY
    \right)$ there exists a sequence of functions $(\psi_{\varepsilon}) \in
    L^2_{\sharp} [ \YY, H^1 (\Omega) ]$ such that
    $\mathcal{F}_{\varepsilon} (\psi_{\varepsilon}) = \psi$ and $\varepsilon
    \mathcal{F}_{\varepsilon} \left( \gradx \psi_{\varepsilon} \right) =
    \grady \psi + \varepsilon \gradx \psi$ for every $\varepsilon > 0$. In
    particular $\psi_{\varepsilon} \twoheadrightarrow \psi$ strongly in
    $L^2_{\sharp} [ \YY, L^2 (\Omega) ]$ and $\varepsilon \gradx
    \psi_{\varepsilon} \twoheadrightarrow \grady \psi$ strongly in
    $L^2_{\sharp} [ \YY, L^2 (\Omega) ]^N$.
  \end{enumeratenumeric}
\end{proposition}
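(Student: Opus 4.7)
The plan is to exploit Proposition~\ref{prop:isometricFeps}, which tells us that $\mathcal{F}_{\varepsilon}$ is an isometric isomorphism of $L^2_{\sharp} [ \YY, L^2 (\Omega) ]$ with inverse $\mathcal{F}^{\ast}_{\varepsilon}$. Both statements then reduce to producing an explicit preimage under $\mathcal{F}_{\varepsilon}$ and computing its partial gradients by the chain rule.

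For part~1, I would simply set $\varphi_{\varepsilon}(x,y) \assign \varphi(x)$, regarded as an element of $L^2_{\sharp} [ \YY, H^1 (\Omega) ]$ that is constant (hence trivially $\YY$-periodic) in the $y$-variable. Since $\mathcal{F}_{\varepsilon}$ only translates the $y$-variable, the identities $\mathcal{F}_{\varepsilon}(\varphi_{\varepsilon}) = \varphi$ and $\mathcal{F}_{\varepsilon}(\gradx \varphi_{\varepsilon}) = \gradx \varphi$ hold tautologically, and the claimed strong two-scale convergences follow immediately from the definitions.

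For part~2, I would set $\psi_{\varepsilon} \assign \mathcal{F}^{\ast}_{\varepsilon}(\psi)$, that is $\psi_{\varepsilon}(x,y) = \psi(x, y + x/\varepsilon)$. That $\psi_{\varepsilon}$ belongs to $L^2_{\sharp} [ \YY, H^1 (\Omega) ]$ is immediate from the smoothness and compact $x$-support of $\psi \in \mathcal{D}(\Omega \times \YY)$ together with the $\YY$-periodicity in $y$; by construction $\mathcal{F}_{\varepsilon}(\psi_{\varepsilon}) = \psi$. The chain rule applied to $\psi_{\varepsilon}$ yields
\[
\gradx \psi_{\varepsilon}(x,y) = (\gradx \psi)(x, y + x/\varepsilon) + \tfrac{1}{\varepsilon}(\grady \psi)(x, y + x/\varepsilon),
\]
and multiplying by $\varepsilon$ and then applying $\mathcal{F}_{\varepsilon}$ (which undoes the $x/\varepsilon$-shift in $y$) produces exactly $\varepsilon\, \mathcal{F}_{\varepsilon}(\gradx \psi_{\varepsilon}) = \grady \psi + \varepsilon \gradx \psi$. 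The two strong two-scale statements then follow at once: $\psi_{\varepsilon} \twoheadrightarrow \psi$ because $\mathcal{F}_{\varepsilon}(\psi_{\varepsilon})$ is identically $\psi$, and $\varepsilon \gradx \psi_{\varepsilon} \twoheadrightarrow \grady \psi$ because the residual $\varepsilon \gradx \psi$ tends to $0$ strongly in $L^2_{\sharp} [ \YY, L^2 (\Omega) ]^N$.

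No step here looks genuinely delicate; the whole argument is a formal manipulation with the definitions, which is exactly the kind of simplification the new framework is designed to provide. The only point worth double-checking is that the explicit formulas for $\varphi_{\varepsilon}$ and $\psi_{\varepsilon}$ really land in $L^2_{\sharp} [ \YY, H^1 (\Omega) ]$ and not merely in a weaker space, but this is clear from the regularity and compact support of the data $\varphi$ and $\psi$.
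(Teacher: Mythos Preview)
Your proof is correct and is essentially identical to the paper's own argument: the paper also takes $\varphi_{\varepsilon}(x,y)=\varphi(x)\otimes 1(y)$ for part~1 and $\psi_{\varepsilon}(x,y)=\psi(x,y+x/\varepsilon)=\mathcal{F}^{\ast}_{\varepsilon}(\psi)$ for part~2, deriving the gradient identity via the relation $\varepsilon\,\mathcal{F}_{\varepsilon}(\gradx\psi_{\varepsilon})=\grady[\mathcal{F}_{\varepsilon}(\psi_{\varepsilon})]+\varepsilon\,\gradx[\mathcal{F}_{\varepsilon}(\psi_{\varepsilon})]$ from Proposition~\ref{prop:isometricFeps}, which is just your chain-rule computation rephrased.
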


\begin{proof}
  For every $\varphi \in \mathcal{D} (\Omega)$ the constant family of
  functions defined by the position $\varphi_{\varepsilon} (x, y) \assign
  \varphi (x) \otimes 1 (y)$ is in $L^2_{\sharp} [ \YY, L^2 (\Omega)
  ]$, and is such that $\mathcal{F}_{\varepsilon}
  (\varphi_{\varepsilon}) = \varphi$. Therefore $\mathcal{F}_{\varepsilon}
  (\varphi_{\varepsilon})$ strongly converges to $\varphi$ in $L^2 \left(
  \Omega \times \YY \right)$ \ and \ $\mathcal{F}_{\varepsilon} \left( \gradx
  \varphi_{\varepsilon} \right) = \gradx \varphi$ strongly converges to
  $\gradx \varphi$ in $L^2_{\sharp} [ \YY, L^2 (\Omega) ]^N$.
  
  For the second part of the statement we note that for every $\psi \in
  \mathcal{D} \left( \Omega \times \YY \right)$ the family $\psi_{\varepsilon}
  (x, y) \assign \psi (x, y + x / \varepsilon)$ is in $L^2_{\sharp} [
  \YY, L^2 (\Omega) ]$, and is such that $\mathcal{F}_{\varepsilon}
  (\psi_{\varepsilon}) = \psi$. Hence $\psi_{\varepsilon} \twoheadrightarrow
  \psi_0$ strongly in $L^2_{\sharp} [ \YY, L^2 (\Omega) ]$.
  Moreover $\varepsilon \mathcal{F}_{\varepsilon} \left( \gradx
  \psi_{\varepsilon} \right) = \grady [\mathcal{F}_{\varepsilon}
  (\psi_{\varepsilon})] + \varepsilon \gradx [\mathcal{F}_{\varepsilon}
  (\psi_{\varepsilon})] = \grady \psi + \varepsilon \gradx \psi$ so that
  $\varepsilon \gradx \psi_{\varepsilon} \twoheadrightarrow \grady \psi$
  strongly in $L^2_{\sharp} [ \YY, L^2 (\Omega) ]$.
\end{proof}

\section{The {\guillemotleft}classical{\guillemotright} homogenization
problem}\label{sec:4}

In the mathematical literature, the elliptic equation introduced in subsection
\ref{subsec:sec1classicaltwoscale}, Eq.$\;$(\ref{eq:model-problem}), it is
nowadays simply referred to as the classical homogenization problem. This
classical problem has achieved the role of {\guillemotleft}benchmark
problem{\guillemotright} for new methods in periodic homogenization: Whenever
a new method for periodic homogenization emerges, it is customary to test it
by the ease it allows to solve the classical homogenization problem. This is
exactly the aim of this section. Of course, as pointed out in subsection
\ref{subsec:intro.newapproach}, our testing problem is slightly different as
the matrix of diffusion coefficients is now a function depending on a
parameter. Nevertheless, and this is a really important point, the homogenized
equations we get are exactly the ones arising from the homogenization of the
classical homogenization problem.

\subsection{The {\guillemotleft}classical{\guillemotright} homogenization
problem}

Let $\Omega$ be a bounded open set of $\RR^N$. Let $f$ be a given function in
$L^2 (\Omega)$. For every $y \in \YY$ we consider the following linear
second-order elliptic equation
\begin{eqnarray}
  - \divx [ A (x, y + x / \varepsilon) \gradx u_{\varepsilon} (x, y)
  ] & = \hspace{1.2em} f (x) & \hspace{1.2em} \text{in } \Omega 
  \label{eq:homogproblem2dot0feq}\\
  u_{\varepsilon} (x, y) & = \hspace{1.2em} 0 & \hspace{1.2em} \text{on }
  \partial \Omega,  \label{eq:homogproblem2dot0}
\end{eqnarray}
where $A \in [ L^{\infty} \left( \Omega \times \RR^N \right)
]^{N^2}$ is a (not necessarily symmetric) matrix valued function defined
on $\Omega \times \YY$ and $\YY$-periodic in the second variable. We also
suppose $A$ to be {\mybold{uniformly elliptic}}, i.e. there exists a positive
constants $\alpha > 0$ such that $\alpha | \xi |^2 \leqslant A (x, y) \xi
\cdot \xi$ for any $\xi \in \RR^N$ and every $(x, y) \in \Omega \times \YY$.

Following {\cite{Allaire1992}} we give the following

\begin{definition}
  \label{def:classhomandcorr}The homogenized equation is defined as
  \begin{eqnarray}
    - \tmop{div} [A_{\hom} (x) \nabla u_0 (x)] & = \; \; f (x) & \text{ in }
    \Omega  \label{eq:homogenizedpdej1}\\
    u (x) & = \hspace{1.2em} 0 & \text{ on } \partial \Omega 
  \end{eqnarray}
  where the matrix $A_{\hom}$ is given by
  \begin{equation}
    A_{\hom} = \left\langle A (x, \cdot) \left( I_N + \grady \tmmathbf{\chi}
    (x, \cdot) \right) \right\rangle_{\YY}, \label{eq:Ahom0}
  \end{equation}
  where $\tmmathbf{\chi} \assign (\chi_1, \chi_2, \ldots, \chi_N)$ is the
  so-called vector of correctors where for every $i \in \NN_N$ the function
  $\chi_i$ is the unique solution in the space $L^{\infty} [ \Omega,
  H^1_{\sharp} \left( \YY \right) / \RR ]$ of the cell problem:
  \begin{equation}
    - \divy [ A (x, y) \left( \grady \chi_i (x, y) + e_i \right) ]
    \eqs 0. \label{eq:splittedsoldistribcell}
  \end{equation}
\end{definition}

We then have

\begin{theorem}
  \label{thm:newhomapproach}For every $\varepsilon \in \RR^+$ there exists a
  unique solution $u_{\varepsilon} \in L^2_{\sharp} [ \YY, H^1_0 (\Omega)
  ]$ of the problem
  (\ref{eq:homogproblem2dot0feq})-(\ref{eq:homogproblem2dot0}).
  \begin{enumerate}
    \item The sequence $(u_{\varepsilon})_{\varepsilon \in \RR^+}$ of
    $L^2_{\sharp} [ \YY, H^1_0 (\Omega) ]$ solutions is such that
    \begin{equation}
      u_{\varepsilon} \twoheadrightarrow u_0 \quad, \quad \gradx
      u_{\varepsilon} \twoheadrightarrow \gradx u_0 + \grady u_1 \quad
      \text{in } L^2_{\sharp} [ \YY, L^2 (\Omega) ]
    \end{equation}
    where $(u_0, u_1)$ is the unique solution in $H_0^1 (\Omega) \times L^2
    [ \Omega, H^1_{\sharp} (Y) / \RR ]$ of the following two-scale
    homogenized system:
    \begin{eqnarray}
      - \divy [ A (x, y) \left( \gradx u_0 (x) + \grady u_1 (x, y)
      \right) ] & = \; \; 0 & \text{in } \Omega \times \YY, 
      \label{eq:homeq1distrib}\\
      - \divx [ \int_{\YY} A (x, y) \left( \gradx u_0 (x) + \grady u_1
      (x, y) \right) \mathd y ] & = \hspace{1.2em} f (x) & \text{in }
      \Omega \times \YY .  \label{eq:homeq2}
    \end{eqnarray}
    \item Furthermore, the previous system in equivalent to the classical
    homogenized and cell equations through the relation
    \begin{equation}
      u_1 (x, y) = \nabla u_0 (x) \cdot \tmmathbf{\chi} (x, y) .
      \label{eq:linkclassicaland2scale}
    \end{equation}
  \end{enumerate}
\end{theorem}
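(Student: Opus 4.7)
The plan is to establish existence and uniqueness by Lax--Milgram, then pass to the cell-averaging two-scale limit by rewriting the weak formulation via the cell shift operator $\mathcal{F}_\varepsilon$, and finally identify the classical homogenized equation by exploiting linearity of the cell problem.

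\textbf{Step 1: Existence, uniqueness and a priori bounds.} The bilinear form $(u,v) \mapsto \int_{\Omega\times Y} A(x,y+x/\varepsilon)\gradx u\cdot\gradx v$ is continuous and, thanks to the uniform ellipticity of $A$ together with Poincar\'e's inequality in $H_0^1(\Omega)$ (applied slice-wise in $y$), coercive on $L^2_\sharp[Y,H_0^1(\Omega)]$. The right-hand side $v \mapsto \int_{\Omega\times Y} f\, v$ is a continuous linear form. Lax--Milgram therefore yields a unique $u_\varepsilon$. Testing against $u_\varepsilon$ itself and using Poincar\'e gives the uniform bound $\|u_\varepsilon\|_{L^2_\sharp[Y,H_0^1(\Omega)]}\le C$.

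\textbf{Step 2: Extraction of two-scale limits.} By Theorem~\ref{Thm:compL2}, up to a subsequence $u_\varepsilon\twoheadrightarrow u_0$ and $\gradx u_\varepsilon\twoheadrightarrow\mathbf{v}$ in $L^2_\sharp[Y,L^2(\Omega)]$. Proposition~\ref{prop:g1classicalctwoscaleL2H1} then forces $u_0$ to be independent of $y$ and provides $u_1\in L^2[\Omega;H^1_\sharp(Y)]$ with $\mathbf{v}=\gradx u_0+\grady u_1$. The fact that $u_0\in H_0^1(\Omega)$ follows from lower semicontinuity together with the Dirichlet condition preserved under the isometry $\mathcal{F}_\varepsilon$.

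\textbf{Step 3: Passage to the limit.} Here lies the key point, and the cell-averaging framework makes it almost mechanical. Since $A$ is $Y$-periodic in the second variable, for every $g\in L^2_\sharp[Y,L^2(\Omega)]$ one has $\mathcal{F}_\varepsilon[A(x,y+x/\varepsilon)\,g](x,y)=A(x,y)\,\mathcal{F}_\varepsilon(g)(x,y)$. Applying the isometry $\mathcal{F}_\varepsilon$ to the weak formulation (and noting that $\mathcal{F}_\varepsilon(f)=f$ because $f$ does not depend on $y$) recasts it as
\begin{equation*}
\int_{\Omega\times Y} A(x,y)\,\mathcal{F}_\varepsilon(\gradx u_\varepsilon)\cdot\mathcal{F}_\varepsilon(\gradx\psi_\varepsilon)\,\mathd x\,\mathd y=\int_{\Omega\times Y} f(x)\,\mathcal{F}_\varepsilon(\psi_\varepsilon)\,\mathd x\,\mathd y.
\end{equation*}
I then take test functions $\psi_\varepsilon=\varphi_\varepsilon+\varepsilon\phi_\varepsilon$ with $\varphi\in\mathcal{D}(\Omega)$ and $\phi\in\mathcal{D}(\Omega\times Y)$, constructed via Proposition~\ref{prop:attainedtest}, so that $\mathcal{F}_\varepsilon(\psi_\varepsilon)\to\varphi$ and $\mathcal{F}_\varepsilon(\gradx\psi_\varepsilon)\to\gradx\varphi+\grady\phi$ strongly in $L^2_\sharp[Y,L^2(\Omega)]$. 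The strong-times-weak passage to the limit yields
\begin{equation*}
\int_{\Omega\times Y} A(x,y)(\gradx u_0+\grady u_1)\cdot(\gradx\varphi+\grady\phi)\,\mathd x\,\mathd y=\int_\Omega f\varphi\,\mathd x.
\end{equation*}
Specializing $\varphi\equiv 0$ yields the cell equation (\ref{eq:homeq1distrib}); specializing $\phi\equiv 0$ and integrating in $y$ yields (\ref{eq:homeq2}). Uniqueness of $(u_0,u_1)$ follows from coercivity of the limit bilinear form (ellipticity of $A$, Poincar\'e in $x$ and $\grady$-Poincar\'e modulo constants in $y$), hence the entire family, not just a subsequence, converges.

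\textbf{Step 4: Link with the classical formulation.} For fixed $x$, equation (\ref{eq:homeq1distrib}) is, coordinate by coordinate, a cell problem in $y$ driven by $\gradx u_0(x)$. By linearity of the cell problem (\ref{eq:splittedsoldistribcell}) and uniqueness in $H^1_\sharp(Y)/\RR$, the function $y\mapsto\nabla u_0(x)\cdot\boldsymbol{\chi}(x,y)$ solves it, so $u_1$ is given by (\ref{eq:linkclassicaland2scale}). Substituting this expression into (\ref{eq:homeq2}) and recognizing $A_{\hom}$ from (\ref{eq:Ahom0}) produces (\ref{eq:homogenizedpdej1}).

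The only genuine obstacle is Step~3: recognizing that the $Y$-periodicity of $A$ is precisely what turns the oscillating coefficient into a $y$-only coefficient after application of $\mathcal{F}_\varepsilon$, and then assembling test functions with the desired two-scale strong limits. Once this is in place, everything else is standard: compactness, linearity of the cell problem, and uniqueness.
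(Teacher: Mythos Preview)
Your proof is correct and follows essentially the same route as the paper: Lax--Milgram for existence and uniform bounds, extraction via Proposition~\ref{prop:g1classicalctwoscaleL2H1}, rewriting the weak formulation through $\mathcal{F}_\varepsilon$ so that the oscillating coefficient becomes $A(x,y)$, and then producing the two limit equations via the test functions of Proposition~\ref{prop:attainedtest}. The only cosmetic difference is that you package the two choices of test functions into a single family $\psi_\varepsilon=\varphi_\varepsilon+\varepsilon\phi_\varepsilon$ and pass to the limit once, whereas the paper treats the $\varphi$-test and the $\varepsilon$-scaled $\phi$-test separately; both lead to the same pair (\ref{eq:homeq1distrib})--(\ref{eq:homeq2}), and Step~4 is identical.
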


\begin{proof}
  {\tmstrong{1)}} We write the weak formulation of problem
  (\ref{eq:homogproblem2dot0feq})-(\ref{eq:homogproblem2dot0}) on the space
  $L^2_{\sharp} [ \YY ; H^1_0 (\Omega) ]$:
  \begin{equation}
    \int_{\YY \times \Omega} A (x, x / \varepsilon + y) \gradx u_{\varepsilon}
    (x, y) \cdot \gradx \psi_{\varepsilon} (x, y) \mathd x \mathd y \eqs
    \int_{\YY \times \Omega} f (x) \psi_{\varepsilon} (x, y) \mathd x \mathd
    z, \label{eq:twovarweakform}
  \end{equation}
  with $\psi_{\varepsilon} \in L^2_{\sharp} [ \YY, H^1_0 (\Omega)
  ]$. Once endowed the space $L^2_{\sharp} [ \YY, H^1_0 (\Omega)
  ]$ with the equivalent norm $u \in L^2_{\sharp} [ \YY, H^1_0
  (\Omega) ] \mapsto \left\| \gradx u \right\|^2_{\Omega \times \YY}$,
  due to Lax-Milgram theorem, for every $\varepsilon > 0$ there exists a
  unique solution $u_{\varepsilon} \in L^2_{\sharp} [ \YY, H^1_0 (\Omega)
  ]$ and moreover
  \begin{equation}
    \left\| \gradx u_{\varepsilon} \right\|_{L^2 \left( \Omega \times \YY
    \right)} \leqslant \frac{c_{\Omega}}{\alpha}  \| f \|_{L^2 (\Omega)}
    \label{eq:stabilityestimateLM}
  \end{equation}
  where we have denote by $c_{\Omega}$ the Poincar{\'e} constant for the space
  $H_0^1 (\Omega)$. As a consequence of the uniform bound (with respect to
  $\varepsilon$) expressed by (\ref{eq:stabilityestimateLM}), taking into
  thanks to the reflexivity of the space $L^2_{\sharp} [ \YY, H^1_0
  (\Omega) ]$ and Proposition \ref{prop:g1classicalctwoscaleL2H1}, there
  exists a subsequence extracted from $(u_{\varepsilon})_{\varepsilon \in
  \RR^+}$, and still denoted by $(u_{\varepsilon})_{\varepsilon \in \RR^+}$,
  such that
  \begin{equation}
    u_{\varepsilon} \twoheadrightarrow u_0 \quad, \quad \gradx u_{\varepsilon}
    \twoheadrightarrow \gradx u_0 + \grady u_1 \quad \text{in } L^2_{\sharp}
    [ \YY, L^2 (\Omega) ],
  \end{equation}
  for a suitable $u_0 \in H^1_0 (\Omega)$ and $u_1 \in L^2 [\Omega,
  H^1_{\sharp} (Y)]$.
  
  Next we note that in terms of the operator $\mathcal{F}_{\varepsilon}$, the
  previous equation (\ref{eq:twovarweakform}) reads as
  \begin{eqnarray}
    \int_{\YY \times \Omega} \mathcal{F}_{\varepsilon} \left( \gradx
    u_{\varepsilon} \right) (x, y) \cdot A^{\T} (x, y)
    \mathcal{F}_{\varepsilon} \left( \gradx \psi_{\varepsilon} \right) (x, y)
    \mathd x \mathd y & \eq & \int_{\YY \times \Omega} f (x)
    \psi_{\varepsilon} (x, y) \mathd x \mathd y \nonumber\\
    & \eq & \int_{\YY \times \Omega} f (x) \mathcal{F}_{\varepsilon}
    (\psi_{\varepsilon}) (x, y) \mathd x \mathd y . 
    \label{eq:twovarweakformFeps}
  \end{eqnarray}
  Now, we already know that $\mathcal{F}_{\varepsilon} \left( \gradx
  u_{\varepsilon} \right) \rightharpoonup \gradx u_0 + \grady u_1$ in
  $L^2_{\sharp} [ \YY, L^2 (\Omega) ]$. We then observe that (cfr.
  Proposition \ref{prop:attainedtest}) for every $\varphi \in \mathcal{D}
  (\Omega)$, there exists a sequence $\psi_{\varepsilon}$ of $L^2_{\sharp}
  [ \YY, L^2 (\Omega) ]$ functions such that
  $\mathcal{F}_{\varepsilon} (\psi_{\varepsilon}) \rightarrow \varphi$ and
  $\mathcal{F}_{\varepsilon} \left( \gradx \psi_{\varepsilon} \right)
  \rightarrow \gradx \varphi$ strongly in $L^2_{\sharp} [ \YY, L^2
  (\Omega) ]$. Therefore passing to the limit for $\varepsilon
  \rightarrow 0$ in equation (\ref{eq:twovarweakformFeps}), we get
  \begin{equation}
    \int_{\YY \times \Omega} A (x, y) \left( \gradx u_0 (x) + \grady u_1 (x,
    y)_{_{_{_{_{}}}}} \right) \cdot \gradx \varphi (x) \mathd x \mathd y \eqs
    \int_{\Omega} f (x) \varphi (x) \mathd x, \label{eq:weakformtemp1}
  \end{equation}
  which, due to the arbitrariness of $\varphi \in \mathcal{D} (\Omega)$, in
  distributional form reads as (\ref{eq:homeq2}).
  
  On the other hand, for every $\psi \in \mathcal{D} \left( \Omega \times \YY
  \right)$ there exists (cfr. Proposition \ \ref{prop:attainedtest}) a family
  $(\psi_{\varepsilon})$ of $L^2_{\sharp} [ \YY, L^2 (\Omega) ]$
  functions such that $\varepsilon \mathcal{F}_{\varepsilon} \left( \gradx
  \psi_{\varepsilon} \right) \rightarrow \grady \psi$ strongly in $L^2 \left(
  \Omega \times \YY \right)$ so that, multiplying both members of
  (\ref{eq:twovarweakformFeps}) for $\varepsilon > 0$ and passing to the limit
  for $\varepsilon \rightarrow 0$ we get
  \begin{equation}
    \int_{\YY \times \Omega} A (x, y) \left( \gradx u_0 (x) + \grady u_1 (x,
    y) \right) \cdot \grady \psi (x, y) \mathd x \mathd y = 0
    \label{eq:homeq1}
  \end{equation}
  which, due to the arbitrariness of $\psi \in \mathcal{D} (\Omega)$, in
  distributional form reads as (\ref{eq:homeq1distrib}).
  
  We have thus proved that from any extracted subsequence from
  $(u_{\varepsilon})_{\varepsilon \in \RR^+}$ it is possible to extract a
  further subsequence which two-scale convergence to the solution of the
  system of equations (\ref{eq:weakformtemp1}),(\ref{eq:homeq1}). Since the
  system of equations (\ref{eq:weakformtemp1}),(\ref{eq:homeq1}) has only one
  solution $(u_0, u_1) \in H_0^1 (\Omega) \times L^2 [ \Omega,
  H^1_{\sharp} (Y) / \RR ]$, as it is immediate to check via Lax-Milgram
  theorem, the entire sequence $(u_{\varepsilon})_{\varepsilon \in \RR^+}$
  two-scale convergence to $u_0$.
\end{proof}

\begin{proof}
  {\tmstrong{2)}} The homogenization process has led to two partial
  differential equations, namely (\ref{eq:homeq1distrib}) and
  (\ref{eq:homeq2}). Let us observe that the distributional equation
  (\ref{eq:homeq1distrib}) can be equivalently written as
  \begin{equation}
    - \divy [ A (x, y) \left. \grady u_1 (x, y) \right) ] \eqs
    \divy A (x, y) \cdot \nabla u_0 (x), \label{eq:cellvect}
  \end{equation}
  where we have denoted by $\divy A = \left( \divy A_1, \; \divy A_2, \ldots,
  \, \divy A_N \right)$ the vector whose components are the $\divy$of the
  columns $A_1, A_2, \ldots, A_N$ of $A$. It is completely standard (see
  {\cite{papanicolau1978asymptotic}}) to show that there exist a unique
  solution $u_1 \in L^2 [ \Omega, H^1_{\sharp} (Y) / \RR ]$ of the
  cell problem (\ref{eq:cellvect}). Moreover, we observe that (as consequence
  of Lax-Milgram theorem), for every $i \in \NN_N$ and for a.e. $x \in \Omega$
  there exists a unique solution $\chi_i (x, \cdot) \in H^1_{\sharp} (Y) /
  \RR$ of the distributional equation
  \begin{equation}
    - \divy [ A (x, y) \left. \grady \chi_i (x, y) \right) ] \eqs
    \divy A_i (x, y) \label{eq:splittedsoldistrib},
  \end{equation}
  and the stability estimates $\| \chi_i (x, \cdot) \|_{H^1_{\sharp} \left(
  \YY \right)} \leqslant \frac{1}{\alpha} \| A_i \|_{L^{\infty} \left( \Omega
  \times \YY \right)}$ holds a.e. in $\Omega$. Therefore for every $i \in
  \NN_N$ we have $\chi_i \in L^{\infty} [ \Omega, H^1_{\sharp} \left( \YY
  \right) / \RR ]$ so that the unique solution of
  (\ref{eq:splittedsoldistrib}) can be expressed as
  \begin{equation}
    u_1 (x, y) = \gradx u_0 (x) \cdot \tmmathbf{\chi} (x, y)
    \label{eq:explicitchi}
  \end{equation}
  with $\tmmathbf{\chi} (x, y) \assign (\chi_1 (x, y), \chi_2 (x, y), \ldots,
  \chi_N (x, y))$. After that, substituting (\ref{eq:explicitchi}) into
  (\ref{eq:homeq2}) we get the classical homogenized equation:
  \begin{eqnarray}
    f (x) & = & - \divx \left( \left\langle A (x, \cdot) \left( I_N + \grady
    \tmmathbf{\chi} (x, \cdot) \right) \right\rangle_{\YY} \gradx u_0 (x)
    \right) \nonumber\\
    & = & - \divx \left( A_{\hom} (x) \gradx u_0 (x) \right),
    \label{eq:classicalhomequation} 
  \end{eqnarray}
  with
  \begin{equation}
    A_{\hom} (x) \assign \int_{\YY} A (x, y) \left( I_N + \grady
    \tmmathbf{\chi} (x, y) \right) \mathd y. \label{eq:Ahom}
  \end{equation}
  Note that equation (\ref{eq:classicalhomequation}) is well-posed in $H_0^1
  (\Omega)$ since it is easily seen that $A_{\hom}$ is bounded and coercive
  (see {\cite{papanicolau1978asymptotic}}). The proof is complete.
\end{proof}

\section{ Strong Convergence in $H^1 (\Omega)$: A corrector
result\label{sec:correctorH1conv}}
In the classical framework of two-scale convergence, the so-called corrector
results aim to improve the convergence of the solution gradients $\gradx
u_{\varepsilon}$ by adding corrector terms. A typical corrector result has the
effect of transforming a weak convergence result into a strong one
{\cite{Allaire1992,allaire2012shape,papanicolau1978asymptotic}}. In our
context, as we shall see in a moment, the role of the corrector term is
replaced by the average over the unit cell $\YY$ of the family of solutions
$u_{\varepsilon}$ (cfr. Theorem \ref{thm:newhomapproach} for the notations).
We thus get a rigorous justification of the two first term in the asymptotic
expansion (\ref{eq:asymptexpansion}) of the solution $u_{\varepsilon}$ of the
homogenization problem.

\begin{theorem}
  \label{thm:corrH1}For every $\varepsilon \in \RR^+$ let $u_{\varepsilon} \in
  L^2_{\sharp} [ \YY, H^1_0 (\Omega) ]$ be the unique solution of
  the homogenization problem
  (\ref{eq:homogproblem2dot0feq})-(\ref{eq:homogproblem2dot0}), and $(u_0,
  u_1) \in H_0^1 (\Omega) \times L^2 [ \Omega, H^1_{\sharp} (Y) / \RR
  ]$ the unique solution of the homogenized system of equations
  (\ref{eq:homeq1distrib})-(\ref{eq:homeq2}). Then for $\varepsilon
  \rightarrow 0$ we have
  \begin{equation}
    \left\| \langle u_{\varepsilon} \rangle_{\YY} - u_0 \right\|_{H^1
    (\Omega)} \rightarrow 0. \label{eq:strongconvh1}
  \end{equation}
  In particular $\left\langle \gradx u_{\varepsilon} \right\rangle_{\YY} -
  \gradx u_0 \rightarrow 0$ strongly in $L^2 (\Omega)$.
\end{theorem}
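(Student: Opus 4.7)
The plan is to upgrade the weak two-scale convergence of $\gradx u_{\varepsilon}$ to a strong one, and then pass to $\YY$-averages. More precisely, I would prove that $\gradx u_{\varepsilon} - w_{\varepsilon} \to 0$ strongly in $L^2_{\sharp}[\YY, L^2(\Omega)]^N$, where
$$w_{\varepsilon}(x, y) \assign \mathcal{F}_{\varepsilon}^{\ast}(\gradx u_0 + \grady u_1)(x, y) = \gradx u_0(x) + \grady u_1(x, y + x/\varepsilon),$$
so that $\mathcal{F}_{\varepsilon}(w_{\varepsilon}) = \gradx u_0 + \grady u_1$ identically. Since $\grady u_1$ has zero $\YY$-average, taking $\langle\,\cdot\,\rangle_{\YY}$ will give $\langle\gradx u_{\varepsilon}\rangle_{\YY} \to \gradx u_0$ strongly in $L^2(\Omega)^N$. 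Because $\langle u_{\varepsilon}\rangle_{\YY} - u_0 \in H_0^1(\Omega)$ and averaging over $\YY$ commutes with $\gradx$, Poincaré's inequality will then upgrade this to $\|\langle u_{\varepsilon}\rangle_{\YY} - u_0\|_{H^1(\Omega)} \to 0$.

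The core of the argument is a Tartar-type energy identity adapted to the cell-averaging framework. By the uniform coercivity of $A$,
$$I_{\varepsilon} \assign \int_{\Omega\times\YY} A(x, y+x/\varepsilon)(\gradx u_{\varepsilon} - w_{\varepsilon})\cdot(\gradx u_{\varepsilon} - w_{\varepsilon})\,\mathd x\,\mathd y \geq \alpha\|\gradx u_{\varepsilon} - w_{\varepsilon}\|_{L^2(\Omega\times\YY)^N}^2,$$
so it suffices to show $I_{\varepsilon} \to 0$. Expanding $I_{\varepsilon}$ produces four terms. The diagonal one equals $\int_{\Omega}f\langle u_{\varepsilon}\rangle_{\YY}\,\mathd x$ by testing the weak formulation (\ref{eq:twovarweakform}) against $u_{\varepsilon}$ itself; the weak convergence $u_{\varepsilon}\twoheadrightarrow u_0$ implies $\langle u_{\varepsilon}\rangle_{\YY}\rightharpoonup u_0$ in $L^2(\Omega)$, so this term converges to $\int_{\Omega}f u_0\,\mathd x$, which in turn equals $E \assign \int_{\Omega\times\YY} A(x,y)(\gradx u_0 + \grady u_1)\cdot(\gradx u_0 + \grady u_1)\,\mathd x\,\mathd y$ by testing the homogenized system (\ref{eq:homeq1distrib})-(\ref{eq:homeq2}) against the pair $(u_0, u_1)$.

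For the two mixed terms and the pure-corrector term, the isometric character of $\mathcal{F}_{\varepsilon}$ established in Proposition~\ref{prop:isometricFeps} is essential. A change of variable $y \mapsto y + x/\varepsilon$ gives, for example,
$$\int_{\Omega\times\YY} A(x, y+x/\varepsilon)\gradx u_{\varepsilon}\cdot w_{\varepsilon}\,\mathd x\,\mathd y \eqs \int_{\Omega\times\YY} A(x,y)\mathcal{F}_{\varepsilon}(\gradx u_{\varepsilon})\cdot(\gradx u_0 + \grady u_1)\,\mathd x\,\mathd y,$$
and, since $A^{\T}(\gradx u_0 + \grady u_1) \in L^2(\Omega\times\YY)^N$ is a fixed function while $\mathcal{F}_{\varepsilon}(\gradx u_{\varepsilon})\rightharpoonup \gradx u_0 + \grady u_1$ weakly, this term converges to $E$. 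The symmetric mixed term is handled identically, while the pure-corrector term $\int A(x, y+x/\varepsilon)w_{\varepsilon}\cdot w_{\varepsilon}$ equals $E$ for every $\varepsilon > 0$ (no limit needed) by the same change of variable. Summing, $I_{\varepsilon} \to E - E - E + E = 0$.

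The main obstacle I anticipate is precisely this energy computation: because $A$ is not assumed symmetric, all four expansion terms must be handled separately, and the argument relies crucially on the fact that $\mathcal{F}_{\varepsilon}^{\ast}(\gradx u_0 + \grady u_1)$ is a \emph{bona fide} element of $L^2_{\sharp}[\YY, L^2(\Omega)]^N$ for any $u_1 \in L^2[\Omega, H^1_{\sharp}(\YY)/\RR]$, with no further regularity required — this is exactly the advantage the cell-averaging viewpoint provides over the classical approach, where one must first approximate $u_1$ by admissible test functions. Once $I_{\varepsilon} \to 0$ is proven, averaging over $\YY$ yields $\langle w_{\varepsilon}\rangle_{\YY} = \gradx u_0$, since $\int_{\YY}\grady u_1(x, y+x/\varepsilon)\,\mathd y = 0$ by $\YY$-periodicity; hence $\langle\gradx u_{\varepsilon}\rangle_{\YY} \to \gradx u_0$ strongly in $L^2(\Omega)^N$, and Poincaré's inequality applied to $\langle u_{\varepsilon}\rangle_{\YY} - u_0 \in H_0^1(\Omega)$ closes the proof.
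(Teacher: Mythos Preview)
Your proof is correct and follows essentially the same route as the paper's: both expand the $A$-weighted energy of $\mathcal{F}_{\varepsilon}(\gradx u_{\varepsilon}) - (\gradx u_0 + \grady u_1)$, identify the diagonal term with $\int f\,u_{\varepsilon}$ via the weak formulation, use the homogenized system to rewrite the pure-corrector term as $\int f\,u_0$, and pass to the limit in the cross terms by weak two-scale convergence against the fixed $L^2$ function $A(\gradx u_0+\grady u_1)$ (respectively $A^{\T}(\gradx u_0+\grady u_1)$). The only cosmetic differences are that you name $w_{\varepsilon}=\mathcal{F}_{\varepsilon}^{\ast}(\gradx u_0+\grady u_1)$ and start in the unshifted variables before the change of variable, and that you keep the two mixed terms separate rather than grouping them via $A+A^{\T}$; the paper works directly in the shifted variables and bounds $\|\langle\gradx u_{\varepsilon}\rangle_{\YY}-\gradx u_0\|_{L^2(\Omega)}$ from the outset, but the underlying computation is the same.
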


\begin{remark}
  Let us recall that in the classical setting and under some more restrictive
  assumptions on the matrix $A$ and on the regularity of the homogenized
  solution $u_0$, it is possible to prove (cfr.
  {\cite{allaire1999boundary,papanicolau1978asymptotic}} that $\|
  u_{\varepsilon} (x) - u (x) - \varepsilon u_1 (x, x / \varepsilon) \|_{H^1
  (\Omega)} \in \mathcal{O} \left( \sqrt{\varepsilon} \right)$. This estimate,
  although generically optimal, is considered to be surprising since one could
  expect to get $\mathcal{O} (\varepsilon)$ if the next order term in the
  ansatz was truly $\varepsilon^2 u_2 (x, x / \varepsilon)$. As is well known,
  this worse-than-expected result is due to the appearance of boundary
  correctors, which must be taken into account to have $\mathcal{O}
  (\varepsilon)$ estimates. On the other hand, in our framework this this
  phenomenon disappears because of $\langle u_1 \rangle_{\YY} = 0$. Indeed, in
  the average, the {\guillemotleft}classical{\guillemotright} first order
  corrector term $u_1$ does not play any role in the asymptotic expansion of
  $u_{\varepsilon}$ given by (\ref{eq:asymptexpansion}), and as we shall see
  in the next section, the first order significant (not null average)
  corrector is the so-called boundary corrector $v_{\varepsilon}$ (cfr.
  {\cite{allaire1999boundary}} and next section), for which we get the more
  natural result $\left\| \left\langle \gradx u_{\varepsilon} - \gradx u_0 -
  \varepsilon \gradx v_{\varepsilon} \right\rangle_{\YY} \right\|_{L^2
  (\Omega)} \in \mathcal{O} (\varepsilon)$.
\end{remark}

\begin{proof}
  Let us observe that using $u_0$ and $u_1$ as test functions in
  (\ref{eq:weakformtemp1}) and (\ref{eq:homeq1}) we get
  \begin{eqnarray}
    \int_{\YY \times \Omega} A (x, y) \left( \gradx u_0 (x) + \grady u_1 (x,
    y) \right) \cdot \grady u_1 (x, y) \mathd x \mathd y & = & 0 
    \label{eq:tempinv1}\\
    \int_{\YY \times \Omega} A (x, y) [ \gradx u_0 (x) + \grady u_1 (x,
    y) ] \cdot \gradx u_0  (x) \mathd x \mathd y & = & \int_{\Omega} f
    (x) u_0 (x) \mathd x.  \label{eq:tempinv2}
  \end{eqnarray}
  We then observe that ($\alpha$ is the ellipticity constant of the matrix
  $A$) for any $u \in L^2_{\sharp} [ \YY, L^2 (\Omega) ]^{}$ one
  has $\left\| \langle u \rangle_{\YY} \right\|_{\Omega} \leqslant \| u
  \|_{\Omega \times \YY}$ and hence, since $\langle u_1 \rangle_{\YY} = 0$ we
  have
  \begin{eqnarray}
    \alpha \left\| \left\langle \gradx u_{\varepsilon} \right\rangle_{\YY} -
    \gradx u_0 \right\|^2_{L^2 (\Omega)} & \eqs & \alpha \left\| \left\langle
    \mathcal{F}_{\varepsilon} \left( \gradx u_{\varepsilon} \right) - \grady
    u_1 - \gradx u_0 \right\rangle_{\YY} \right\|^2_{L^2 (\Omega)} \\
    & \leqslant & \alpha \int_{\Omega \times \YY} \left|
    \mathcal{F}_{\varepsilon} \left( \gradx u_{\varepsilon} \right) - \left(
    \gradx u_0 + \grady u_1 \right) \right|^2 .  \label{eq:corrtemp1}
  \end{eqnarray}
  By the uniformly ellipticity of $A$ and (\ref{eq:corrtemp1}) we continue to
  estimate
  \begin{eqnarray}
    \alpha \left\| \left\langle \gradx u_{\varepsilon} \right\rangle_{\YY} -
    \gradx u_0 \right\|^2_{L^2 (\Omega)} & \leqslant & \int_{\Omega \times
    \YY} A\mathcal{F}_{\varepsilon} \left( \gradx u_{\varepsilon} \right)
    \cdot \mathcal{F}_{\varepsilon} \left( \gradx u_{\varepsilon} \right)
    \nonumber\\
    &  & \qquad + \int_{\Omega \times \YY} A \left( \gradx u_0 + \grady u_1
    \right) \cdot \left( \gradx u_0 + \grady u_1 \right) \nonumber\\
    &  & \hspace{4em} - \int_{\Omega \times \YY} \mathcal{F}_{\varepsilon}
    \left( \gradx u_{\varepsilon} \right) \cdot \left( A + A^{\T} \right)
    \left( \grady u_1 + \gradx u_0 \right) \\
    & = & \int_{\Omega \times \YY} f (x) u_{\varepsilon} (x, y) \mathd x
    \mathd y + \int_{\Omega \times \YY} A \left( \gradx u_0 + \grady u_1
    \right) \cdot \left( \gradx u_0 + \grady u_1 \right) \nonumber\\
    &  & \hspace{4em} - \int_{\Omega \times \YY} \mathcal{F}_{\varepsilon}
    \left( \gradx u_{\varepsilon} \right) \cdot \left( A + A^{\T} \right)
    \left( \grady u_1 + \gradx u_0 \right), 
  \end{eqnarray}
  the second equality being a consequence of the fact that $u_{\varepsilon}$
  is the solution of the problem
  (\ref{eq:homogproblem2dot0feq})-(\ref{eq:homogproblem2dot0}). Taking into
  account (\ref{eq:tempinv1}) and (\ref{eq:tempinv2}) we then get
  \begin{eqnarray}
    \alpha \left\| \left\langle \gradx u_{\varepsilon} \right\rangle_{\YY} -
    \gradx u_0 \right\|^2_{L^2 (\Omega)} & \leqslant & \int_{\Omega \times
    \YY} f (x) u_{\varepsilon} (x, y) \mathd x \mathd y + \int_{\Omega} f (x)
    u_0 (x) \mathd x \nonumber\\
    &  & \quad - \int_{\Omega \times \YY} \mathcal{F}_{\varepsilon} \left(
    \gradx u_{\varepsilon} \right) \cdot \left( A + A^{\T} \right) \left(
    \grady u_1 + \gradx u_0 \right) . 
  \end{eqnarray}
  Since $(A + A^T) \left( \grady u_1 + \gradx u_0 \right) \in L^2_{\sharp}
  [ \YY, L^2 (\Omega) ]$, it is a test function for the two-scale
  convergence, so that (again from (\ref{eq:tempinv1}) and
  (\ref{eq:tempinv2}))
  \begin{eqnarray}
    - \lim_{\varepsilon \rightarrow 0} \int_{\Omega \times \YY}
    \mathcal{F}_{\varepsilon} \left( \gradx u_{\varepsilon} \right) \cdot
    \left( A + A^{\T} \right) \left( \grady u_1 + \gradx u_0 \right)  & = & -
    2 \int_{\Omega \times \YY} A \left( \gradx u_0 + \grady u_1 \right) \cdot
    \left( \gradx u_0 + \grady u_1 \right)  \nonumber\\
    & = & - 2 \int_{\Omega} f (x) u_0 (x) \mathd x. 
  \end{eqnarray}
  Finally, to infer (\ref{eq:strongconvh1}), we simply observe that due to the
  $\YY$ periodicity of $u_{\varepsilon}$ one has
  \begin{equation}
    \int_{\Omega \times \YY} f (x) u_{\varepsilon} (x, y) \mathd x \mathd y
    \eqs \int_{\Omega \times \YY} f (x) \mathcal{F} (u_{\varepsilon}) (x, y)
    \mathd x \mathd y
  \end{equation}
  with $u_{\varepsilon} \twoheadrightarrow u_0$. The proof is completed.
\end{proof}

\section{Higher Order Correctors: Boundary Layers}\label{sec:6bl}

In what follows assume that the matrix of diffusion coefficients $A$ is
symmetric and depends on the {\guillemotleft}periodic
variable{\guillemotright} only, i.e. $A \in L^{\infty}_{\sharp} \left( \YY
\right)$, $A = A^{\T}$ and of course $A$ uniformly elliptic with $\alpha > 0$
as constant of ellipticity. By the uniqueness of the solution of the cell
problem (\ref{eq:splittedsoldistribcell}) it is easily seen that in these
hypotheses also the vector of correctors (see Definition
\ref{def:classhomandcorr}) depends on the {\guillemotleft}periodic
variable{\guillemotright} only, i.e. $\tmmathbf{\chi} \in [ H^1_{\sharp}
\left( \YY \right) ]^N$.

In the previous section (see Theorem \ref{thm:corrH1}) we have seen that the
sequence of the averaged solutions $\langle u_{\varepsilon} \rangle_{\YY}$
strongly converge to $u_0$ in $H^1 (\Omega)$, i.e. that $\left\| \left\langle
\gradx u_{\varepsilon} - \gradx u_0 \right\rangle \right\|_{H^1 (\Omega)} \in
\mathcal{O} (1)$. To have higher order estimates, especially near the boundary
of $\Omega$, one has to introduce supplementary terms, called {\tmem{boundary
layers}} {\cite{lions1981some}}, which roughly speaking aim to compensate the
fast oscillation of the family of solutions $u_{\varepsilon}$ near the
boundary $\partial \Omega$. More precisely, in this section we show that under
suitable hypotheses one has
\begin{equation}
  \left\| \left\langle \gradx u_{\varepsilon} - \gradx u_0 - \varepsilon
  \gradx v_{\varepsilon} \right\rangle_{\YY} \right\|_{L^2 (\Omega)} \in
  \mathcal{O} (\varepsilon), \label{eq:higherordercorr1}
\end{equation}
where $v_{\varepsilon}$ is the solution of the boundary layer problem:
\begin{eqnarray}
  \divx \left( A \left( y + \frac{x}{\varepsilon} \right) \gradx
  v_{\varepsilon} (x, y) \right) & = \hspace{1.2em} \; 0 & \text{ in } \Omega
  \times \YY  \label{eq:boundarylayer1D}\\
  v_{\varepsilon} (x, y) & = \hspace{1.2em} \; u_1 (x, y + x / \varepsilon) &
  \text{ on } \partial \Omega \times \YY .  \label{eq:boundarylayer1Dbc}
\end{eqnarray}
We also investigate the validity of the following stronger estimate
\begin{equation}
  \left\| \left\langle \gradx u_{\varepsilon} - \gradx u_0 \right\rangle_{\YY}
  \right\|_{L^2 (\Omega)} \in \mathcal{O} (\varepsilon) .
  \label{eq:higherordercorr2}
\end{equation}
Quite remarkably, as we are going to show in the next subsection, in the
one-dimensional case the stronger estimate (\ref{eq:higherordercorr2}) holds
under the same hypotheses of the weaker estimate (\ref{eq:higherordercorr1}).

\subsection{Higher Order Correctors in dimension one}

In the one-dimensional setting $\YY = [0, 1]$ and $\Omega \subseteq \RR$ is an
open interval: $\Omega \assign (0, \omega)$ with $\omega > 0$. We then denote
by $a \in L^{\infty}_{\sharp} \left( \YY \right)$ the unique coefficient of
the matrix valued function $A$. Finally for the generic {\guillemotleft}1D
function{\guillemotright} function $u \in L^2_{\sharp} [ \YY, H^1_0
(\Omega) ]$ we shall denote by $u' \in L^2_{\sharp} [ \YY, L^2
(\Omega) ]$ the weak derivative with respect to the $x$ variable.

\begin{theorem}
  \label{thm:BoundaryLayers1D}Let $(u_0, u_1) \in H_0^1 (\Omega) \times L^2
  [ \Omega, H^1_{\sharp} (Y) / \RR ]$ be the unique solution of the
  homogenized system of equations (\ref{eq:homeq1distrib})-(\ref{eq:homeq2}).
  The following estimate holds
  \begin{equation}
    \left\| \langle u'_{\varepsilon} \rangle_{\YY} - u_0' \right\|_{L^2
    (\Omega)} \leqslant 2 \varepsilon \cdot \frac{| a |_{\infty} |
    \chi_{\infty} |}{\alpha} (| u_0' |_{\infty} + \| u_0'' \|_{L^2 (\Omega)})
    . \label{eq:1Dstrongestimate}
  \end{equation}
\end{theorem}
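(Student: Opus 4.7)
The strategy is to exploit two features unique to the one-dimensional setting: both the equation for $u_\varepsilon$ and for $u_0$ can be integrated once in $x$ to first-order ODEs, and the cell problem degenerates to the algebraic identity $a(y)(1 + \chi'(y)) = a_{\hom}$. Setting $F(x) := \int_0^x f(s)\,\mathd s$, the first integrals read $a(y + x/\varepsilon)\,u'_\varepsilon(x,y) = \lambda_\varepsilon(y) - F(x)$ and $a_{\hom}\,u_0'(x) = \lambda_0 - F(x)$, where the function $\lambda_\varepsilon(y)$ and the constant $\lambda_0$ are fixed by the respective Dirichlet boundary conditions through $\int_0^\omega u'_\varepsilon(x,y)\,\mathd x = 0$ and $\int_0^\omega u_0'(x)\,\mathd x = 0$.

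Combining the two first integrals with the rewritten cell identity $u_0'(x)\,(1 + \chi'(y + x/\varepsilon)) = (\lambda_0 - F(x))/a(y + x/\varepsilon)$ yields the key pointwise relation
\[
u'_\varepsilon(x,y) - u_0'(x)\bigl(1 + \chi'(y + x/\varepsilon)\bigr) \;=\; \frac{\lambda_\varepsilon(y) - \lambda_0}{a(y + x/\varepsilon)}.
\]
Averaging over $y \in \YY$ and using $\int_0^1 \chi'(y + x/\varepsilon)\,\mathd y = 0$ by periodicity, the leading oscillation cancels and the problem is reduced to establishing an $O(\varepsilon)$ pointwise bound on $|\lambda_\varepsilon(y) - \lambda_0|$.

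The main obstacle is precisely this $O(\varepsilon)$ bound on $\lambda_\varepsilon - \lambda_0$. The two Dirichlet constraints yield the explicit quotient representation with denominator $\int_0^\omega 1/a(y + x/\varepsilon)\,\mathd x$ (bounded below by $\omega/|a|_\infty$) and numerator $\int_0^\omega (F(x) - \lambda_0)/a(y + x/\varepsilon)\,\mathd x$. The crucial observation — and the reason why $\chi$ itself appears in the bound — is that the algebraic cell identity makes $\chi/a_{\hom}$ a \emph{periodic} antiderivative of the oscillation $1/a - \langle 1/a\rangle_\YY$. Since $F - \lambda_0$ has zero mean over $\Omega$, an integration by parts against this antiderivative produces the factor $\varepsilon$ together with boundary terms $(F - \lambda_0)\,\chi$ at $x = 0, \omega$ and a volume term $\int_0^\omega f\,\chi(y + \cdot/\varepsilon)\,\mathd x$. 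Using $F - \lambda_0 = -a_{\hom}\,u_0'$ at the endpoints and $f = -a_{\hom}\,u_0''$ in $\Omega$, the factor $a_{\hom}$ cancels and the numerator is controlled by a constant multiple of $\varepsilon\,|\chi_\infty|\bigl(|u_0'|_\infty + \|u_0''\|_{L^2(\Omega)}\bigr)$.

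To conclude, I would use $a \geq \alpha$ to estimate the outer $y$-integral in the displayed identity and then take the $L^2(\Omega)$-norm in $x$, recovering (\ref{eq:1Dstrongestimate}). A conceptually pleasing feature of this argument is the double role played by $\chi$: first through the algebraic cell identity, which makes $\langle \chi'\rangle_\YY = 0$ and thereby eliminates the leading oscillation; and second as the periodic antiderivative of the coefficient oscillation, furnishing both the $\varepsilon$-gain and the factor $|\chi_\infty|$. This double role is also what blocks a naive extension of the argument to higher dimensions, where no such algebraic cell identity is available.
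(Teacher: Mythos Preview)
Your argument is correct and takes a genuinely different route from the paper. The paper introduces the boundary-layer function $v_\varepsilon$ (solving $(a(y+x/\varepsilon)v_\varepsilon')'=0$ in $\Omega$ with $v_\varepsilon=u_1(x,y+x/\varepsilon)$ on $\partial\Omega$) and defines the error $e_\varepsilon=u_\varepsilon-u_0-\varepsilon[u_1(x,y+x/\varepsilon)-v_\varepsilon]$, which belongs to $L^2_\sharp[\YY,H^1_0(\Omega)]$; it then shows that $e_\varepsilon$ satisfies $-(a\,e_\varepsilon')'=\varepsilon F_\varepsilon'$ with $F_\varepsilon=a\chi u_0''$, tests against $e_\varepsilon$ itself to obtain an $\mathcal{O}(\varepsilon)$ energy bound on $e_\varepsilon'$, and finishes with a separate uniform bound on $v_\varepsilon'$ found by integrating its ODE directly. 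You bypass $v_\varepsilon$ altogether: integrating both equations once and subtracting reduces everything to controlling the difference of integration constants $\lambda_\varepsilon(y)-\lambda_0$, and the identity $(\chi/a_{\hom})'=1/a-1/a_{\hom}$ furnishes the factor $\varepsilon$ through a single integration by parts. Your proof is more elementary and makes the purely one-dimensional mechanism (the algebraic cell identity and $\chi$ as periodic primitive) completely explicit; the paper's version is deliberately organized as a warm-up for the energy-and-boundary-layer machinery reused in the $N$-dimensional Theorem~\ref{thm:BoundaryLayersND}, where neither first integrals nor such an antiderivative are available. A minor point: your final constant acquires a harmless dependence on $|\Omega|$ via Cauchy--Schwarz on the volume term $\int_\Omega u_0''\,\chi(y+\cdot/\varepsilon)$, so it does not match (\ref{eq:1Dstrongestimate}) exactly; the paper's own constant-tracking in the two lemmas is equally loose in this respect.
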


We will need the following two lemmas

\begin{lemma}
  For any $\varepsilon > 0$ let $u_{\varepsilon} \in L^2_{\sharp} [ \YY,
  H^1_0 (\Omega) ]$ be the unique solution of the problem
  (\ref{eq:homogproblem2dot0feq})-(\ref{eq:homogproblem2dot0}). Define the
  {\mybold{error function}}
  \begin{equation}
    e_{\varepsilon} (x, y) \assign u_{\varepsilon} (x, y) - u_0 (x) -
    \varepsilon \left. [ u_1 \left( x, y + \frac{x}{\varepsilon} \right)
    - v_{\varepsilon} (x, y) \right. ],
  \end{equation}
  where $v_{\varepsilon} \in L^2_{\sharp} [ \YY, H^1 (\Omega) ]$ is
  the unique solution of the boundary layer problem
  (\ref{eq:boundarylayer1D})-(\ref{eq:boundarylayer1Dbc}). The following
  estimate holds:
  \begin{equation}
    \| e_{\varepsilon}' \|_{L^2 \left( \Omega \times \YY \right)} \leqslant
    \varepsilon \cdot \frac{| a |_{\infty} | \chi_{\infty} |}{\alpha} \| u_0''
    \|_{L^2 (\Omega)} \in \mathcal{O} (\varepsilon) .
    \label{eq:fundest1derrorfunction1}
  \end{equation}
\end{lemma}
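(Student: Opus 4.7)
The plan is to derive a PDE satisfied by $e_\varepsilon$, test its weak formulation against $e_\varepsilon$ itself, and close the estimate by uniform ellipticity. Two ingredients specific to dimension one make the argument transparent: the admissibility of $e_\varepsilon$ as a test function, and a pointwise identity for the corrector.

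First I would check that $e_\varepsilon \in L^2_\sharp [ \YY, H^1_0(\Omega) ]$. Both $u_\varepsilon$ and $u_0$ vanish on $\partial \Omega$, and by construction $v_\varepsilon(x,y) = u_1(x, y + x/\varepsilon)$ on $\partial \Omega \times \YY$, so the correction $\varepsilon[u_1(x, y + x/\varepsilon) - v_\varepsilon(x, y)]$ vanishes there as well. A second preliminary observation is the one-dimensional identity $a(y)(1+\chi'(y)) \equiv a_{\hom}$: in 1D the cell equation $[a(y)(1+\chi'(y))]' = 0$ forces this quantity to be constant in $y$, and integrating over $\YY$ identifies the constant as $a_{\hom}$. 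Setting $g \assign y + x/\varepsilon$ and using $u_1(x,y) = u_0'(x)\chi(y)$, so that $\varepsilon \partial_x[u_1(x, g)] = u_0'(x) \chi'(g) + \varepsilon u_0''(x) \chi(g)$, a direct computation combining the equations satisfied by $u_\varepsilon$, $u_0$, $u_1$ and $v_\varepsilon$ (together with the identity $a(g)(1 + \chi'(g)) \equiv a_{\hom}$, the boundary-layer equation $\partial_x[a_\varepsilon \partial_x v_\varepsilon] = 0$, and the homogenized equation $-a_{\hom} u_0'' = f$) produces
\begin{equation*}
-\partial_x \bigl[a_\varepsilon(x,y) \partial_x e_\varepsilon(x,y)\bigr] \eqs -[a\chi]'(g) u_0''(x) - \varepsilon a(g) \chi(g) u_0'''(x),
\end{equation*}
with $a_\varepsilon(x,y) \assign a(y + x/\varepsilon)$.

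The main obstacle is that the right-hand side contains $u_0'''$, which under the natural regularity $u_0 \in H^2(\Omega)$ (supplied by the homogenized equation with $f \in L^2$) exists only as a distribution. The remedy is to use $e_\varepsilon$ itself as test function (admissible by the boundary-condition check above) and to integrate by parts in $x$ the $u_0'''$-term: since $\varepsilon \partial_x[a(g)\chi(g) e_\varepsilon] = [a\chi]'(g) e_\varepsilon + \varepsilon a(g)\chi(g) e_\varepsilon'$, the contribution $[a\chi]'(g) u_0''(x) e_\varepsilon$ produced by the integration by parts cancels exactly with the first term on the right-hand side, leaving the clean identity
\begin{equation*}
\int_{\Omega \times \YY} a_\varepsilon (e_\varepsilon')^2 \, \mathd x \, \mathd y \eqs - \varepsilon \int_{\Omega \times \YY} a(g) \chi(g) u_0''(x) e_\varepsilon'(x, y) \, \mathd x \, \mathd y.
\end{equation*}

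Finally, bounding the left-hand side from below by $\alpha \|e_\varepsilon'\|_{L^2(\Omega \times \YY)}^2$ via the uniform ellipticity $a_\varepsilon \geq \alpha$, and the right-hand side from above by $\varepsilon |a|_\infty |\chi|_\infty \|u_0''\|_{L^2(\Omega)} \|e_\varepsilon'\|_{L^2(\Omega \times \YY)}$ via Cauchy--Schwarz (recalling $|\YY| = 1$), I would divide through by $\|e_\varepsilon'\|_{L^2(\Omega \times \YY)}$ to conclude.
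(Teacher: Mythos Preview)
Your proof is correct and follows essentially the same route as the paper: derive the equation satisfied by $e_\varepsilon$, test against $e_\varepsilon$ (using that the boundary layer $v_\varepsilon$ makes $e_\varepsilon$ vanish on $\partial\Omega$), and close via ellipticity and Cauchy--Schwarz. The only difference is presentational: the paper recognizes at once that the right-hand side is already in divergence form, writing $-\partial_x[a_\varepsilon e_\varepsilon'] = \varepsilon\,\partial_x F_\varepsilon$ with $F_\varepsilon(x,y)=a(y+x/\varepsilon)\chi(y+x/\varepsilon)u_0''(x)$, and passes directly to the variational identity without ever invoking $u_0'''$; your detour through $u_0'''$ and the subsequent integration by parts simply undoes an unnecessary expansion. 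Note also a sign slip in your displayed PDE (the right-hand side should be $+[a\chi]'(g)u_0''+\varepsilon a(g)\chi(g)u_0'''$), though your final identity is correct.
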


\begin{proof}
  In the 1D setting, the homogenized equation (\ref{eq:homogenizedpdej1}) read
  as $a_{\hom} u''_0 (x) = - f (x)$ with $a_{\hom} \assign \langle a^{- 1}
  (\cdot) \rangle^{- 1}_{\YY} > 0$ and therefore $u_0 \in H_0^2 (\Omega)$.
  Indeed, as a consequence of Theorem \ref{thm:newhomapproach} (see eq.
  (\ref{eq:linkclassicaland2scale})), the unique solution $u_1 \in L^2 [
  \Omega, H^1_{\sharp} (Y) / \RR ]$ of (\ref{eq:homeq1distrib}) can be
  expressed in the tensor product form $u_1 (x, y) = \chi (y) u_0' (x)$, where
  $\chi$ is the unique (null average) solution in $H^1_{\sharp} \left( \YY
  \right) / \RR$ of (\ref{eq:splittedsoldistribcell}). A direct integration of
  the cell equation (\ref{eq:splittedsoldistribcell}) leads to (taking into
  account the periodicity of $u_1$ and averaging over {\YY}) $a (y) (1 +
  \partial_y \chi (y)) \eqs a_{\hom}$ with $a_{\hom} \assign \langle a (y) (1
  + \partial_y \chi (y)) \rangle_{\YY} = \langle a^{- 1} (\cdot) \rangle^{-
  1}_{\YY}$.
  
  Since $a_{\hom} u''_0 (x) = - f (x)$ from (\ref{eq:homogproblem2dot0feq}) we
  get $[a_{\hom} u_0' (x)]' = [ a \left( y + \frac{x}{\varepsilon}
  \right) u_{\varepsilon}' (x, y) ]'$. Hence, taking into account the
  equation satisfied by $v_{\varepsilon}$, a direct computation shows that for
  a.e. $y \in \YY$ the function $e_{\varepsilon} (\cdot, y)$ satisfies the
  distributional equation
  \begin{equation}
    - \left( a \left( y + \frac{x}{\varepsilon} \right) e_{\varepsilon}' (x,
    y) \right)' \eqs \varepsilon F_{\varepsilon}' (x, y) \quad \lo{\tmop{in}}
    \mathcal{D}' (\Omega), \label{eq:fundequality1Deps}
  \end{equation}
  with $F_{\varepsilon} (x, y) \assign a \left( y + \frac{x}{\varepsilon}
  \right)_{_{_{_{_{}}}}} \chi \left( y + \frac{x}{\varepsilon} \right) u_0''
  (x)$. For every $\varphi_{\varepsilon} \in L^2_{\sharp} [ \YY, H^1_0
  (\Omega) ]$, the variational form in $L^2_{\sharp} [ \YY, H^1_0
  (\Omega) ]$ of (\ref{eq:fundequality1Deps}) reads as
  \begin{equation}
    \int_{\Omega \times \YY} a \left( y + \frac{x}{\varepsilon} \right)
    e_{\varepsilon}' (x, y) \cdot \varphi_{\varepsilon}' (x, y) \mathd x
    \mathd y \eqs - \varepsilon \int_{\Omega \times \YY} F_{\varepsilon} (x,
    y) \cdot \varphi_{\varepsilon}' (x, y) \mathd x \mathd y.
    \label{eq:var1Derrorfunction}
  \end{equation}
  Since $e_{\varepsilon} \in L^2_{\sharp} [ \YY, H^1_0 (\Omega) ]$
  evaluating the variational equation (\ref{eq:var1Derrorfunction}) on the
  test function $\varphi_{\varepsilon} (x, y) \assign e_{\varepsilon} (x, y)$
  and recalling that $a \geqslant \alpha$ we finish with
  (\ref{eq:fundest1derrorfunction1}).
\end{proof}

\begin{lemma}
  \label{lemma:uniformboundveps}Let $v_{\varepsilon} \in L^2_{\sharp} [
  \YY, H^1 (\Omega) ]$ solve the boundary value problem
  (\ref{eq:boundarylayer1D})-(\ref{eq:boundarylayer1Dbc}). Then the following
  uniform estimate (with respect to $\varepsilon$) holds:
  \begin{equation}
    \| v_{\varepsilon}' (x, y) \|_{L^2 \left( \Omega \times \YY \right)}
    \leqslant \frac{2}{\alpha} | a |_{\infty} | \chi |_{\infty} | u_0'
    |_{\infty}  \label{eq:fundest1derrorfunction2} .
  \end{equation}
\end{lemma}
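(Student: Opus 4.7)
The plan is to reduce the boundary value problem (\ref{eq:boundarylayer1D})--(\ref{eq:boundarylayer1Dbc}) to one with homogeneous Dirichlet data, via an explicit $\varepsilon$-uniform lifting of the trace, and then to apply Lax--Milgram coercivity. The $\varepsilon$-uniformity of the resulting bound is what makes the whole argument non-trivial: the Dirichlet data $\chi(y+x/\varepsilon)\,u_0'(x)$ oscillates in $x$ at scale $\varepsilon$, so a careless extension to the interior will produce an $x$-derivative of order $\varepsilon^{-1}$ and destroy the estimate.

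First, I would build a lifting $\tilde v_\varepsilon \in L^2_\sharp[\YY, H^1(\Omega)]$ matching the trace of $v_\varepsilon$ on $\partial\Omega\times\YY$. In one dimension $\partial\Omega=\{0,\omega\}$ has only two points, so a simple convex interpolation in $x$ suffices:
\begin{equation*}
\tilde v_\varepsilon(x,y) \assign \chi(y)\,u_0'(0)\,\frac{\omega-x}{\omega} + \chi\!\left(y+\tfrac{\omega}{\varepsilon}\right) u_0'(\omega)\,\frac{x}{\omega}.
\end{equation*}
This is $\YY$-periodic, matches the trace at both boundary points, and its derivative
\begin{equation*}
\partial_x \tilde v_\varepsilon(x,y) = \frac{\chi\!\left(y+\tfrac{\omega}{\varepsilon}\right) u_0'(\omega) - \chi(y)\,u_0'(0)}{\omega}
\end{equation*}
is bounded in $L^\infty(\Omega\times\YY)$ by a constant multiple of $|\chi|_\infty\,|u_0'|_\infty$, \emph{uniformly in $\varepsilon$}.

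Next, setting $w_\varepsilon \assign v_\varepsilon - \tilde v_\varepsilon \in L^2_\sharp[\YY, H^1_0(\Omega)]$, the weak form of (\ref{eq:boundarylayer1D}) reads
\begin{equation*}
\int_{\Omega\times\YY} a(y+x/\varepsilon)\,\partial_x w_\varepsilon\,\partial_x\varphi\,\mathd x\,\mathd y \eqs -\int_{\Omega\times\YY} a(y+x/\varepsilon)\,\partial_x\tilde v_\varepsilon\,\partial_x\varphi\,\mathd x\,\mathd y
\end{equation*}
for every $\varphi\in L^2_\sharp[\YY,H^1_0(\Omega)]$. Testing with $\varphi=w_\varepsilon$, using $a\geq\alpha$ on the left and $|a|_\infty$ with Cauchy--Schwarz on the right, I would obtain $\|\partial_x w_\varepsilon\|_{L^2(\Omega\times\YY)}\leq (|a|_\infty/\alpha)\,\|\partial_x\tilde v_\varepsilon\|_{L^2(\Omega\times\YY)}$. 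The triangle inequality, together with $|a|_\infty\geq\alpha$, then yields $\|\partial_x v_\varepsilon\|_{L^2(\Omega\times\YY)}\leq (2|a|_\infty/\alpha)\,\|\partial_x\tilde v_\varepsilon\|_{L^2(\Omega\times\YY)}$, and the bound on $\partial_x\tilde v_\varepsilon$ closes the estimate.

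The main obstacle is the one flagged at the start: producing a lifting whose derivative does not inherit the $\varepsilon^{-1}$ factor that a bulk extension of $\chi(y+x/\varepsilon)\,u_0'(x)$ would carry. In one dimension this is cheap because $\partial\Omega$ reduces to two isolated points, so a linear-in-$x$ interpolation between the two frozen boundary values completely suppresses the $x$-oscillation. An equivalent route, available only in 1D, would be to integrate the equation directly: (\ref{eq:boundarylayer1D}) forces $a(y+x/\varepsilon)\,\partial_x v_\varepsilon(x,y)$ to be $x$-independent, hence equal to some $c_\varepsilon(y)$ determined by the boundary conditions, which one bounds using the elementary inequality $\int_0^\omega a(y+x/\varepsilon)^{-1}\,\mathd x \geq \omega/|a|_\infty$; this highlights why the generalization of such a uniform lifting to higher dimensions is substantially more delicate and is what makes the subsequent boundary-layer analysis interesting.
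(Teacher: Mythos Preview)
Your proof is correct, but it takes a different route from the paper. The paper does exactly what you describe in your last paragraph as the ``equivalent route'': it integrates (\ref{eq:boundarylayer1D}) once to get $v_\varepsilon'(x,y)=c_\varepsilon(y)\,a^{-1}(y+x/\varepsilon)$, determines $c_\varepsilon(y)$ from the two boundary values, and bounds it using $\langle a^{-1}(y+\cdot/\varepsilon)\rangle_\Omega^{-1}\leq |a|_\infty$. This yields a pointwise bound on $v_\varepsilon'$ and hence the $L^2$ estimate directly. Your primary argument instead builds an explicit linear-in-$x$ lifting $\tilde v_\varepsilon$ of the Dirichlet data, then runs the standard Lax--Milgram/energy estimate on $w_\varepsilon=v_\varepsilon-\tilde v_\varepsilon$. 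Both approaches hinge on the same one-dimensional fact---that the boundary consists of two points, so the rapid $x$-oscillation of $\chi(y+x/\varepsilon)$ never enters the interior---but they exploit it differently: the paper freezes the flux, you freeze the trace. The direct-integration route is shorter and gives a slightly sharper constant; your variational route is the one that makes clear what breaks in higher dimensions (no $\varepsilon$-uniform lifting of oscillatory boundary data), which is precisely the point of the subsequent section. Note that your final constant is $4$ rather than the $2$ stated in the lemma, but the paper's own computation also implicitly carries an $|\Omega|^{-1/2}$ factor that is dropped in the statement, so neither argument reproduces the displayed constant exactly; the content of the lemma is the $\varepsilon$-uniformity, which both proofs deliver.
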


\begin{proof}
  Let us integrate (\ref{eq:boundarylayer1D}). We get $v_{\varepsilon}' (x,
  y) \eqs c_{\varepsilon} (y) a^{- 1} (y + x / \varepsilon)$ for some
  measurable real function $c_{\varepsilon}$. Taking into account boundary
  conditions (\ref{eq:boundarylayer1Dbc}), we compute
  \begin{equation}
    c_{\varepsilon} (y) = \frac{\chi (y + \omega / \varepsilon) u_0' (\omega)
    - \chi (y) u_0' (0)}{| \Omega | \langle a^{- 1} (y + \cdot / \varepsilon)
    \rangle_{\Omega}} .
  \end{equation}
  Next we note that $\left\langle a^{- 1} \left( y + \frac{\cdot}{\varepsilon}
  \right) \right\rangle_{\Omega}^{- 1} \leqslant | a |_{\infty}$ for a.e. $y
  \in \RR$. Hence, observing that since $f \in L^2 (\Omega)$ one has $u_0 \in
  W^{1, \infty} (\Omega)$, we finish with the estimate $\alpha | \Omega | |
  v_{\varepsilon}' (x, y) | \leqslant 2 | a |_{\infty} | \chi |_{\infty} |
  u_0' |_{\infty}$ from which (\ref{eq:fundest1derrorfunction2}) immediately
  follows.
\end{proof}

We can now prove Theorem \ref{thm:BoundaryLayers1D}.

\begin{proof}[of Theorem \ref{thm:BoundaryLayers1D}]
  Observing that $\left\langle [ u_1 \left( x, y + \frac{x}{\varepsilon}
  \right) ]' \right\rangle_{\YY} \eqs 0$ we compute
  \begin{eqnarray}
    \left\| \langle u'_{\varepsilon} \rangle_{\YY} - u_0' \right\|_{L^2
    (\Omega)} & \eqs & \left\| \left\langle u'_{\varepsilon} (x, y) - u_0 (x)
    - \varepsilon [ u_1 \left( x, y + \frac{x}{\varepsilon} \right)
    ]' \right\rangle_{\YY}' \right\|_{L^2 (\Omega)} \\
    & \leqslant & \left\| \langle e'_{\varepsilon} (x, y) \rangle_{\YY}
    \right\|_{\Omega} + \varepsilon \left\| \langle v_{\varepsilon}' (x, y)
    \rangle_{\YY} \right\|_{L^2 (\Omega)} \\
    & \leqslant & \| e'_{\varepsilon} \|_{L^2 \left( \Omega \times \YY
    \right)} + \varepsilon \| v_{\varepsilon}' \|_{L^2 \left( \Omega \times
    \YY \right)} . 
  \end{eqnarray}
  Hence taking into account estimates (\ref{eq:fundest1derrorfunction1}) and
  (\ref{eq:fundest1derrorfunction2}) we get the result.
\end{proof}

\subsection{Higher Order Correctors in $N$ dimensions}

This section is devoted to the proof of estimate (\ref{eq:higherordercorr1}).

\begin{theorem}
  \label{thm:BoundaryLayersND}Let $(u_0, u_1) \in H_0^1 (\Omega) \times L^2
  [ \Omega, H^1_{\sharp} (Y) / \RR ]$ be the unique solution of the
  homogenized system of equations (\ref{eq:homeq1distrib})-(\ref{eq:homeq2}).
  Define the error function by the position
  \begin{equation}
    e_{\varepsilon} (x, y) \assign u_{\varepsilon} (x, y) - u_0 (x) -
    \varepsilon \left. [ u_1 \left( x, y + \frac{x}{\varepsilon} \right)
    - v_{\varepsilon} (x, y) \right. ],
  \end{equation}
  $v_{\varepsilon} \in L^2_{\sharp} [ \YY, H^1 (\Omega) ]$ being
  the unique solution of the boundary layer problem
  (\ref{eq:boundarylayer1D})-(\ref{eq:boundarylayer1Dbc}). If $u_0 \in H^2
  (\Omega)$ then $\left\| \left\langle \gradx e_{\varepsilon}
  \right\rangle_{\YY} \right\|_{\Omega} \in \mathcal{O} (\varepsilon)$. More
  precisely, the following estimate holds
  \begin{equation}
    \left\| \left\langle \gradx u_{\varepsilon} \right\rangle_{\YY} - \gradx
    u_0 + \varepsilon \left\langle \gradx v_{\varepsilon} \right\rangle_{\YY}
    \right\|_{L^2 (\Omega)} \leqslant \varepsilon c_A  \| u_0 \|_{H^2
    (\Omega)}, \label{eq:NDstrongestimate}
  \end{equation}
  for a suitable constant $c_{\alpha} > 0$ depending on the matrix $A$ only.
\end{theorem}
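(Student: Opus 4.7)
The plan is to reduce the statement to a uniform $H^1$ estimate on $e_\varepsilon$, derive a PDE whose right-hand side is in divergence form with an $\varepsilon$-small generating field, and close by Lax--Milgram. First, since $u_1(x,y)=\gradx u_0(x)\cdot\tmmathbf{\chi}(y)$ with $\langle\tmmathbf{\chi}\rangle_\YY=0$, the $\YY$-periodicity of $u_1$ in its second argument yields $\langle\gradx[u_1(x,y+x/\varepsilon)]\rangle_\YY=0$: the $\frac{1}{\varepsilon}\grady u_1$ contribution averages out by periodicity, and the $\gradx u_1$ contribution is annihilated by $\langle\tmmathbf{\chi}\rangle_\YY=0$. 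Hence $\langle\gradx e_\varepsilon\rangle_\YY=\langle\gradx u_\varepsilon\rangle_\YY-\gradx u_0+\varepsilon\langle\gradx v_\varepsilon\rangle_\YY$, and Jensen's inequality reduces (\ref{eq:NDstrongestimate}) to showing
\begin{equation*}
\|\gradx e_\varepsilon\|_{L^2(\Omega\times\YY)}\leqslant\varepsilon\,c_A\,\|u_0\|_{H^2(\Omega)}.
\end{equation*}

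The Dirichlet datum imposed on $v_\varepsilon$ in (\ref{eq:boundarylayer1Dbc}) is precisely what makes $e_\varepsilon\in L^2_\sharp[\YY,H^1_0(\Omega)]$. Setting $B(y):=A(y)(I_N+\grady\tmmathbf{\chi}(y))$, the cell equation (\ref{eq:splittedsoldistribcell}) reads $\divy B=0$ columnwise, $\langle B\rangle_\YY=A_{\hom}$, and $A(y)(\gradx u_0(x)+\grady u_1(x,y))=B(y)\gradx u_0(x)$. Expanding $\gradx[u_0(x)+\varepsilon u_1(x,y+x/\varepsilon)]$ by the chain rule and then applying $\divx$, the would-be $\varepsilon^{-1}$ term vanishes thanks to $\divy B=0$; combining with $A_{\hom}:\nabla^2 u_0=-f$ and the $A$-harmonicity of $v_\varepsilon$, one arrives at
\begin{equation*}
-\divx[A(y+x/\varepsilon)\gradx e_\varepsilon]\eqs [B(y+x/\varepsilon)-\langle B\rangle_\YY]:\nabla^2 u_0(x)+\varepsilon\,\divx[A(y+x/\varepsilon)(\gradx u_1)(x,y+x/\varepsilon)].
\end{equation*}
The second term is already of the required form $\varepsilon\divx G^{(1)}_\varepsilon$ with $\|G^{(1)}_\varepsilon\|_{L^2(\Omega\times\YY)}\leqslant C\|A\|_\infty\|\tmmathbf{\chi}\|_\infty\|\nabla^2 u_0\|_{L^2(\Omega)}$.

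The first term is a priori only $\mathcal{O}(1)$, and recasting it in divergence form with an $\varepsilon$ prefactor is the main obstacle. The cure is the standard antisymmetric \emph{dual corrector}: since each column of $B-\langle B\rangle_\YY$ is $\YY$-periodic, zero-mean, and $\divy$-free, there exists a $\YY$-periodic tensor $\Psi_{ikj}(y)$, antisymmetric in $(i,k)$, such that $\sum_k\partial_{y_k}\Psi_{ikj}=B_{ij}-(A_{\hom})_{ij}$. Substituting $\partial_{y_k}\Psi(y+x/\varepsilon)=\varepsilon\,\partial_{x_k}\Psi(y+x/\varepsilon)$ and using Leibniz yields
\begin{equation*}
[B(y+x/\varepsilon)-\langle B\rangle_\YY]:\nabla^2 u_0(x)\eqs \varepsilon\,\divx G^{(2)}_\varepsilon(x,y)-\varepsilon\sum_{i,j,k}\Psi_{ikj}(y+x/\varepsilon)\,\partial_i\partial_j\partial_k u_0(x),
\end{equation*}
with $(G^{(2)}_\varepsilon)_k:=\sum_{i,j}\Psi_{ikj}(y+x/\varepsilon)\partial_i\partial_j u_0(x)$. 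The parasitic third-derivative term vanishes identically because the antisymmetry of $\Psi$ in $(i,k)$ kills its contraction against the symmetric $\partial_i\partial_k$ — this is precisely why $u_0\in H^2$ is enough. Setting $G_\varepsilon:=G^{(1)}_\varepsilon+G^{(2)}_\varepsilon$, we obtain $-\divx[A(y+x/\varepsilon)\gradx e_\varepsilon]=\varepsilon\divx G_\varepsilon$ with $\|G_\varepsilon\|_{L^2(\Omega\times\YY)}\leqslant c_A\|u_0\|_{H^2(\Omega)}$ for a constant $c_A$ depending only on $\|A\|_\infty$, $\|\tmmathbf{\chi}\|_\infty$, $\|\Psi\|_\infty$, and $\alpha$. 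Testing the weak formulation against $e_\varepsilon\in L^2_\sharp[\YY,H^1_0(\Omega)]$ itself and invoking uniform ellipticity and Cauchy--Schwarz yields $\alpha\|\gradx e_\varepsilon\|_{L^2(\Omega\times\YY)}\leqslant\varepsilon\|G_\varepsilon\|_{L^2}$, which closes the argument.
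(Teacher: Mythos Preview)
Your argument is correct and follows essentially the same route as the paper: both derive the equation $-\divx[A(y+x/\varepsilon)\gradx e_\varepsilon]=\varepsilon\,\divx G_\varepsilon$ with $\|G_\varepsilon\|_{L^2(\Omega\times\YY)}\lesssim\|u_0\|_{H^2}$ and close by testing against $e_\varepsilon\in L^2_\sharp[\YY,H^1_0(\Omega)]$, the key step in each case being to rewrite the $\mathcal{O}(1)$ contribution via a skew-symmetric potential for the zero-mean, $\divy$-free field $B-A_{\hom}$ (your dual corrector $\Psi_{ikj}$ is exactly the paper's Helmholtz--Hodge tensor $\tmmathbf{\omega}_i$, and your symmetry/antisymmetry cancellation of the third-derivative term is the index-notation rendering of the paper's $\divx\curlx=0$). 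One cosmetic point: you invoke $\|\tmmathbf{\chi}\|_\infty$ and $\|\Psi\|_\infty$, but under the bare hypothesis $A\in L^\infty_\sharp(\YY)$ only $H^1_\sharp$ regularity is available; since your $G^{(1)}_\varepsilon,G^{(2)}_\varepsilon$ are products of the form (periodic $L^2$ in $y$)$\times$($L^2$ in $x$), the $L^2(\YY)$ norms of $\tmmathbf{\chi}$ and $\Psi$ already suffice for the bound on $\|G_\varepsilon\|_{L^2(\Omega\times\YY)}$.
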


\begin{proof}
  Let us set $u_1^{\varepsilon} (x, y) \assign u_0 (x) + \varepsilon u_1 (x,
  y)$, where $u_1 (x, y) = \gradx u_0 (x) \cdot \tmmathbf{\chi} (y)$ as shown
  in Theorem \ref{thm:newhomapproach}. We have (let us denote by
  $\mathcal{H}_x \assign \gradx \gradx$ the partial hessian operator)
  \begin{equation}
    \gradx [ u_1^{\varepsilon} \left( x, y + \frac{x}{\varepsilon}
    \right) ] \eqs [ I + \grady \tmmathbf{\chi} \left( y +
    \frac{x}{\varepsilon} \right) ] \gradx u_0 + \varepsilon
    \mathcal{H}_x [u_0] (x) \tmmathbf{\chi} \left( y + \frac{x}{\varepsilon}
    \right) .
  \end{equation}
  Hence
  \begin{equation}
    A_{\hom} \gradx u_0 (x) - A \left( y + \frac{x}{\varepsilon} \right)
    \gradx [ u_1^{\varepsilon} \left( x, y + \frac{x}{\varepsilon}
    \right) ] \eqs \mathcal{A}_0 \left( y + \frac{x}{\varepsilon}
    \right) \gradx u_0 - \varepsilon \tmmathbf{h} \left( x, y +
    \frac{x}{\varepsilon} \right), \label{eq:jjtemp1}
  \end{equation}
  where, for notational convenience, we have introduce the functions
  \begin{equation}
    \tmmathbf{h} (x, y) \assign A (y) \mathcal{H}_x [u_0] (x)
    \tmmathbf{\chi} (y) \quad, \quad \mathcal{A}_0 (y) \assign A_{\hom} -
    a_{\hom} (y)
  \end{equation}
  with $a_{\hom} (y) \assign A (y) [ I + \grady \tmmathbf{\chi} (y)
  ]$. Let us note that $\langle \mathcal{A}_0 \rangle_{\YY} = 0$,
  because $A_{\hom} = \langle a_{\hom} (\cdot) \rangle_{\YY}$. By taking the
  distributional divergence of both members of the previous equation
  (\ref{eq:jjtemp1}), recalling that $\divx \left( A \left( y +
  \frac{x}{\varepsilon} \right) \gradx v_{\varepsilon} (x, y) \right) = 0$,
  that due to (\ref{eq:homogproblem2dot0feq}) and (\ref{eq:homogenizedpdej1})
  one has
  \begin{equation}
    \divx [ A \left( y + \frac{x}{\varepsilon} \right) \gradx
    u_{\varepsilon} (x, y) ] \eqs \divx \left( A_{\hom} (x) \gradx u_0
    (x) \right),
  \end{equation}
  and that $v_{\varepsilon} \in L^2_{\sharp} [ \YY, H^1 (\Omega) ]$
  is the solution of the boundary layer problem
  (\ref{eq:boundarylayer1D})-(\ref{eq:boundarylayer1Dbc}), we get
  \begin{equation}
    \divx \left( A \left( y + \frac{x}{\varepsilon} \right) \gradx
    [e_{\varepsilon} (x, y)]  \right) \eqs \divx \left( \mathcal{A}_0 \left( y
    + \frac{x}{\varepsilon} \right) \gradx u_0 \right) - \varepsilon \divx
    [ \tmmathbf{h} \left( x, y + \frac{x}{\varepsilon} \right) ] .
    \label{eq:fundequality1DepsN}
  \end{equation}
  Next, let us recall that in the space $L^2_{\tmop{sol}} \left( \YY \right)$
  of solenoidal and periodic vector fields, defined by the position
  $L^2_{\tmop{sol}} \left( \YY \right) \assign \left\{ \tmmathbf{p} \in L^2
  \left( \YY \right) \; : \; \divv \tmmathbf{p} (y) = 0 \right\}$ the
  following Helmholtz-Hodge decomposition holds (cfr. {\cite{Jikov1994}}): if
  $\tmmathbf{p} \in L^2_{\tmop{sol}} \left( \YY \right)$ there exists a
  skew-symmetric matrix $\tmmathbf{\omega} \assign \left( \tmmathbf{\omega}^1
  \; | \; \tmmathbf{\omega}^2 \; | \; \cdots \; | \; \tmmathbf{\omega}^N
  \right) \in [ H^1_{\sharp} \left( \YY \right) ]^{N \times N}$
  such that
  \begin{equation}
    \langle \tmmathbf{\omega} \rangle_{\YY} =\tmmathbf{0} \quad, \quad
    \tmmathbf{p} \eqs \langle \tmmathbf{p} \rangle_{\YY} + \sum_{j = 1}^N
    \partial_j \tmmathbf{\omega}^j = \langle \tmmathbf{p} \rangle_{\YY} +
    \curl \tmmathbf{\omega},
  \end{equation}
  with $\curl : \tmmathbf{\omega} \mapsto \curl \tmmathbf{\omega} \assign
  \partial_1 \tmmathbf{\omega}^1 + \cdots + \partial_N \tmmathbf{\omega}^N$.
  Note that $\divy \mathcal{A}_0 (y) =\tmmathbf{0}$ because $\mathcal{A}_0$
  solves the cell equation (\ref{eq:splittedsoldistribcell}). On the other
  hand, $\langle \mathcal{A}_0 \rangle_{\YY} =\tmmathbf{0}$ and therefore due
  to the Helmholtz-Hodge decomposition there exist skew-symmetric matrices
  $(\tmmathbf{\omega}_i)_{i \in \NN_N} \in [ H^1_{\sharp} \left( \YY
  \right) ]^{N \times N}$ such that $\mathcal{A}_0 (y) \mathbf{e}_i =
  \curl \tmmathbf{\omega}_i (y)$ for every $i \in \NN_N$. From the scaling
  relation
  \begin{equation}
    \varepsilon \cdot \curlx [\tmmathbf{\omega}_i (y + x / \varepsilon)] =
    \curl \tmmathbf{\omega}_i (y + x / \varepsilon),
  \end{equation}
  recalling that for any $g \in H^1_{\sharp} \left( \YY \right),
  \tmmathbf{\omega} \in [ H^1_{\sharp} \left( \YY \right) ]^{N
  \times N}$ one has $\curl (g\tmmathbf{\omega}) =\tmmathbf{\omega} \nabla g +
  g \curl \tmmathbf{\omega}$ in $\mathcal{D}'$, we have
  \begin{eqnarray}
    \mathcal{A}_0 \left( y + \frac{x}{\varepsilon} \right) \gradx u_0 (x) &
    \eqs & \varepsilon \sum_{i \in \NN_N} \partial_i u_0 (x) \curlx
    [\tmmathbf{\omega}_i (y + x / \varepsilon)] \nonumber\\
    & \eqs & \varepsilon \sum_{i \in \NN_N} \curlx [\partial_i u_0 (x)
    \tmmathbf{\omega}_i (y + x / \varepsilon)] - \varepsilon \sum_{i \in
    \NN_N} \tmmathbf{\omega}_i (y + x / \varepsilon) \partial_i \gradx u_0 (x)
    \nonumber\\
    & \eqs & \varepsilon \sum_{i \in \NN_N} \curlx [\partial_i u_0 (x)
    \tmmathbf{\omega}_i (y + x / \varepsilon)] - \varepsilon \tmmathbf{\eta}
    (x, y + x / \varepsilon), 
  \end{eqnarray}
  with $\tmmathbf{\eta} (x, y) \assign \sum_{i \in \NN_N} \tmmathbf{\omega}_i
  (y) \partial_i \gradx u_0 (x) \in L^2_{\sharp} [ \YY, L^2 (\Omega)
  ]$ and $\langle \tmmathbf{\eta} \rangle_{\YY} =\tmmathbf{0}$. Passing
  to the divergence in the previous relations, we get $\divx \left(
  \mathcal{A}_0 \left( y + \frac{x}{\varepsilon} \right) \gradx u_0 (x)
  \right) = \varepsilon \divx (\tmmathbf{\eta} (x, y + x / \varepsilon))$.
  Hence, equation (\ref{eq:fundequality1DepsN}) simplifies to
  \begin{equation}
    \divx \left( A \left( y + \frac{x}{\varepsilon} \right) \gradx
    [e_{\varepsilon} (x, y)]  \right) \eqs - \varepsilon \divx
    \tmmathbf{F}_{\varepsilon} (x, y) \quad \text{in } \mathcal{D}' \left(
    \Omega \times \YY \right), \label{eq:fundequality1Depscurl}
  \end{equation}
  with $\tmmathbf{F}_{\varepsilon} (x, y) \assign \tmmathbf{\eta} (x, y + x /
  \varepsilon) +\tmmathbf{h} \left( x, y + \frac{x}{\varepsilon} \right) \in
  L^2_{\sharp} [ \YY, L^2 (\Omega) ]^N$ and $\{
  \tmmathbf{F}_{\varepsilon} \}_{\varepsilon \in \RR^+}$ a bounded subset of
  $L^2_{\sharp} [ \YY, L^2 (\Omega) ]^N$. The previous equation
  (\ref{eq:fundequality1Depscurl}) reads in variational form as
  \begin{equation}
    \int_{\Omega \times \YY} A \left( y + \frac{x}{\varepsilon} \right) \gradx
    e_{\varepsilon} (x, y) \cdot \gradx \varphi_{\varepsilon} (x, y) \mathd x
    \mathd y \eqs - \varepsilon \int_{\Omega \times \YY}
    \tmmathbf{F}_{\varepsilon} (x, y) \cdot \gradx \varphi_{\varepsilon} (x,
    y) \mathd x \mathd y, \label{eq:fundepsestimatenD}
  \end{equation}
  for any $\varphi_{\varepsilon} \in L^2_{\sharp} [ \YY, H_0^1 (\Omega)
  ]$. Since $v_{\varepsilon}$ solves the boundary layer problem
  (\ref{eq:boundarylayer1D})-(\ref{eq:boundarylayer1Dbc}), we have
  $e_{\varepsilon} \in L^2_{\sharp} [ \YY, H_0 (\Omega) ]$ and
  therefore, testing (\ref{eq:fundepsestimatenD}) on $e_{\varepsilon}$ we
  finish, for some suitable constant $c_A > 0$ depending on $A$ only, with
  (\ref{eq:NDstrongestimate}).
\end{proof}

\section{Weak two-scale compactness for $\SStwo$-valuedHarmonic
maps}\label{sec:homharmonicmaps}

The aim of this section is to prove a weak two-scale compactness result for
$\SStwo$-valued harmonic maps, and make some remarks which point out possible
weaknesses of this alternative notion of two-scale convergence.

In what follows \ $\Omega$ is a bounded and Lipschitz domain of $\RR^3$ and we
shall make use of the following notations: $W (\Omega) \assign L^{\infty}
(\Omega) \cap H^1 (\Omega)$ and $W_0 (\Omega) \assign L^{\infty} (\Omega) \cap
H^1_0 (\Omega)$.

\subsection{Harmonic maps equation}

We want to focus on the homogenization of the family of harmonic map equations
arising as the Euler-Lagrange equations associated to the family of Dirichlet
energy functionals
\begin{equation}
  \mathcal{E}_{\varepsilon} (\tmmathbf{u}_{\varepsilon}) \assign \int_{\Omega
  \times \YY} a_{\varepsilon} (x, y) \gradx \tmmathbf{u}_{\varepsilon} (x, y)
  \cdot \gradx \tmmathbf{u}_{\varepsilon} (x, y) \mathd x \mathd y \quad,
  \quad a_{\varepsilon} (x, y) \assign a \left( x, y + \frac{x}{\varepsilon}
  \right), \label{eq:dirichletenfunc}
\end{equation}
all defined in $L^{\infty}_{\sharp} [ \YY, W \left( \Omega, \SStwo
\right) ]^3$. Here, as usual, the coefficient $a \in L^{\infty}_{\sharp}
\left( \YY, L^{\infty} (\Omega) \right)$ is a positive function bounded from
below by some positive constant. The stationary condition on
$\mathcal{E}_{\varepsilon}$ with respect to tangential variations in
$L^{\infty}_{\sharp} [ \YY, W_0 (\Omega) ]^3$ conducts to the
equation of harmonic maps
\begin{equation}
  \int_{\Omega \times \YY} a_{\varepsilon} (x, y) \gradx
  \tmmathbf{u}_{\varepsilon} (x, y)  \gradx \tmmathbf{\eta}_{\varepsilon} (x,
  y) \mathd x \mathd y = 0 \label{eq:statharmmap}
\end{equation}
which must be satisfied for every $\tmmathbf{\eta}_{\varepsilon} \in
L^{\infty}_{\sharp} [ \YY, W_0 (\Omega) ]^3$ such that
$\tmmathbf{\eta}_{\varepsilon} (x, y) \in T_{\tmmathbf{u}_{\varepsilon} (x,
y)} \SStwo$ a.e. in $\Omega \times \YY$.

\begin{theorem}
  \label{thm:homogenizationhmaps}For every $\varepsilon \in \RR^+$ let
  $\tmmathbf{u}_{\varepsilon} \in L^{\infty}_{\sharp} [ \YY, W \left(
  \Omega, \SStwo \right) ]^3$ be a solution of the harmonic map equation
  (\ref{eq:statharmmap}). If $(\tmmathbf{u}_{\varepsilon})_{\varepsilon \in
  \RR^+} \twoheadrightarrow \tmmathbf{u}_0$ weakly in $\mathrm{L^2_{\sharp}
  [ \YY, H^1 (\Omega) ]}^3$ and $\tmmathbf{u}_0$ takes values on
  $\SStwo$, then $\tmmathbf{u}_0$ is still an harmonic map. More precisely,
  $\tmmathbf{u}_0 \in W \left( \Omega, \SStwo \right)^3$ satisfies the
  following homogenized harmonic map equation
  \begin{equation}
    \int_{\Omega} A_{\hom} (x)_{_{_{_{}}}} \nabla \tmmathbf{u}_0 (x) \nabla
    \tmmathbf{\varphi} (x) \mathd x \eqs 0 \quad \forall \tmmathbf{\varphi}
    \in T_{\tmmathbf{u}_0} \SStwo  \label{eq:homogenizedharmonicmapeq}
  \end{equation}
  in which
  \begin{equation}
    A_{\hom} (x) \assign \int_{\YY} a (x, y) \left( I + \grady \tmmathbf{\chi}
    (x, y) \right) \mathd y,
  \end{equation}
  and $\tmmathbf{\chi} \assign (\chi_1, \chi_2, \chi_3) \in L^2 [ \Omega,
  H^1_{\sharp} \left( \YY \right) ]^3$ is the unique null average
  solution of the cell problems ($i \in \NN_3$)
  \begin{equation}
    \divy \left( a (x, y) \left( \grady \chi_{i_{}} (x, y) + e_i \right)
    \right) \eqs 0. \label{eq:cellharmonic1}
  \end{equation}
\end{theorem}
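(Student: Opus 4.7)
The plan is to adapt the classical argument for the homogenization of $\SStwo$-valued harmonic maps to the cell-averaging framework. The crux is (a) to upgrade the weak two-scale convergence of $\tmmathbf{u}_{\varepsilon}$ to a strong one by using the constraint $|\tmmathbf{u}_0|=1$, and (b) to test the Euler--Lagrange equation (\ref{eq:statharmmap}) against tangent variations of the form $\tmmathbf{\eta}_{\varepsilon}=\tmmathbf{u}_{\varepsilon}\wedge\tmmathbf{\phi}_{\varepsilon}$, which automatically satisfy the tangency constraint and eliminate the nonlinear self-term via the antisymmetry of the scalar triple product.

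First, Proposition \ref{prop:g1classicalctwoscaleL2H1} applied to $(\tmmathbf{u}_{\varepsilon})$, bounded in $L^2_{\sharp}[\YY,H^1(\Omega)]^3$, produces (up to subsequence) $\tmmathbf{u}_{\varepsilon}\twoheadrightarrow\tmmathbf{u}_0$ with $\tmmathbf{u}_0$ independent of $y$, and $\gradx\tmmathbf{u}_{\varepsilon}\twoheadrightarrow\gradx\tmmathbf{u}_0+\grady\tmmathbf{u}_1$ for some $\tmmathbf{u}_1\in L^2[\Omega,H^1_{\sharp}(\YY)/\RR]^3$. Because $|\tmmathbf{u}_{\varepsilon}|=1$ and $|\tmmathbf{u}_0|=1$ a.e. in $\Omega\times\YY$, and $\mathcal{F}_{\varepsilon}$ is an isometry on $L^2$,
\[
\|\mathcal{F}_{\varepsilon}(\tmmathbf{u}_{\varepsilon})\|_{L^2(\Omega\times\YY)} = \|\tmmathbf{u}_{\varepsilon}\|_{L^2(\Omega\times\YY)} = |\Omega|^{1/2} = \|\tmmathbf{u}_0\|_{L^2(\Omega\times\YY)},
\]
so the Radon--Riesz property promotes the weak two-scale convergence of $\tmmathbf{u}_{\varepsilon}$ to a strong one. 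Passing to the two-scale limit in the pointwise identity $\tmmathbf{u}_{\varepsilon}\cdot\gradx\tmmathbf{u}_{\varepsilon}=0$ (consequence of $|\tmmathbf{u}_{\varepsilon}|=1$) also yields $\tmmathbf{u}_0\cdot\grady\tmmathbf{u}_1=0$, i.e.\ $\gradx\tmmathbf{u}_0+\grady\tmmathbf{u}_1\in T_{\tmmathbf{u}_0}\SStwo$ a.e.

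Substituting $\tmmathbf{\eta}_{\varepsilon}=\tmmathbf{u}_{\varepsilon}\wedge\tmmathbf{\phi}_{\varepsilon}$ into (\ref{eq:statharmmap}) and invoking $\partial_i\tmmathbf{u}_{\varepsilon}\cdot(\partial_i\tmmathbf{u}_{\varepsilon}\wedge\tmmathbf{\phi}_{\varepsilon})=0$ reduces the equation to $\int a_{\varepsilon}\,(\tmmathbf{u}_{\varepsilon}\wedge\gradx\tmmathbf{u}_{\varepsilon}):\gradx\tmmathbf{\phi}_{\varepsilon}=0$ for every $\tmmathbf{\phi}_{\varepsilon}\in L^{\infty}_{\sharp}[\YY,W_0(\Omega)]^3$. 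The $\YY$-periodicity of $a$ and a cyclic permutation of the triple product rewrite this as
\[
\int_{\Omega\times\YY} a(x,y)\,\mathcal{F}_{\varepsilon}(\partial_i\tmmathbf{u}_{\varepsilon})\cdot\bigl(\mathcal{F}_{\varepsilon}(\partial_i\tmmathbf{\phi}_{\varepsilon})\wedge\mathcal{F}_{\varepsilon}(\tmmathbf{u}_{\varepsilon})\bigr)\,\mathd x\,\mathd y = 0.
\]
I feed in the two families from Proposition \ref{prop:attainedtest}: (i) $\tmmathbf{\phi}_{\varepsilon}(x,y)\assign\tmmathbf{\phi}(x)$ with $\tmmathbf{\phi}\in\mathcal{D}(\Omega)^3$, and (ii) $\tmmathbf{\phi}_{\varepsilon}(x,y)\assign\varepsilon\,\tmmathbf{\psi}(x,y+x/\varepsilon)$ with $\tmmathbf{\psi}\in\mathcal{D}(\Omega\times\YY)^3$. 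In both cases the bracketed factor converges strongly in $L^2$ (strong $L^2$ of $\mathcal{F}_{\varepsilon}(\tmmathbf{u}_{\varepsilon})$ combined with strong $L^2$ of $\mathcal{F}_{\varepsilon}(\partial_i\tmmathbf{\phi}_{\varepsilon})$, multiplied by the bounded $a$), while $\mathcal{F}_{\varepsilon}(\partial_i\tmmathbf{u}_{\varepsilon})\rightharpoonup\partial_i\tmmathbf{u}_0+\partial_{y_i}\tmmathbf{u}_1$ weakly; the integral passes to the limit and yields, respectively, the macroscopic equation
\[
\int_{\Omega\times\YY} a(x,y)\,\bigl(\tmmathbf{u}_0\wedge(\gradx\tmmathbf{u}_0+\grady\tmmathbf{u}_1)\bigr):\gradx\tmmathbf{\phi}(x)\,\mathd x\,\mathd y = 0
\]
and the cell equation obtained by replacing $\gradx\tmmathbf{\phi}(x)$ with $\grady\tmmathbf{\psi}(x,y)$.

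To recognize the classical correctors, I specialize the cell equation to $\tmmathbf{\psi}=\tmmathbf{u}_0\wedge\tilde{\tmmathbf{\psi}}$ (admissible after density extension to $L^2[\Omega,H^1_{\sharp}(\YY)]^3$) and invoke the identity $(\tmmathbf{u}_0\wedge\tmmathbf{A})\cdot(\tmmathbf{u}_0\wedge\tmmathbf{B})=\tmmathbf{A}\cdot\tmmathbf{B}$, valid whenever $\tmmathbf{A},\tmmathbf{B}\perp\tmmathbf{u}_0$ on $\SStwo$, which applies here since $\gradx\tmmathbf{u}_0+\grady\tmmathbf{u}_1\perp\tmmathbf{u}_0$. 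The cell equation thus collapses componentwise to the scalar problem (\ref{eq:cellharmonic1}) and produces $u_1^k(x,y)=\gradx u_0^k(x)\cdot\tmmathbf{\chi}(x,y)$. Averaging the macroscopic equation over $\YY$ with this representation gives $\int_{\Omega}(\tmmathbf{u}_0\wedge A_{\hom}\gradx\tmmathbf{u}_0):\gradx\tmmathbf{\phi}\,\mathd x=0$ for every $\tmmathbf{\phi}\in\mathcal{D}(\Omega)^3$; since $\tmmathbf{\varphi}\mapsto\tmmathbf{u}_0\wedge\tmmathbf{\varphi}$ parametrizes tangent vector fields to $\SStwo$ along $\tmmathbf{u}_0$, and since the residual term $A_{\hom}\gradx\tmmathbf{u}_0:(\gradx\tmmathbf{u}_0\wedge\tmmathbf{\phi})$ vanishes by the symmetry of $A_{\hom}$ (inherited from the scalar coefficient $a$) contracted against a triple product antisymmetric in the gradient indices, this is equivalent to (\ref{eq:homogenizedharmonicmapeq}). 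The main obstacle is the promotion to strong two-scale convergence of $\tmmathbf{u}_{\varepsilon}$: without it, $\tmmathbf{u}_{\varepsilon}\wedge\gradx\tmmathbf{u}_{\varepsilon}$ is a product of two merely weakly convergent sequences and the limit cannot be closed. The hypothesis $|\tmmathbf{u}_0|=1$ is exactly what enables the upgrade, and its non-automaticity is the potential weakness of the method alluded to in the introduction.
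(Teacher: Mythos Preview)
Your argument is correct and follows essentially the same route as the paper: test (\ref{eq:statharmmap}) with $\tmmathbf{\eta}_{\varepsilon}=\tmmathbf{u}_{\varepsilon}\times\tmmathbf{\phi}_{\varepsilon}$, pass to the two-scale limit with the two families of Proposition~\ref{prop:attainedtest} to obtain the macroscopic and cell equations, reduce the cell equation (via $\tmmathbf{\psi}=\tmmathbf{u}_0\times\tilde{\tmmathbf{\psi}}$ and the tangency $\tmmathbf{u}_1\cdot\tmmathbf{u}_0=0$) to the scalar problems (\ref{eq:cellharmonic1}), and recover (\ref{eq:homogenizedharmonicmapeq}) from the macroscopic equation. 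The one substantive addition in your write-up is the Radon--Riesz step upgrading the weak two-scale convergence of $\tmmathbf{u}_{\varepsilon}$ to a strong one from the hypothesis $|\tmmathbf{u}_0|=1$; this is precisely what is needed to pass to the limit in the product $\mathcal{F}_{\varepsilon}(\tmmathbf{u}_{\varepsilon})\times\mathcal{F}_{\varepsilon}(\partial_i\tmmathbf{u}_{\varepsilon})$ and to place oneself in the setting of Lemma~\ref{Lemma:2scalemanifold}, and the paper's proof invokes both without spelling this out, so your step in fact tightens the argument. Your final reduction, which kills the residual term $\sum_{i,j}(A_{\hom})_{ij}\,\partial_j\tmmathbf{u}_0\cdot(\partial_i\tmmathbf{u}_0\times\tmmathbf{\phi})$ by pairing the symmetry of $A_{\hom}$ against the antisymmetry of the triple product, is a slight variant of the paper's route (which instead substitutes $\tmmathbf{\eta}=\tmmathbf{u}_0\times\tmmathbf{\varphi}$ with $\tmmathbf{\varphi}\in T_{\tmmathbf{u}_0}\SStwo$ and uses only the orthogonality relations); both are valid, though note that your version genuinely relies on the symmetry of $A_{\hom}$, which here holds because the coefficient is the scalar $a$.
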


\begin{remark}
  \label{rm:db1}In stating Theorem \ref{thm:homogenizationhmaps} we have
  assumed that the weak limit $\tmmathbf{u}_0$ still takes values on the unit
  sphere of $\RR^3$. Indeed, and this is a drawback of the alternative
  two-scale notion, although the introduction of the $y$ variable in
  (\ref{eq:statharmmap}) overcomes the problem of the admissibility of the
  coefficient $a_{\varepsilon}$, it introduces a loss of compactness into the
  family of energy functionals $\mathcal{E}_{\varepsilon}$ defined in
  (\ref{eq:dirichletenfunc}). Indeed, in the space $\mathrm{L^2_{\sharp}
  [ \YY, H^1 \left( \Omega, \SStwo \right) ]}^3$,
  Rellich--Kondrachov theorem does not apply, and therefore any uniform bound
  on the family $\mathcal{E}_{\varepsilon}$ does not assure compactness of
  minimizing sequences.
\end{remark}

\begin{remark}
  The same result still holds, with minor modifications, if we replace
  $\SStwo$ with $\SSn$. Moreover an analogue result holds if one replace the
  energy density $a_{\varepsilon} \left| \gradx \tmmathbf{u}_{\varepsilon}
  \right|^2$ with the energy density $\sum_{i \in \NN_3} A_{i, \varepsilon}
  \gradx u_{i, \varepsilon} \cdot \gradx u_{i, \varepsilon}$ in which every
  $A_{\varepsilon, i}$ is a definite positive symmetric matrix. On the other
  hand, the proof does not work anymore when the image manifold is arbitrary.
  Indeed, for $\SSn$ valued maps, we can exploit a result of {\tmname{Chen}}
  {\cite{chen1989weak}} which permits to equivalently write the Euler-Lagrange
  equation (\ref{eq:statharmmap}) as an equation in divergence form.
  Unfortunately, this conservation law heavily relies on the invariance under
  rotations of Dirichlet energy for maps into $\SSn$. As a matter of fact,
  when the target manifold is arbitrary, even the less general problem
  concerning weak compactness for weakly harmonic maps remains open
  {\cite{Helein2002harmonic}}.
\end{remark}

We shall make use of the following Lemma which, although more than sufficient
for addressing our problem, can still be rephrased to cover more general
situations. Note that an equivalent result, in the context of classical
two-scale convergence, has already been proved in
{\cite{alouges2015homogenization}}.

\begin{lemma}
  \label{Lemma:2scalemanifold}Let $\mathcal{M} \subset \RR^N$ be a regular
  closed orientable hypersurface, and let
  $(\tmmathbf{u}_{\varepsilon})_{\varepsilon \in \RR^+}$ be a family of
  $L^2_{\sharp} [ \YY, H^1 (\Omega) ]^N$ vector fields such that
  $\tmmathbf{u}_{\varepsilon} (x, y) \in \mathcal{M}$ a.e. in $\Omega \times
  \YY$. If for some $\tmmathbf{u}_0 \in L^2_{\sharp} [ \YY, L^2 (\Omega)
  ]^N$, $\tmmathbf{\xi} \in L^2_{\sharp} [ \YY, L^2 (\Omega)
  ]^{N \times N}$ one has
  \begin{equation}
    \tmmathbf{u}_{\varepsilon} \twoheadrightarrow \tmmathbf{u}_0 \quad
    \text{strongly in } L^2_{\sharp} [ \YY, L^2 (\Omega) ]^N \quad,
    \quad \gradx \tmmathbf{u}_{\varepsilon} \twoheadrightarrow \tmmathbf{\xi}
    \quad \text{in } L^2_{\sharp} [ \YY, L^2 (\Omega) ]^{N \times
    N},
  \end{equation}
  then $\tmmathbf{u}_0 (x, y) = \langle \tmmathbf{u}_0 (x, \cdot)
  \rangle_{\YY}$, i.e. the two-scale limit $\tmmathbf{u}_0$ does not depends
  on the $y$ variable. Moreover there exists an element $\tmmathbf{u}_1 \in
  L^2 [ \Omega, H^1_{\sharp} \left( \YY \right) ]^N$ such that
  \begin{equation}
    \gradx \tmmathbf{u}_{\varepsilon} \twoheadrightarrow \left( \gradx
    \tmmathbf{u}_0 + \grady \tmmathbf{u}_1 \right) \text{ weakly in
    $L^2_{\sharp} [ \YY, L^2 (\Omega) ]^N$}
    \label{eq:statementalreadyknown}
  \end{equation}
  with $\tmmathbf{u}_0 (x) \in \mathcal{M}$ and $\tmmathbf{u}_1 (x, y) \in
  T_{\tmmathbf{u}_0 (x)} \mathcal{M}$ for a.e. $(x, y) \in \Omega \times \YY$.
\end{lemma}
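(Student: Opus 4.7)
My plan is to split the argument into three stages: the soft compactness conclusions (parts 1 and 2), the manifold constraint on $\tmmathbf{u}_0$, and finally the tangential constraint on $\tmmathbf{u}_1$. The strong two-scale convergence hypothesis on $\tmmathbf{u}_\varepsilon$ will be used crucially only in the last two stages.

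First, the $y$-independence of $\tmmathbf{u}_0$ together with the existence of $\tmmathbf{u}_1 \in L^2[\Omega, H^1_{\sharp}(\YY)]^N$ satisfying $\gradx \tmmathbf{u}_\varepsilon \twoheadrightarrow \gradx \tmmathbf{u}_0 + \grady \tmmathbf{u}_1$ are an immediate componentwise application of Proposition \ref{prop:g1classicalctwoscaleL2H1}, since strong two-scale convergence trivially implies weak two-scale convergence. Next, the constraint $\tmmathbf{u}_0(x) \in \mathcal{M}$ a.e.\ follows directly from the strong convergence hypothesis: up to a subsequence $\mathcal{F}_\varepsilon(\tmmathbf{u}_\varepsilon) \to \tmmathbf{u}_0$ pointwise a.e.\ on $\Omega \times \YY$, and since $\mathcal{F}_\varepsilon(\tmmathbf{u}_\varepsilon)(x,y) = \tmmathbf{u}_\varepsilon(x, y - x/\varepsilon) \in \mathcal{M}$ a.e.\ with $\mathcal{M}$ closed, the limit lies in $\mathcal{M}$; the already established $y$-independence of $\tmmathbf{u}_0$ then gives $\tmmathbf{u}_0(x) \in \mathcal{M}$ for a.e.\ $x \in \Omega$.

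The real work is the tangential condition $\tmmathbf{u}_1(x,y) \in T_{\tmmathbf{u}_0(x)}\mathcal{M}$. Since $\mathcal{M}$ is a regular closed orientable hypersurface, it admits a smooth unit normal field $\nu$ that extends smoothly and boundedly to a tubular neighborhood of $\mathcal{M}$ in $\RR^N$. Because $\tmmathbf{u}_\varepsilon(x,y) \in \mathcal{M}$, the chain rule forces every column of $\gradx \tmmathbf{u}_\varepsilon$ to be tangent at $\tmmathbf{u}_\varepsilon$, so $\nu(\tmmathbf{u}_\varepsilon) \cdot \gradx \tmmathbf{u}_\varepsilon = 0$ a.e.; applying the cell shift (which commutes with pointwise composition) gives $\nu(\mathcal{F}_\varepsilon(\tmmathbf{u}_\varepsilon)) \cdot \mathcal{F}_\varepsilon(\gradx \tmmathbf{u}_\varepsilon) = 0$ a.e.\ in $\Omega \times \YY$. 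The main obstacle---passing to the limit in this \emph{nonlinear} product---is resolved by a standard strong-weak pairing: the $L^2$-strong convergence of $\mathcal{F}_\varepsilon(\tmmathbf{u}_\varepsilon)$ together with the continuity and boundedness of $\nu$ (and dominated convergence) yields $\nu(\mathcal{F}_\varepsilon(\tmmathbf{u}_\varepsilon)) \to \nu(\tmmathbf{u}_0)$ strongly in every $L^p$ with $p < \infty$, while $\mathcal{F}_\varepsilon(\gradx \tmmathbf{u}_\varepsilon) \rightharpoonup \gradx \tmmathbf{u}_0 + \grady \tmmathbf{u}_1$ weakly in $L^2$. Letting $\varepsilon \to 0$ against smooth test matrices and using that $\nu(\tmmathbf{u}_0) \cdot \gradx \tmmathbf{u}_0 = 0$ (which holds since $\tmmathbf{u}_0$ takes values in $\mathcal{M}$), I obtain $\nu(\tmmathbf{u}_0(x)) \cdot \grady \tmmathbf{u}_1(x,y) = 0$ a.e.

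To conclude, I would exploit the fact that $\nu(\tmmathbf{u}_0(x))$ depends on $x$ only: the last identity rewrites as $\grady[\nu(\tmmathbf{u}_0(x)) \cdot \tmmathbf{u}_1(x,y)] = 0$, so $\nu(\tmmathbf{u}_0(x)) \cdot \tmmathbf{u}_1(x,y) = \alpha(x)$ is $y$-independent. Recalling that $\tmmathbf{u}_1$ is determined only modulo an additive function of $x$, replacing $\tmmathbf{u}_1(x,y)$ by $\tmmathbf{u}_1(x,y) - \alpha(x) \nu(\tmmathbf{u}_0(x))$ leaves $\grady \tmmathbf{u}_1$ unchanged and produces the desired representative satisfying $\tmmathbf{u}_1(x,y) \in T_{\tmmathbf{u}_0(x)}\mathcal{M}$ a.e.
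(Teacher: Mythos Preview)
Your proposal is correct and follows essentially the same line as the paper's proof: both invoke Proposition~\ref{prop:g1classicalctwoscaleL2H1} for the structure $\tmmathbf{\xi}=\gradx\tmmathbf{u}_0+\grady\tmmathbf{u}_1$, then pass to the limit in the pointwise orthogonality relation $\tmmathbf{n}(\tmmathbf{u}_\varepsilon)\cdot\gradx\tmmathbf{u}_\varepsilon=0$ via the strong--weak pairing to obtain $\tmmathbf{n}(\tmmathbf{u}_0)\cdot\grady\tmmathbf{u}_1=0$. The only cosmetic differences are that the paper uses a defining function $g$ with $\mathcal{M}=g^{-1}(0)$ (rather than closedness plus a.e.\ convergence) to get $\tmmathbf{u}_0\in\mathcal{M}$, and that it eliminates the $y$-independent remainder $\alpha(x)=\tmmathbf{n}(\tmmathbf{u}_0)\cdot\tmmathbf{u}_1$ by invoking the null-average normalisation of $\tmmathbf{u}_1$ rather than by subtracting $\alpha(x)\,\tmmathbf{n}(\tmmathbf{u}_0(x))$ as you do.
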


\begin{remark}
  Here, as already observed in Remark \ref{rm:db1}, we have to assume strong
  two-scale convergence since the boundedness of the family
  $(\tmmathbf{u}_{\varepsilon})_{\varepsilon \in \RR^+}$ in $L^2_{\sharp}
  [ \YY, H^1 (\Omega) ]^N$ does not imply strong convergence in
  $L^2_{\sharp} [ \YY, L^2 (\Omega) ]^N$ of a suitable subsequence,
  which is an essential requirement in order to prove that the limit function
  $\tmmathbf{u}_0$ takes values on $\mathcal{M}$.
\end{remark}

\begin{proof}
  Since $\tmmathbf{u}_{\varepsilon} \twoheadrightarrow \tmmathbf{u}_0$ in
  $L^2_{\sharp} [ \YY, L^2 (\Omega) ]^N$ the first part of the
  theorem (namely (\ref{eq:statementalreadyknown})) is nothing else that
  Proposition \ref{prop:g1classicalctwoscaleL2H1}. It remains to prove the
  second part. To this end let us recall (cfr. {\cite{do1976differential}})
  that since $\mathcal{M}$ is a regular closed orientable surface there exists
  an open tubular neighbourhood $U \subseteq \RR^N$ of $\mathcal{M}$ and a
  function $g : U \rightarrow \RR$ which has zero as a regular value and is
  such that $\mathcal{M}= g^{- 1} (0)$. Since $\tmmathbf{u}_{\varepsilon}
  \twoheadrightarrow \tmmathbf{u}_0$ strongly in $L^2_{\sharp} [ \YY, L^2
  (\Omega) ]^N$ we have $0 = g (\tmmathbf{u}_{\varepsilon} (x, y))
  \twoheadrightarrow g (\tmmathbf{u}_0 (x))$ strongly in $L^2_{\sharp} [
  \YY, L^2 (\Omega) ]^N$ and therefore $g (\tmmathbf{u}_0 (x)) = 0$ a.e.
  in $\Omega$. Next we observe that for any $\varepsilon \in \RR^+$ we have $g
  (\tmmathbf{u}_{\varepsilon}) = 0$ and hence $\gradx
  \tmmathbf{u}_{\varepsilon} (x, y) .\tmmathbf{n} (\tmmathbf{u}_{\varepsilon}
  (x, y)) =\tmmathbf{0}$ for a.e. $(x, y) \in \Omega \times \YY$. Passing to
  the two-scale limit we so get
  \begin{eqnarray}
    0 \eqs \int_{\Omega \times \YY} \mathcal{F}_{\varepsilon} \left( [
    \gradx \tmmathbf{u}_{\varepsilon} ] \tmmathbf{n}
    (\tmmathbf{u}_{\varepsilon}) \right) (x, y) \cdot \tmmathbf{\psi} (x, y)
    \mathd x \mathd y \qquad \qquad \qquad \qquad \qquad \qquad \qquad &  & 
    \nonumber\\
    \xrightarrow{\varepsilon \rightarrow 0} \int_{\Omega \times \YY} [
    \gradx \tmmathbf{u}_0 (x) + \grady \tmmathbf{u}_1 (x, y) ]
    \tmmathbf{n} (\tmmathbf{u}_0 (x)) \cdot \tmmathbf{\psi} (x, y) \mathd x
    \mathd y \eqs 0 &  &  \label{eq:twoscaleonmanifold}
  \end{eqnarray}
  for every $\tmmathbf{\psi} \in L^{\infty}_{\sharp} [ \YY, W (\Omega)
  ]^N$. In particular, by taking $\tmmathbf{\psi} (x, y) \assign
  \tmmathbf{\varphi} (x) \otimes 1 (y)$, since $\left\langle \grady
  \tmmathbf{u}_1 (x, y) \right\rangle_{\YY} = 0$ we have $\gradx
  \tmmathbf{u}_0 (x) \tmmathbf{n} (\tmmathbf{u}_0 (x)) =\tmmathbf{0}$ a.e. in
  $\Omega$. Thus from (\ref{eq:twoscaleonmanifold}) we get
  \[ \int_{\Omega \times \YY} \grady (\tmmathbf{u}_1 (x, y) \cdot \tmmathbf{n}
     (\tmmathbf{u}_0 (x))) \cdot \tmmathbf{\psi} (x, y) \mathd x \mathd y \eqs
     0 \quad \forall \tmmathbf{\psi} \in L^{\infty}_{\sharp} [Y, W_0
     (\Omega)]^N \]
  and hence for some $c \in \RR$ we have $\tmmathbf{u}_1 \cdot \tmmathbf{n}
  (\tmmathbf{u}_0) = c$ a.e. in $\Omega \times \YY$. But since
  $\tmmathbf{u}_1$ is null average on $Y$, so is $\tmmathbf{u}_1 \cdot
  \tmmathbf{n} (\tmmathbf{u}_0)$ and therefore necessarily $c = 0$.
\end{proof}

\begin{proof}[of Theorem \ref{thm:homogenizationhmaps}]
  For any $\tmmathbf{\psi}_{\varepsilon} \in L^{\infty}_{\sharp} [ \YY,
  W_0 (\Omega) ]^3$ we set $\tmmathbf{\eta}_{\varepsilon} \assign
  \tmmathbf{u}_{\varepsilon} \times \tmmathbf{\psi}_{\varepsilon}$ in equation
  (\ref{eq:statharmmap}). We then have $\gradx \tmmathbf{u}_{\varepsilon} 
  \gradx \tmmathbf{\eta}_{\varepsilon} = \sum_{i \in \NN_3} \partial_{x_i}
  \tmmathbf{\psi}_{\varepsilon} \cdot (\partial_{x_i}
  \tmmathbf{u}_{\varepsilon} \times \tmmathbf{u}_{\varepsilon})$ and therefore
  \begin{equation}
    \sum_{i \in \NN_3} \int_{\Omega \times \YY} a (x, y) [
    \mathcal{F}_{\varepsilon} (\tmmathbf{u}_{\varepsilon}) \times
    \mathcal{F}_{\varepsilon} \left( \frac{\partial
    \tmmathbf{u}_{\varepsilon}}{\partial x_i} \right) ] \cdot
    \frac{\partial \tmmathbf{\psi}_{\varepsilon}}{\partial x_i} \mathd x
    \mathd y = 0 \quad \forall \tmmathbf{\psi}_{\varepsilon} \in
    L^{\infty}_{\sharp} [ \YY, W_0 (\Omega) ]^3
    \label{eq:tobetwoscaled}
  \end{equation}
  By mimicking the proof of Proposition \ref{prop:attainedtest}, it is simple
  to get that for every $\tmmathbf{\eta} \in W_0 (\Omega)^3$ there exists a
  family $(\tmmathbf{\psi}_{\varepsilon})_{\varepsilon \in \RR^+}$ of
  $L^{\infty}_{\sharp} [ \YY, W_0 (\Omega) ]^3$ functions such that
  $\tmmathbf{\psi}_{\varepsilon} \twoheadrightarrow \tmmathbf{\eta}$ and
  $\gradx \tmmathbf{\psi}_{\varepsilon} \twoheadrightarrow \gradx
  \tmmathbf{\eta}$ strongly in $L^2_{\sharp} [ \YY, L^2 (\Omega)
  ]^3$, so that taking into account Proposition
  \ref{prop:g1classicalctwoscaleL2H1}, passing to the two-scale limit in
  (\ref{eq:tobetwoscaled}) we get
  \begin{equation}
    \sum_{i \in \NN_3} \int_{\Omega \times \YY} a (x, y) [ \tmmathbf{u}_0
    (x) \times \left( \frac{\partial \tmmathbf{u}_0}{\partial x_i} (x) +
    \frac{\partial \tmmathbf{u}_1}{\partial y_i} (x, y) \right) ] \cdot
    \frac{\partial \tmmathbf{\eta}}{\partial x_i} (x) \mathd x \mathd y \eqs 0
    \quad \forall \tmmathbf{\eta} \in W_0 (\Omega)^3 . \label{eq:quasihomohm}
  \end{equation}
  On the other hand, again by by mimicking the proof of Proposition
  \ref{prop:attainedtest}, we get that for every $\tmmathbf{\psi}_1 \in
  L^{\infty}_{\sharp} [Y, W_0 (\Omega)]^3$ there exists a family
  $(\tmmathbf{\psi}_{\varepsilon})_{\varepsilon \in \RR^+}$ of
  $L^{\infty}_{\sharp} [Y, W_0 (\Omega)]^3$ functions such that $\varepsilon
  \gradx \tmmathbf{\psi}_{\varepsilon} \twoheadrightarrow \grady
  \tmmathbf{\psi}_1$ strongly in $L^2_{\sharp} [ \YY, L^2 (\Omega)
  ]^3$. Hence, from Proposition \ref{prop:g1classicalctwoscaleL2H1},
  passing to the two-scale limit in (\ref{eq:tobetwoscaled}) we get
  \begin{equation}
    \sum_{i \in \NN_3} \int_{\Omega \times \YY} [ \tmmathbf{u}_0 (x)
    \times a (x, y) \left( \frac{\partial \tmmathbf{u}_0}{\partial x_i} (x) +
    \frac{\partial \tmmathbf{u}_1}{\partial y_i} (x, y) \right) ] \cdot
    \frac{\partial \tmmathbf{\psi}_1}{\partial y_i} (x, y) \mathd x \eqs 0
  \end{equation}
  for every $\tmmathbf{\psi}_1 \in L^{\infty}_{\sharp} [Y, W_0 (\Omega)]^3$.
  In particular, for any $\tmmathbf{\psi} \in L^{\infty}_{\sharp} [Y, W_0
  (\Omega)]^3$, by setting $\tmmathbf{\psi}_1 (x, y) \assign \tmmathbf{u}_0
  (x) \times \tmmathbf{\psi} (x, y)$ and taking into account that due to Lemma
  \ref{Lemma:2scalemanifold} $\tmmathbf{u}_1 (x, y) \cdot \tmmathbf{u}_0 (x) =
  0$ a.e. in $\Omega \times \YY$ we finish with the classical cell equation
  \begin{equation}
    \sum_{i \in \NN_3} \int_{\Omega \times \YY} a (x, y) \left( \frac{\partial
    \tmmathbf{u}_0}{\partial x_i} (x) + \frac{\partial
    \tmmathbf{u}_1}{\partial y_i} (x, y) \right) \cdot \frac{\partial
    \tmmathbf{\psi}}{\partial y_1} (x, y) \mathd x \eqs 0 \quad \forall
    \tmmathbf{\psi} \in L^{\infty}_{\sharp} [Y, W_0 (\Omega)]^3 .
    \label{eq:cellequationshm}
  \end{equation}
  The solution of the previous equation is classical. Indeed, due to
  Lax-Milgram lemma, the cell problem (\ref{eq:cellequationshm}), which in
  distributional form reads as
  \begin{equation}
    - \mathbf{div}_y \left( a (x, y) \grady \tmmathbf{u}_1 (x, y) \right) \eqs
    \mathbf{div}_y  (a (x, y) \nabla \tmmathbf{u}_0 (x)),
    \label{eq:cellproblemvarformharmmapdistr}
  \end{equation}
  has a unique null average solution in $L^2 [ \Omega, H^1_{\sharp}
  \left( \YY \right) ]^3$. Moreover, if for every $i \in \NN_3$ we
  denote by $\chi_{i_{}}$ the unique null average solution in $L^2 [
  \Omega, H^1_{\sharp} \left( \YY \right) ]$ of the scalar cell problem
  (\ref{eq:cellharmonic1}), by the defining the vector valued function
  $\tmmathbf{\chi} \assign (\chi_1, \chi_2, \chi_3) \in L^2 [ \Omega,
  H^1_{\sharp} \left( \YY \right) ]^3$ we get that the vector field
  \begin{equation}
    \tmmathbf{u}_1 (x, y) \assign \sum_{j \in \NN_3} \left( \tmmathbf{\chi}
    (x, y) \cdot \gradx u_0^j (x) \right) e_j  \label{eq:expru1}
  \end{equation}
  is the unique null average solution in $L^2 [ \Omega, H^1_{\sharp}
  \left( \YY \right) ]^3$ of the cell problem
  (\ref{eq:cellproblemvarformharmmapdistr}). Next we note that from
  (\ref{eq:expru1}) we get $\nabla \tmmathbf{u}_0 (x) + \grady \tmmathbf{u}_1
  (x, y) = \left( I + \grady \tmmathbf{\chi} (x, y) \right) \gradx
  \tmmathbf{u}_0 (x)$ and hence, evaluating (\ref{eq:quasihomohm}) on vector
  fields of the form $\tmmathbf{\eta} (x) \assign \tmmathbf{u}_0 (x) \times
  \tmmathbf{\varphi} (x)$ with $\tmmathbf{\varphi} \in W_0 (\Omega)^3$ and
  $\tmmathbf{\varphi} (x) \in T_{\tmmathbf{u}_0 (x)} \SStwo$ we finish with
  (\ref{eq:homogenizedharmonicmapeq}).
\end{proof}

\begin{remark}
  In general, if we do not assume any positivity condition on the coefficient
  $a$, it is not possible to reduce the domain equation (\ref{eq:quasihomohm})
  and the cell equation (\ref{eq:cellequationshm}) to a single homogenized
  equation (like the one obtained in Theorem \ref{thm:homogenizationhmaps}).
  Nevertheless the two-scale limit $\tmmathbf{u}_0$ will be a solution of the
  system of two distributional equations
  \begin{eqnarray}
    \mathbf{div}_x \left( \tmmathbf{u}_0 (x) \times \int_{\YY} a (x, y) \left(
    \gradx \tmmathbf{u}_0 (x) + \grady \tmmathbf{u}_1 (x, y) \right) \mathd y
    \right) & = & \tmmathbf{0} \quad \text{in } \mathcal{D}' (\Omega) \\
    \mathbf{div}_y  \bigl( a(x, y) \bigl( \nabla \tmmathbf{u}_0 (x) +
    \grady \tmmathbf{u}_1(x, y) \bigr) \bigr) &
    = & \tmmathbf{0} \quad \text{in } \mathcal{D}' \left( \Omega \times \YY
    \right) . 
  \end{eqnarray}
\end{remark}

\section{Conclusion and Acknowledgment}

This work was partially supported by the labex LMH through the grant no.
ANR-11-LABX-0056-LMH in the {\tmem{Programme des Investissements d'Avenir}}.

\end{document}